\documentclass[11pt,reqno, a4paper]{amsart}

\usepackage{marginnote}

\usepackage[utf8]{inputenc}
\usepackage{amsmath,textcomp,  microtype, lmodern, mathrsfs,csquotes,fbb}
\usepackage{todonotes}
\usepackage{amsmath}
\usepackage{amsfonts}
\usepackage{mathrsfs} 
\usepackage{dsfont}
\usepackage{amssymb}
\usepackage{amsthm}
\usepackage{hyperref}
\usepackage{graphicx}
\usepackage[noadjust]{cite}
\usepackage{caption}
\usepackage{subfig}
\usepackage{esint}
\usepackage{dutchcal}
\usepackage{bbm}
\usepackage{tikz}
\usetikzlibrary{decorations.pathreplacing}
\usepackage[T1]{fontenc}
\usepackage{mathtools}
\usepackage{pdfpages}
\usepackage{hyphenat}

\usepackage[top=2.92cm, bottom=2.92cm, left=2.5cm, right=2.5cm, headsep=0.2in]{geometry}
\usepackage{fancyhdr}

\usepackage{fancyhdr}

\DeclareMathOperator{\Vol}{Vol}

\theoremstyle{plain}
\newtheorem{theorem}{Theorem}[section]

\newtheorem{Lemma}[theorem]{Lemma}

\newtheorem{remark}[theorem]{Remark}
\newtheorem{note}[theorem]{Notation}

\newtheorem{proposition}[theorem]{Proposition}

\newcommand{\RR} {\mathbb R}

\newcommand{\CC} {\mathbb C}
\newcommand{\DD} {\mathbb D}

\newcommand{\ZZ} {\mathbb Z}
\newcommand{\NN} {\mathbb N}
\newcommand{\HH} {\mathbb H}

\newcommand{\DDD}{\mathcal{D}}
\newcommand{\cD}{\mathcal{D}}

\newcommand{\pa} {\partial}

\newcommand{\beq} {\begin{equation}}
\newcommand{\eeq} {\end{equation}}

\newcommand{\tnorm}[1]{\left|\!\left|\!\left| #1 \right|\!\right|\!\right|}

\newcommand{\supp} {\operatorname{supp}}

\newcommand{\InjRad}{\operatorname{InjRad}}

\renewcommand{\Re} {\operatorname{Re}}
\newcommand{\Op} {\operatorname{Op}}

\numberwithin{equation}{section}

\newcommand{\SSa}[1]{{\color{orange} {\bf (SS: #1)}}}

\begin{document}
\title[Ergodicity on large surfaces]{
Quantum Ergodicity on large hyperbolic surfaces for local and pseudolocal operators
}

\author{Nalini Anantharaman}
\address{Coll\`{e}ge de France, 11 place Marcelin Berthelot, 75005 Paris / IRMA, 7 rue Ren\'{e} Descartes, 67084 Strasbourg, France}
\email{nalini.anantharaman@college-de-france.fr}
\author{Soumyajit Saha}
\address{Institut de Recherche Math\'{e}matique Avanc\'{e}e (IRMA), 7 rue Ren\'{e} Descartes, 67084 Strasbourg, France}
\email{soumyajit.saha@unistra.fr}

\begin{abstract}
We prove a quantum ergodicity theorem for sequences of closed hyperbolic surfaces converging to the Poincar\'e disc in the Benjamini–Schramm sense. Assuming a uniform lower bound on injectivity radius and spectral gap, we establish vanishing of quantum variance on fixed spectral windows for a class of observables that contains differential operators and finite-propagation smooth operators. This generalises a result of Le Masson and Sahlsten from scalar observables to both local and ``pseudolocal'' operator settings.
\end{abstract}

\maketitle

\tableofcontents
\allowdisplaybreaks

\section{Introduction}

 Let $(X, g)$ be any compact Riemannian surface.  Consider the eigenequation
\begin{equation*}\label{eqtn: Eigenfunction equation}
    -\Delta \psi = \nu \psi, %\hspace{2mm} i=1, 2, \cdots,
\end{equation*}
where $\Delta$ is the Laplace-Beltrami operator. %given by (using the Einstein summation convention) 
%$$
%\Delta f = \frac{1}{\sqrt{|g|}}\pa_i \left( \sqrt{|g|}g^{ij}\pa_j f\right),
%$$
%where $|g|$ is the determinant of the metric tensor %$g_{ij}$. 
Recall that  
the Laplacian $-\Delta$ has a discrete spectrum 
$$ 
0\leq\nu_1 \leq \dots \leq \nu_k \leq \dots \nearrow \infty,
$$
repeated with multiplicity with corresponding eigenfunctions $\psi_k$, which form a complete orthonormal basis of $L^2(X)$. We are interested in studying the ``ergodicity'' of eigenfunctions on the large-scale geometric limit for eigenvalues that stay bounded in a fixed interval, a point of view introduced by \cite{AnantharamanLeMasson2015} for large regular graphs and by \cite{MassonSahlsten17} for large hyperbolic surfaces. Before introducing the general notion, we begin by observing the following ``toy'' case. 

Consider the Dirichlet Laplacian on a fixed interval $[0,L]$. The corresponding eigenvalues and normalised eigenfunctions are :
\[
\nu_{k,L}= \left(\frac{k\pi}{L}\right)^2,\hspace{20pt}\psi_{k,L}(x)=\sqrt{\frac{2}{L}}\sin{ \left(\frac{k\pi x}{L}\right)}; \hspace{15pt} k\geq 1.
\]
It is easy to check that for any $a(x)\in C^1([0,L])$,
\[\int_0^L a(x)|\psi_{k,L}|^2dx=\frac{1}{L}\int_0^La(x)dx-\frac{1}{L}\int_0^La(x) \cos{ \left(\frac{2k\pi x}{L}\right)},\]
and
\[
\frac{1}{L}\int_0^La(x) \cos{ \left(\frac{2k\pi x}{L}\right)}\to 0 ~~~~~\text{ as }~~~~~ k\to \infty
\]
if $a$ is continuous. The limit $k\to \infty$ corresponds to the high-frequency (or small wavelength) limit. So, the sequence of probability measures $|\psi_{k,L}|^2dx$ converges weakly to the uniform measure $\frac{1}{L}dx$. In other words, the eigenfunctions equidistribute in high-energy limits in the position space.

\subsection{Classical Quantum Ergodicity theorem} Let us recall the Quantum Ergodicity theorem in its original form, where the frequency $\nu$ goes to infinity.
\begin{theorem}[Quantum Ergodicity Theorem, \cite{Snilerman74, CDV85, Zelditch87}] \label{t:QE}
    Let $X$ be a compact Riemannian manifold whose metric is normalised so that $\Vol(X)=1$, and let $(\nu_j, \psi_j)$ be its orthonormal eigendata corresponding to the Laplace-Beltrami operator $-\Delta_X$. Assume that the geodesic flow on $X$ is ergodic with respect to the Liouville measure. Then for a continuous function $a$ on $X$, we have 
    \[
    \frac{1}{N(\nu)}\sum_{j: \nu_j\leq \nu}\left| \langle \psi_j, a\psi_j \rangle_{L^2(X)}-\int a(x)d\Vol(x)\right|\to 0 \qquad \text{as }~~ \nu\to \infty,
    \]
    where $N(\nu)=\#\{j: \nu_j\leq \nu\}$. 
\end{theorem}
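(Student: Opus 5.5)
We follow the standard Shnirelman--Colin de Verdière--Zelditch strategy, phrased with semiclassical pseudodifferential calculus. Set $h=\nu^{-1/2}$ and subtract the mean, so we may assume $\int a\,d\Vol=0$. By density of smooth functions in $C(X)$ (the average in the statement is bounded by $2\|a\|_\infty$, uniformly in $\nu$), it suffices to treat $a\in C^\infty(X)$. We view $a$ as a symbol on $T^*X$ and $\Op_h(a)$ as the operator of multiplication by $a$. By Cauchy--Schwarz, it is enough to show that the quantum variance
\[
V(\nu)=\frac{1}{N(\nu)}\sum_{\nu_j\le\nu}\big|\langle\psi_j,a\psi_j\rangle\big|^2
\]
tends to $0$.

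\textbf{Step 1 (time averaging).} Since $\psi_j$ is an eigenfunction of $e^{it\sqrt{-\Delta}}$, we have $\langle\psi_j,a\psi_j\rangle=\langle\psi_j,A_T\psi_j\rangle$, where
\[
A_T=\frac1T\int_0^T e^{-it\sqrt{-\Delta}}\,a\,e^{it\sqrt{-\Delta}}\,dt .
\]
By Egorov's theorem for the half-wave group, $A_T$ is a pseudodifferential operator of order $0$ with principal symbol
\[
a_T(x,\xi)=\frac1T\int_0^T a\big(\pi\circ G^t(x,\xi/|\xi|)\big)\,dt ,
\]
the time average of $a$ along the geodesic flow $G^t$ on the unit sphere bundle $S^*X$. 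The error terms are of order $-1$, with constants depending on $T$ but fixed once $T$ is fixed.

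\textbf{Step 2 (variance bound).} By Cauchy--Schwarz applied to each eigenfunction,
\[
|\langle\psi_j,A_T\psi_j\rangle|^2\le\langle\psi_j,A_T^*A_T\psi_j\rangle .
\]
Summing over $j$ gives
\[
V(\nu)\le\frac{1}{N(\nu)}\operatorname{Tr}\big(\Pi_\nu A_T^*A_T\big),
\]
where $\Pi_\nu$ is the spectral projector onto $\{\nu_j\le\nu\}$. The local Weyl law (Hörmander's sharp trace asymptotics for zeroth-order operators) then yields
\[
\lim_{\nu\to\infty}\frac{1}{N(\nu)}\operatorname{Tr}\big(\Pi_\nu B\big)=\int_{S^*X}\sigma_B\,d\mu_L
\]
for $B$ pseudodifferential of order $0$, with $\mu_L$ the normalised Liouville measure. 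Applying this to $B=A_T^*A_T$, whose principal symbol is $|a_T|^2$, gives
\[
\limsup_{\nu\to\infty}V(\nu)\le\int_{S^*X}|a_T|^2\,d\mu_L .
\]

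\textbf{Step 3 (ergodicity).} The right-hand side does not depend on $\nu$, so we let $T\to\infty$. By von Neumann's mean ergodic theorem and the ergodicity of $G^t$ with respect to $\mu_L$, $a_T\to\int a\,d\mu_L=\int a\,d\Vol=0$ in $L^2(\mu_L)$. Hence $\limsup V(\nu)=0$.

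The main obstacle is the local Weyl law used in Step 2. One must show that $N(\nu)^{-1}\operatorname{Tr}(\Pi_\nu B)$ converges to the Liouville average of the symbol, which requires a parametrix for the wave group for short times. The plan is to use a Tauberian argument on $\operatorname{Tr}(B e^{it\sqrt{-\Delta}})$, whose singularity at $t=0$ is computed from a Lagrangian/FIO parametrix, exactly as in Hörmander's proof of the Weyl law. The remaining steps are then routine bookkeeping of $O_T(\nu^{-1/2})$ remainders.
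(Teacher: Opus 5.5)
The paper does not prove this statement itself: it is quoted as classical background from Shnirelman, Colin de Verdi\`ere and Zelditch. Your argument is exactly the standard proof from those references, and it is correct: density reduction to smooth mean-zero $a$, Cauchy--Schwarz to pass to the variance, Egorov time-averaging with the half-wave group, the local Weyl law applied to $A_T^*A_T$, and von Neumann's mean ergodic theorem. The local Weyl law, which you flag as the main obstacle, is a standard result you can cite. Since $A_T^*A_T\ge 0$, even the leading-order version obtained from small-time heat-trace asymptotics and Karamata's Tauberian theorem would suffice.

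It is worth seeing how the paper's proof of its own large-scale analogue, Theorem~\ref{thm: main_theorem}, follows your skeleton with each ingredient replaced.

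The unitary wave group becomes the radial averaging operators $P_t$. These act on eigenfunctions by real multipliers $h_t(\lambda_j)$ rather than by phases. So averaging $P_tAP_t$ returns $\langle\psi_j,A\psi_j\rangle$ only up to the factor $\frac1T\int_0^T h_t(\lambda_j)^2\,dt$, and the lower bound for this factor on the spectral window is Proposition~\ref{prop: MS Prop 2}.

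Egorov plus the local Weyl law become the cruder bound $\sum_j|\langle\psi_j,B\psi_j\rangle|^2\le\|B\|_{\mathrm{HS}}^2$. This Hilbert--Schmidt norm is computed through the Plancherel formula for the symbol and the injectivity-radius lemmas, and it is combined with a Benjamini--Schramm Weyl law for $N(X_p,J)$.

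The rate-free mean ergodic theorem becomes Nevo's ergodic theorem. It gives an $O(1/T)$ bound whose constant depends on the spectral gap and on angular derivatives of the symbol.

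That quantitative, uniform-in-$p$ input is unavoidable there, because the dynamical system changes with $p$ and there is no semiclassical parameter making Egorov-type errors negligible. In your fixed-manifold, high-frequency setting none of this is needed, which is why a qualitative ergodic theorem is enough.
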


\begin{remark}
   The above averaging phenomenon implies that there is a density 1 subsequence\footnote{$S\subset \NN$ is of density 1 if $\displaystyle \lim_{j\to \infty}\frac{\#\{S\cap[0,j]\}}{j}=1$.} $S\subset \NN$ such that 
   \[
    \langle \psi_j, a\psi_j \rangle= \int a(x)|\psi_j(x)|^2d\Vol(x)\to \int a(x)d\Vol(x), \qquad \text{as }~~~ j\to \infty, j\in S.
   \]
   In other words, the sequence of measures $\{|\psi_j(x)|^2d\Vol(x)\}_{j\in S}$ converges weakly to the uniform measure $d\Vol(x)$.
\end{remark}
\begin{remark}The full statement of the above QE theorem is more general~: it says that the multiplication operator $a$ in the inner product can be replaced with a pseudo-differential operator $A$ of order zero and the $a$ in the integral with the principal symbol $\sigma_A$ and the normalised Liouville measure $d\tilde{L}$. More precisely, 
\[
\frac{1}{N(\nu)}\sum_{j: \nu_j\leq \nu}\left| \langle \psi_j, A\psi_j \rangle_{L^2(X)}-\int_{S^*X} \sigma_Ad\tilde{L}\right|\to 0 \qquad \text{as }~~ \nu\to \infty,
\]
where $S^*X$ denotes the unit cotangent bundle of $X$.

The toy case of the interval $[0, L]$ may be seen as a trivial case of a manifold with boundary, where everything can be computed explicitly. Note, however, that in the case of the circle $\RR/ L\ZZ$, the ergodicity assumption is not satisfied for the geodesic flow. It is easy to show that equidistribution holds for test functions $a(x)$ but not for general pseudodifferential operators. 
\end{remark}

\subsection{1-D model on large intervals} Now, we return to our example. Changing our perspective, instead of fixing the interval $[0, L]$ and letting $k\to \infty$, we allow the interval to grow. %A priori, we can think of two ways of doing it. 
%\begin{itemize}
%     \item ({\em Fixing $k$, letting $L\to \infty$}) Choosing any compactly supported $a$ with $\supp(a)\subset [0, R]$ and for $L\geq R$,
 %   \[\int_0^L a(x)|\psi_{k,L}|^2dx \leq \frac{2}{L}\|a\|_{L^1}\to 0 ~~~~~\text{ as }~~~~~ L\to \infty,\] 
  %   while 
   %% \frac{1}{L}\int_0^La(x)dx\to 0 ~~~~~\text{ as }~~~~~ L\to \infty.
     %\]
     %In this regime, the ``equidistribution'' becomes trivial since the convergence is due to the spreading of mass over a growing region. There is no
     To get a similar oscillatory averaging phenomenon that one sees during the regime of fixed interval and high-energy limit, one fixes a spectral window $\nu_k\in I$ and lets $L\to \infty$. Let $I=[a,b]$ and $N(L, I)$ denote the number of eigenvalues contained in $I$. Remark that $\nu_{k, L}\in I$ amounts
     to $(k\pi)^2\in [aL^2, bL^2]$, from which it can be shown that 
     \[
     N(L, I)\sim C_IL, ~~~~~\text{ where }~~~~ C_I \text{ depends on } I.
     \]
    Consider a sequence of uniformly bounded functions $a_L$ ($\|a_L\|_\infty\leq M$ for every $L$), and define the quantum variance 
    \[
    \frac{1}{N(L, I)}\sum_{\nu_{k, L}\in I} \left|  \langle \psi_{k,L}, a_L\,\psi_{k,L}\rangle -\frac{1}{L}\int_0^L a_Ldx  \right|^2= \frac{1}{N(L, I)}\frac{1}{L^2}\sum_{\nu_{k, L}\in I} \left| \int_0^L a_L (x) \cos{\left(\frac{2k\pi x}{L}\right)}dx \right|^2.
    \]
    Considering the Fourier coefficients $c_{k, L}=\int_0^La_L(x)e^{-i\frac{2k\pi x}{L}}dx$, Parseval's identity gives us
    \[\|a_L\|_2^2= \frac{1}{L}\sum_{m\in\ZZ}|c_{m, L}|^2.\]
    Then the variance term simplifies as follows:
    \begin{align*}
        \frac{1}{N(L, I)}\frac{1}{L^2}\sum_{\nu_{k, L}\in I} \left| \int_0^L a_L (x) \cos{\left(\frac{2k\pi x}{L}\right)}dx \right|^2&= \frac{1}{N(L, I)}\frac{1}{L^2}\sum_{\nu_{k, L}\in I} \left| \Re\left(\int_0^L a_L (x) e^{i\frac{2k\pi x}{L}}dx \right)\right|^2\\
        &\leq \frac{1}{N(L, I)}\frac{1}{L^2}\sum_{\nu_{k, L}\in I} \left|\int_0^L a_L (x) e^{i\frac{2k\pi x}{L}}dx \right|^2\\
        &= \frac{1}{N(L, I)}\frac{1}{L^2} \sum_{\nu_{k, L}\in I} |c_{-k, L}|^2 \\
        &\leq \frac{1}{N(L, I)}\frac{1}{L^2} \sum_{m\in \ZZ} |c_{m, L}|^2= \frac{1}{N(L,I)}\frac{1}{L^2} L\|a_L\|_2^2\\
        &\leq \frac{1}{N(L,I)}\frac{1}{L} M^2L=  \frac{M^2}{N(L,I)}\to 0 \text{ as } L\to \infty.
    \end{align*}
  
%\end{itemize}

Studying equidistribution in the large-volume limit with a fixed spectral window is the natural analogue of the high-energy limit on a fixed domain. In fact, for intervals (or more generally, higher-dimensional cubes or tori), the two limits may be seen to be equivalent by writing
$$\int_0^L a_L (x) \frac1L\cos{\left(\frac{2k\pi x}{L}\right)}dx=
\int_0^1 a_L (Ly) \cos{\left(2k\pi y\right)}dy
$$
where $\nu_{k, L} \in [a, b]$ implies that $k$ goes to infinity. The reason for a different treatment of the two limits is that the function $y\mapsto a_L (Ly)$ is now $L$-dependent, and we did not want to assume uniform regularity of this sequence of functions.

For more general manifolds, anyway, the two settings are no longer equivalent~: the study of eigenvalues in a fixed spectral window on a sequence of expanding manifolds is different from the high-energy limit at fixed geometry. 

\subsection{Main result} A typical case of application of Theorem \ref{t:QE} is the case
where $X$ is a compact boundaryless hyperbolic surface, where the geodesic flow is known to be ergodic. In this article, our main focus is also on hyperbolic surfaces, but we propose
to consider the large-scale geometric limit instead of the high-frequency limit. That is, we consider sequences of hyperbolic surfaces converging to the hyperbolic disc under the notion of Benjamini-Schramm convergence. Although originally introduced for graphs in \cite{BS}, it can be extended to the manifold setting \cite{Abert-et-al}. We use the following special case of the definition for our purposes. We say that a sequence of compact hyperbolic surfaces $X_p = \Gamma_p\textbackslash\DD$ \emph{Benjanimi-Schramm converges} to the hyperbolic plane $\DD$ if for any $R>0$, 
\begin{equation}
    \lim_{p\to \infty} \frac{\Vol(\{z\in X_p: \InjRad_{X_p}(z)<R\})}{\Vol(X_p)}=0.
\end{equation}
In other words, given fixed $R>0$, one picks a point $z$ at random on the surface $X_p$ using the normalised measure $\frac{1}{\mu_{g_p}(X_p)}\mu_{g_p}$. The above equation says that the probability for the ball of centre $z$ and radius $R$ to be isometric to a ball in the hyperbolic disc goes to 1 as $p\to \infty$. The \emph{injectivity radius} $\InjRad_{X_p}(z)$ gives the largest $R>0$ such that the ball $B_X(z, R)$ is isometric to a ball of radius $R$ in the hyperbolic disc. We denote $\InjRad(X):= \inf_{z\in X} \InjRad_X(z)$ the minimal injectivity radius. For instance, if $\InjRad(X_p)\to \infty$, then the above convergence is automatically satisfied.

Le Masson and Sahlsten in \cite{MassonSahlsten17} show a similar position-space equidistribution as the 1-$D$ example demonstrated above, but on large hyperbolic surfaces. For any fixed spectral interval and given sequence of uniformly bounded test functions $(a_p)$, most of the eigenfunctions of the Laplacian evaluated on $a_p$ approach the uniform measure. More precisely, on a fixed interval $I\subset (1/4, \infty)$ and given sequence of hyperbolic surfaces $\{X_p\}$ (satisfying certain geometric restrictions) that converge to $\DD$ in the sense of Benjamini and Schramm, 
\[
 \frac{1}{N(X_p, J)} \sum_{\nu_j^{(n)}\in J} \left|\langle\psi_j^{(p)}, a_p\psi_j^{(p)}\rangle-\fint a _pd\Vol_{X_p} \right|^2\to 0
\]
when $p\to 0$, for any uniformly bounded sequence of measurable functions $\{a_p\}$. Here $N(X_p, J)$ is the number of eigenvalues in the interval $J$ counted with multiplicity.

Our aim is to extend this framework from scalar to more general operators. More precisely, instead of testing eigenfunctions against multiplication operators, we allow differential operators and, more generally, operators with kernel supported near the diagonal (which we see as analogous to the pseudo-differential operators in the standard Quantum Ergodicity theorem). 
%For this generality, one must impose additional assumptions on the symbol. More precisely, we assume a vanishing condition in the angular $K$-variable together with a suitable control of its second angular derivative. With these modifications, 
Theorem \ref{thm: main_theorem} recovers the equidistribution statement in \cite{MassonSahlsten17} but also applies to a much broader class of operators (or ``observables'' in the language of quantum mechanics). These include local differential operators and more general ``pseudo-differential'' operators.
In forthcoming work, we expect to be able to use this more general theorem to investigate fine properties of eigenfunctions
on large hyperbolic surfaces.

\subsubsection*{\underline{Framework}:}  Consider a sequence of closed hyperbolic surfaces $\{X_p\}_{p\in \NN}$ with orthonormal eigendata $(\nu_{j,p}, \psi_{j,p})_{j=0}^{+\infty}$ corresponding to the Laplacian $-\Delta_{X_p}$. Consider the following assumptions on $\{X_p\}$:
    \begin{enumerate}
    \item $X_p:=\Gamma_p\backslash\DD$ converges in the sense of Benjamini-Schramm to $\DD$.
    \item The injectivity radii are uniformly bounded below by $l_{\min}$, i.e., $\InjRad(X_p)\geq l_{\min}$ for all $p\in \NN$ for some $l_{\min}>0$.
    \item There is a uniform lower bound of the spectral gap of the Laplacian on $X_p$, i.e. there exists some $\eta$ such that $\nu_{1,p}\geq \eta$ for all $p$. 
\end{enumerate}

\begin{theorem}\label{thm: main_theorem}
Consider a fixed interval $J\subset (1/4, \infty)$, and some fixed $S\in \RR$,  $k\in \NN\cup \{0\}$. Let $\{A_p\}$ be a sequence of differential operators, or operators on $L^2(X_p)$ whose distributional kernel $K_{A_p}\in \cD'(X_{p}\times X_{p}) $ satisfies the finite propagation condition: $\supp K_{A_p}\subset \{(z, w),d(z, w)\leq S\} $
with
 a uniform bound
\begin{equation}\label{ineq: local assumption}
     |A_p u (x)| \leq C || u||_{C^k(B(x, S))}
\end{equation}
for all $u$, all $x\in X_p$, and $S, C, k$ independent of $p$.

Then we have
\[
\frac{1}{N(X_p, J)} \sum_{\nu_{j,p}\in J}\left|\langle \psi_{j, p},  A_p\psi_{j,p} \rangle-\frac{1}{\Vol(X_p)}\int_{X_p\times X_p}
K_{A_p}(x, y)\varphi_{\lambda_{j,p}}(d(x, y))\, d\mu(x) d\mu(y)\right|^2 \to 0 \text{ as }~~ p\to \infty,
\]
where $\nu_{j,p}= \frac{1}{4}+ \lambda_{j,p}^2$, $\varphi_\lambda$ is the hyperbolic spherical function of parameter $\lambda$, and
and $N(X_p, J)$ is the number of eigenvalues in $J$ counted with multiplicity.
\end{theorem}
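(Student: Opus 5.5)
We follow the strategy of Le Masson and Sahlsten \cite{MassonSahlsten17}, and before that Anantharaman--Le Masson for graphs. The idea is to replace $A_p$ by a wave-propagated average, and then control the Hilbert--Schmidt norm of that average using Benjamini--Schramm convergence. The first step is to reduce to observables with zero mean. Write $\langle \psi_j, A\psi_j\rangle$ using the kernel, and lift to $\DD$ via a fundamental domain. Since $\psi_j$ is a Laplace eigenfunction of spectral parameter $\lambda_j$, averaging $\psi_j(y)$ over the circle of radius $r=d(x,y)$ around $x$ gives $\varphi_{\lambda_j}(r)\psi_j(x)$. We then use Iwasawa/polar coordinates $y=x\cdot k_\theta a_r$ on $\DD$, with the angle $\theta$ an element of $K$, and decompose $K_{A}(x,\cdot)$ in the angular variable. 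The radial (K-invariant) part contributes exactly the term
\[\int K_A(x,y)\varphi_{\lambda_j}(d(x,y))\,d\mu(y)\]
multiplied by $|\psi_j(x)|^2$. To handle this multiplication part we use the scalar theorem of \cite{MassonSahlsten17}, applied to the bounded function
\[a_p(x)=\int K_{A_p}(x,y)\varphi_\lambda(d(x,y))\,d\mu(y).\]
The dependence on $\lambda$ is smooth and $J$ is compact, so we can subdivide $J$ into small windows on which $\lambda$ is essentially frozen. The $C^k$ bound \eqref{ineq: local assumption} shows that $a_p$ is uniformly bounded, since $\varphi_\lambda$ and its derivatives are bounded on $[0,S]$.

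What remains is the non-radial part $A_p^\perp$, i.e. the operator with the radial component of the kernel removed in each fiber. Note that this removal is not canonical on the surface itself, but it is canonical on the lift to $\DD$. Here we mimic the proof of the classical QE theorem. For a suitable propagator $P_t$, either the discrete-time averaging operator of \cite{MassonSahlsten17} or the frame-flow conjugation (working on $\Gamma_p\backslash G$, where $G=PSL_2(\RR)$), we have the invariance $\langle \psi_j, A\psi_j\rangle = \langle \psi_j, \frac1T\sum_{t} P_t^* A P_t\psi_j\rangle$. Applying Cauchy--Schwarz and then summing over the whole window bounds the variance by
\[\frac{1}{N(X_p,J)}\,\big\|h(\sqrt{\Delta-1/4})\,\tfrac1T\textstyle\sum_t P_t^*A^\perp P_t\big\|_{HS}^2\]
for a smooth cutoff $h$ adapted to $J$. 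The next step is to write this Hilbert--Schmidt norm as a double integral over $X_p\times X_p$ of the kernel. We split the surface into the thick part, where $\InjRad>R(T)$, and the thin part. On the thick part the kernel equals its lift to $\DD$, and the computation reduces to a quantity on $\DD$, normalized by $\Vol(X_p)$. By the Weyl law for such surfaces we have $N(X_p,J)\asymp \Vol(X_p)$, so this normalized quantity is independent of $p$. On the thin part, the uniform injectivity radius bound $l_{\min}$ gives a crude bound on the kernel, growing at most exponentially in $T$, times the vanishing volume fraction. That term goes to $0$ as $p\to\infty$ for fixed $T$.

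The main obstacle is the estimate on $\DD$. We must show that the $\DD$-quantity tends to $0$ as $T\to\infty$ uniformly over the class of kernels. This is a mixing or decay-of-correlations statement for $A^\perp$: the spherical transform and spectral representation of the propagated kernel should decay like $1/T$ because of the oscillation of $\varphi_\lambda$. In \cite{MassonSahlsten17} this is done for multiplication operators via explicit Harish-Chandra estimates. For operators with an angular dependence I would first try expanding in $K$-types $e^{in\theta}$. Each nonzero $K$-type can be handled by spherical functions of type $n$. The $C^k$ bound would give summability in $n$, through integration by parts in $\theta$, which is where the derivative control of $A_p$ is used. The spectral gap $\eta$ is used, as in \cite{MassonSahlsten17}, to control the low part of the spectrum, and the small eigenvalues, in the Hilbert--Schmidt trace. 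Finally, the order of limits is $p\to\infty$ first and then $T\to\infty$.
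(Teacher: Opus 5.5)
Your skeleton matches the paper's: the average $\frac1T\int_0^T P_tAP_t\,dt$, a spectral cutoff, a Hilbert--Schmidt bound split into thick and thin parts via Benjamini--Schramm convergence and $l_{\min}$, the Weyl law, and $p\to\infty$ before $T\to\infty$. Your radial part is also correct. The radialised kernel has symbol $a_p(x,\lambda)=\int K_{A_p}(x,y)\varphi_\lambda(d(x,y))\,d\mu(y)$, i.e.\ the angular mean of the symbol, and it acts on $\psi_j$ as multiplication by $a_p(\cdot,\lambda_j)$. Freezing $\lambda$ on short subwindows (Lipschitz in $\lambda$ by the $C^k$ bound) then lets you quote \cite{MassonSahlsten17}, and this makes explicit a $\lambda$-dependence the paper passes over.

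The genuine gap is the non-radial part, which is the entire new content of the theorem. You only name its $T$-decay as ``the main obstacle'', and the framing is wrong: $\frac{1}{\Vol(X_p)}\int_{\cD_p}\int_{\DD}|K_{\overline{A}_T}(z,w)|^2\,d\mu(w)\,d\mu(z)$ is not a $p$-independent quantity on $\DD$. By Plancherel it is a $\lambda$-integral of $L^2(\Gamma_p\backslash G)$-norms of averages of $a_\lambda$ over growing subsets of $G$. Its decay is therefore a mean ergodic theorem on $\Gamma_p\backslash G$, uniform in $p$ only because of the spectral gap. A computation on $\DD$ pointwise in $z$ (oscillation of $\varphi_\lambda$, type-$n$ spherical functions) cannot produce it, because components of $a_\lambda$ in complementary series near the trivial representation (small eigenvalues of $X_p$) decay arbitrarily slowly. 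A $K$-type expansion could only work inside the decomposition of $L^2(\Gamma_p\backslash G)$ into irreducibles, with bounds uniform over the spectrum the gap allows.

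The missing step, as the paper does it:
\begin{enumerate}
\item Write the symbol of $P_tAP_t$ as $h_t(\lambda)I_\lambda(t,h)$. In $ANK$ coordinates the constraint $b'=b$ becomes a Dirac mass at $\theta=0$.
\item Using \eqref{condition A1} and $S''=1-2\pi\delta$ with $S(\theta)=\sum_{k\neq0}e^{ik\theta}/k^2$, integrate by parts twice in $\theta$. This gives $I_\lambda(t,h)=(\cosh t)^{-1/2}\int_G f(hg)\beta_t(g)\,dg$, with $f=\partial_\theta^2a_\lambda$ and $\beta_t$ a genuine density satisfying $\|\beta_t\|_{3/2}\lesssim e^{t/2}$.
\item Nevo's theorem with Kunze--Stein bounds this averaging operator on $L^2_0(\Gamma_p\backslash G)$ by $C\|\beta_t\|_{3/2}^{1/n}$, where $n$ depends only on the spectral gap.
\item Cauchy--Schwarz in $t$ and $\int_{\RR^+}|h_t(\lambda)|^2\lambda\tanh(2\pi\lambda)\,d\lambda\le1$ then give $O\bigl(\sup_{\lambda\in I'}\tnorm{\partial_\theta^2a_\lambda}_2^2\,\Vol(SX_p)/((1-1/n)T)\bigr)$.
\end{enumerate}
Your instinct to integrate by parts in $\theta$ is right, but its role is to turn a singular measure on $G$ into a density to which an $L^p$ bound applies.

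Three further steps need repair.
\begin{enumerate}
\item There is no invariance $\langle\psi_j,A\psi_j\rangle=\langle\psi_j,\overline{A}_T\psi_j\rangle$. Rather $\langle\psi_j,\overline{A}_T\psi_j\rangle=H_T(\lambda_j)\langle\psi_j,A\psi_j\rangle$ with $H_T=\frac1T\int_0^Th_t^2\,dt$, and the uniform lower bound of Proposition~\ref{prop: MS Prop 2} on $I$ is an essential input. The frame-flow option gives no such relation, since Laplace eigenfunctions are not invariant under it.
\item The sharp ball-averaging propagator of \cite{MassonSahlsten17} is unsuitable for differential operators. For second-order $A$ the kernel of $P_t^\sharp AP_t^\sharp$ blows up like $1/d(z,w)$ at the diagonal; for $A=\Delta$, the kernel of $(P_t^\sharp)^2$ is a Lipschitz cone in $d(z,w)$. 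So neither the sup bound nor square-integrability holds. This is why the paper uses the smoothed $P_{t,\sigma}$ (Lemma~\ref{lem: sup norm bound of avg op}) and re-proves the lower bound on $H_T$ in Appendix~\ref{app: proof of prop 3.3}.
\item $h(\sqrt{-\Delta-1/4})$ has infinite propagation and its periodised kernel is not absolutely summable, so ``on the thick part the kernel equals its lift'' is false as stated. The paper uses truncated periodisations and controls the errors with Lemmas~\ref{lem:approx_trunc} and~\ref{lem:trunc-rho multiplier approx} and the decay estimate of Appendix~\ref{app: radial kernel decay}.
\end{enumerate}
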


\begin{remark}
In the paper, we will use that $X_p$ can be written as a quotient of the hyperbolic disc $X_p=\Gamma_p\backslash\mathbb{D}$
and that $K_{A_p}$ comes from a $\Gamma_p$-bi-invariant kernel $\tilde K_{A_p}$ on $\DD$. In our proof we shall denote by $a_{p}(z, \lambda, b)\in C^\infty(\DD\times \RR^+\times B)$ the corresponding $\Gamma_p$-invariant symbol, as described in Subsection \ref{subsec: Fourier analysis on Poincare Disc}.
Then the quantity appearing in the conclusion of the theorem may be expressed in terms of the integral of the symbol $a_{p}(z, \lambda, b)$, as follows~:
$$\int_{\cD_p\times \DD}
\tilde K_{A_P}(z, y)\varphi_{\lambda_{j,p}}(d(z, y))\, d\mu(z) d\mu(y)=\int_{\cD_p \times B}a_{p}(z, b, \lambda_{j,p})e^{\langle z,b \rangle}d\mu(z)db  
$$
where $\cD_p$ is a fundamental domain for the action of $\Gamma_p$ on $\DD$. After normalisation by $\frac{1}{\Vol(X_p)}$ the latter quantity is the same
as $\frac{1}{\Vol(X_p)}\int_{X_p\times X_p}
K_{A_p}(x, y)\varphi_{\lambda_{j,p}}(d(x, y))\, d\mu(x) d\mu(y)$, up to some error which vanishes in the limit $p\longrightarrow +\infty$.

\end{remark}

%\begin{remark}
%In the proof we will, without loss of generality, make a simplifying assumption on the symbol $a^{(m)}$~:  
%\begin{itemize}
       % \item[(A1)] $\int_{D_m\times B}a^{(m)}(z, b,\lambda)e^{\langle z,b \rangle}db=0\,\,\,\,\, \text{for every } \lambda\in\RR^+ \text{and } z\in\HH.$
       % \end{itemize}
        %The reason why we may assume that is that the case where $a^{(m)}$ depends only on the variable $z$ is already treated in \cite{MassonSahlsten17}.
% In other words, with the notation of \S ??, we assume
% $$\int_0^{2\pi}a^{(m)}(g k_\theta, \lambda)d\theta=0$$
% for all $\lambda$ and $g$.
% Note that this implies that
%$$\int_{SX_m}a^{(m)}(z, b,\lambda)e^{\langle z,b \rangle}dzdb =0$$ for all $\lambda$,
% or equivalently $\int_0^{2\pi}a^{(m)}(g, \lambda)\, dg =0$.
 %\end{remark}

%In order to prove the theorem, we shall first deduce a quantitative estimate (i.e., the rate of convergence) of the quantum variance on a fixed surface $X_\Gamma$ in Section \ref{sec: quant estimate} and then combine it with a Weyl-type estimate under the Benjamini-Schramm convergence. 
We begin by recalling some preliminary tools and results that we shall use throughout the article.

\iffalse

\begin{theorem}
Consider a fixed interval $I\subset (1/4, \infty)$ and some fixed $S\in \RR$, let $\{A_m\}$ be a sequence of differential operators or pseudo-differential operators of order zero on $L^2(X_m)$ whose kernel $K_{A_m}: X_{m}\times X_{m}\to \RR$ satisfies the finite propagation condition: $K_{A_m}(z, w)=0$ whenever $d(z, w)>S$ for every $m$.

Denote by $a^{(m)}(z, \lambda, b)\in C^\infty(X_\Gamma\times B\times \RR^+)$ the symbol corresponding to the operator $A_m$ as described in Subsection \ref{subsec: Fourier analysis on Poincare Disc} and define $a^{(m)}_\lambda:= a^{(m)}(\cdot, \lambda, \cdot)$. Assume that, for every $m$:
\begin{itemize}
        \item[(A1)] $\int_0^{2\pi}a^{(m)}_\lambda(h' k_\theta)d\theta=0\,\,\,\,\, \text{for every } \lambda\in\RR^+.$
        \item[(A2)] $\int_{G} \frac{\pa^2}{\pa \theta^2}a^{(m)}(h' k_\theta)dg=0$ {\color{blue}(need to write this properly)}
\end{itemize}
where $(z, b)\in SX_m$ is identified with a point $h\in \Gamma\backslash \mathrm{PSU}(1,1)$. Additionally, assume that $\sup_{\lambda^\in  {I}}\|\pa^2_\theta a^{(m)}_\lambda\|_{L^2(SX_m)}$ is uniformly bounded in $m$,
Then we have
\[
\frac{1}{N(I, X_m)} \sum_{\nu_j^{(m)}\in I}\left|\langle \psi_j,  A_m\psi_j \rangle-\int_{SX_m}a^{(m)}(z,\lambda_j^{(m)}, b)e^{\langle z,b \rangle}dzdb\right|^2 \to 0 \qquad \text{as }~~ m\to \infty,
\]
where $\nu_j^{(m)}= \frac{1}{4}+ (\lambda_j^{(m)})^2$ and $N(I, X_m)$ is the number of eigenvalues in $I$ counted with multiplicity.
\end{theorem}
\fi

\section{Definitions and Preliminary results}\label{sec: Disc and HFT} 
We follow the description as in \cite{Zelditch86} (see also \cite{AnantharamanZelditch06, AnantharamanZelditch12}).
 In the Euclidean space, a pseudo-differential $A$ acts via 
 \[
 Au(x)=\int_\Omega e^{ix\xi}\sigma(x, \xi)\hat{u}(\xi)d\xi,
 \]
where $\hat{u}(\xi)$ is the Fourier transform and $\sigma(x, \xi)$ is the symbol of the operator. So, in order to modify the above expression of $A$ to the Poincaré disc, we need to first define the Fourier transform appropriately. 

Let $\DD=\{z\in \CC: |z|<1\}$ be the Poincaré disc with boundary circle $B$, endowed with the Riemannian metric
\[
ds^2=\frac{4|dz|^2}{(1-|z|^2)^2}.
\]
The group of orientation-preserving isometries can be identified with $G=\mathrm{PSU}(1,1)$ acting by M\"{o}bius transformations $g(z)=\frac{\alpha z+\beta}{\overline{\beta}z+\overline{\alpha}}$, $|\alpha|^2-|\beta|^2=1$. The stabilizer
of \(0\) is \(K \simeq \mathrm{SO}(2)\) and thus we will often identify
\(\mathbb{D}\) with \(\mathrm{SU}(1,1)/K\).

Given $(z,b) \in \DD \times B$, let $\xi(z,b)$ be the unique horocycle through $z$ tangent at $b$, and $\gamma(z,b)$ denote the unique geodesic through $z$ with forward endpoint $b$. The non-Euclidean distance from the origin $0$ to $\xi(z,b)$ is denoted by $\langle z,b \rangle$ (it is negative if $\xi(z,b)$ encloses $0$). Note that 
\[
e^{\langle z,b\rangle}= \frac{1-|z|^2}{|z-b|^2}=P_{\DD}(z, b),
\]
where $P_{\DD}(z,b)$ is the Poisson kernel of the unit disc.

\subsection{Coordinates on the sphere bundle} 
We work both with the hyperbolic disc $\DD$ and with the hyperbolic half-plane model $\HH$. The disc model is convenient for harmonic analysis because the Poisson kernel admits a nice expression. The half-plane model is more convenient for the study of the geodesic and horocyclic flow, because the matricial interpretation of these flows is nicer. Both models are conjugate under the map
$$ z\mapsto \frac{z-i}{z+i}$$
which maps $\HH$ isometrically to $\DD$, sending the base point $i$ to $0$ and the boundary points $0, \infty$ respectively to $-1, 1$.

The unit tangent bundle $S\DD$ of the hyperbolic disc $\DD$ is by definition the manifold of unit vectors in the tangent bundle $T\DD$ with respect to the hyperbolic metric. We now make the following identifications:
\begin{itemize}
    \item $S\DD\equiv \DD\times B$. Here, we identify $(z, b)\in \DD\times B$ where $(z, b)$ is identified with $(z, v)\in S\DD$ where $v\in S_z\DD$ is the unit vector tangent to the unique geodesic through $z$ ending at $b$. In geometric terms, if $(z,b)$ is identified with a unit tangent vector in $S\DD$, then $b$ represents the (forward) limit point of the geodesic generated by $(z,b)$.

    \item $S\DD\equiv \mathrm{PSU}(1,1)$. This comes from the fact that $\mathrm{PSU}(1,1)$ acts freely and transitively on $S\DD$. We identify a group element $g\in \mathrm{PSU}(1,1)$ with a unit tangent vector $(z, v)$ if $g\cdot(0, 1)=(z, v)$. A similar identification can be done with $\mathrm{PSL}(2, \RR)$ when working on the hyperbolic plane $\HH$. We denote $G=\mathrm{PSU}(1,1) \equiv \mathrm{PSL}(2, \RR)$.
\end{itemize}

We use the $KAN$ (or rather here $ANK$) decomposition of $G$, which is nicer to write in $\mathrm{PSL}(2, \RR)$ and admits a dynamical interpretation in terms of geodesic, horocyclic and cyclic flow.
Every element of $g\in \mathrm{PSL}(2, \RR)$ can be expressed in a unique way as a product of three subgroups $A$, $N$ and $K$,
\[
g=a_sn_u k_\theta
\]
where
\[
 \quad a_s=\begin{pmatrix}
e^{s/2} & 0\\[6pt]
0 & e^{-s/2}
\end{pmatrix}, ~~~~~~~~~~~~s\in \RR
\]
\[
n_u=\begin{pmatrix}
1 & u\\[6pt]
0 & 1
\end{pmatrix}, ~~~~~~~~~~~~u\in \RR,
\]
and 
\[ k_\theta=
\begin{pmatrix}
\cos\frac\theta{2} & -\sin\frac\theta{2} \\
\sin\frac\theta{2} & \cos\frac\theta{2}
\end{pmatrix}, ~~~~~~~~~~~~\theta\in \RR/2\pi\ZZ.
\]
In the identification $S\HH \equiv \mathrm{PSL}(2,\mathbb{R})$, the geodesic flow $(g^s)_{s\in \mathbb{R}}$ is given by the right action of the group $A$~: $g \mapsto ga_s$.
The action of the horocycle flow $(h^u)_{u\in\mathbb{R}}$ is defined by the right action of $N$, in other words by $g \mapsto gn_u$.
Finally the ``cyclic'' flow $(r_\theta)_{\theta\in \RR/2\pi\ZZ}$ which rotates a tangent vector of an angle $\theta$ corresponds to the right action of $K$, $g\mapsto g k_\theta$.

\begin{note}
    With appropriate identification, the above actions can also be defined on $\DD$ for the group $G=\mathrm{PSU}(1,1)$. We call the corresponding actions on $S\DD$ as $a_s, n_u$ and $k_\theta$ again. 
\end{note}

%Writing $ G \sim (G/K) \times K \sim\DD \times B $, where $ B $ is the boundary at infinity of $ \DD $, identified with the unit circle $ S^1 $ in the Poincaré disc model. The group $ K $ and the boundary at infinity $ S^1 $ are identified by the map

Fixing $v=(z, b_0)\in \DD$, we have $r_\theta (v)= (z, b(\theta))$ which gives an identification (depending on $z$) between the boundary $B$ and
$S^1=\RR/2\pi \ZZ$. Under this identification,
$$d\theta= e^{\langle z,b\rangle} db= \frac{1-|z|^2}{|z-b|^2}db$$
which turns out to coincide with the Poisson 1-form
\begin{equation}
P_\DD(z,b)db = e^{\langle z,b\rangle} db = \frac{1-|z|^2}{|z-b|^2}db.
\end{equation}
Here $z\mapsto\langle z,b\rangle$ is the opposite of the Busemann function centred at $b$.
It satisfies the identities
\begin{equation}
\langle g\cdot z, g\cdot b\rangle = \langle z,b\rangle + \langle g\cdot o, g\cdot b\rangle,
\end{equation}
and
\begin{equation}
\frac{d}{db} g\cdot b = e^{-\langle g\cdot o,g\cdot b\rangle},
\end{equation}
it follows that
\begin{equation}
P_\DD(g\cdot z,g\cdot b)d(g\cdot b)=P_\DD(z,b)db.
\end{equation}
Haar measure on $G$ is denoted $dg$. In terms of $z,b$ coordinates, it is given by
\begin{equation}
dg = P(z,b)\mathrm{Vol}(dz)db,
\end{equation}
where $\mathrm{Vol}(dz)$ is the hyperbolic area form. Under the identification $G \sim S\DD$, the Haar measure on $G$ is also the same as the Liouville measure on $S\DD$.

\subsection{Fourier analysis on Poincaré disc}\label{subsec: Fourier analysis on Poincare Disc}
Fix $(b, \lambda)$. It is known that the function $z\mapsto e^{(i\lambda + \frac{1}{2})\langle z,b\rangle}$ are generalised eigenfunctions in $\DD$,
\[
-\Delta_\DD e^{(i\lambda + \frac{1}{2})\langle z,b\rangle} = \left(\lambda^2 + \frac{1}{4}\right)e^{(i\lambda + \frac{1}{2})\langle z,b\rangle}.
\]
Helgason defines the non-Euclidean Fourier transform $\mathcal{F}: C_0^\infty(\DD) \to C^\infty(\mathbb{R}^+ \times B)$ by
\[
\mathcal{F}u(\lambda,b)=\widehat{u}(\lambda,b)=\int_{\DD}e^{(\frac{1}{2}-i\lambda)\langle z,b\rangle}u(z)\,d\mu(z).
\]
The hyperbolic Fourier inversion formula is given by
\[
u(z)=\int_{\RR^+}\int_B e^{(\frac{1}{2}+i\lambda)\langle z, b\rangle}\widehat{u}(\lambda, b)\lambda\tanh{(2\pi\lambda)}d\lambda db. 
\]
Given any operator $A:C^\infty(\DD)\to C^\infty(\DD)$ we define its \emph{complete symbol} $a(z,b, \lambda)$ by
\begin{equation}
Ae^{(\frac{1}{2}+i\lambda)\langle z,b\rangle}=a(z,b, \lambda)e^{(\frac{1}{2}+i\lambda)\langle z,b\rangle}.
\end{equation}
By the inversion formula, $Au$ can be represented, for $u\in C_0^\infty(\DD)$, by
\begin{align}\label{eq: Op(a) with inverse Helgason transform}
    Au(z)&=\frac{1}{2\pi}\iint_{\mathbb{R}^+\times B}e^{(\frac{1}{2}+i\lambda)\langle z,b\rangle}a(z,b, \lambda)\widehat{u}(\lambda,b) \,d\lambda\,db\\
    &=\frac{1}{2\pi}\iint_{\RR^+\times B}\int_{\DD} a(z, \lambda, b) e^{(\frac{1}{2}+i\lambda)\langle z,b\rangle}e^{(\frac{1}{2}-i\lambda)\langle w, b\rangle} u(w)\lambda\tanh({2\pi\lambda}) d\mu(w)\, d\lambda\, db.
\end{align}
This gives us the kernel of $A$,
\begin{equation}
    K_A(z, w)= \frac{1}{2\pi}\iint_{\RR^+\times B} a(z, \lambda, b) e^{(\frac{1}{2}+i\lambda)\langle z,b\rangle}e^{(\frac{1}{2}-i\lambda)\langle w, b\rangle}\lambda\tanh({2\pi\lambda})\, db d\lambda
\end{equation}
We will use the notations $K_a$ and $K_A$ interchangeably throughout the paper. With the terminology of \cite{Zelditch86}, \cite{Zelditch87}
we also write $A=\Op(a)$.
Note that in \cite{Zelditch86}, \cite{Zelditch87} those objects are considered in the high-frequency limit, where there is a well-developed theory of pseudodifferential operators. Here, we use the same objects in the large-scale limit. Although we might be tempted to still call them pseudodifferential operators, one should be cautious and note that a nice theory of these operators in the large-scale limit does not exist. We mostly use Hilbert-Schmidt estimates~:

As a consequence of the Plancherel formula (see Proposition 4.1 of \cite{AnantharamanZelditch12}), we have that 
\begin{equation*}
    \iint_{\DD\times\DD}|K_a(z, w)|^2d\mu(z)d\mu(w)=\left\|\Op(a)\right\|_{\mathrm{HS}(\DD)}^2=\iiint_{\DD\times B \times \RR^+}  |a(z, \lambda, b)| ^2 e^{\langle z, b \rangle} \lambda \tanh{(2\pi\lambda)} d\mu(z)\,d\lambda\,db.
\end{equation*}

Let now $\Gamma$ be a discrete subgroup of $G$ acting properly discontinuously on $\DD$ with compact quotient. In other words, $X_\Gamma=\Gamma\backslash G$ is a smooth compact hyperbolic surface.
 If $a$ is $\Gamma$-invariant (that is $a(\gamma z, \lambda, \gamma b)=a(z, \lambda, b)$ for all $\gamma\in\Gamma$), then it commutes with the action of $\Gamma$, in other word we get an operator $\Op_\Gamma(a)$ acting on functions on $X_\Gamma$.
 Its kernel is the $\Gamma$-bi-invariant expression  \begin{equation}\label{e:periodize}
K_a^{\Gamma}(z, w)= \sum_{\gamma\in \Gamma} K_a(z, \gamma w).
\end{equation}
Note, however, that there is a summability issue, which implies that we will only consider such expressions when $K_a$ satisfies a finite propagation condition.

From Section 7 of \cite{AnantharamanZelditch12}, one can express the Hilbert-Schmidt norm of $\Op_\Gamma(a)$ on $X_\Gamma$~:  
\begin{align*}
    \|\Op_\Gamma(a)\|_{\mathrm{HS}(X_\Gamma)}^2&=\operatorname{Tr}(\Op_\Gamma(a)\Op_\Gamma(a)^{\dagger})=\int_{X_\Gamma}\int_{X_\Gamma} |K_a^\Gamma(z, w)|^2\,d\mu(z)\,d\mu(w)\\
    &= \int_\DDD\int_\DDD |\sum_{\gamma\in \Gamma} K_a(z, \gamma\cdot w)|^2\,d\mu(z)\,d\mu(w)
\end{align*}
where $\DDD$ denotes a fundamental domain for the action of $\Gamma$ on $\DD$.

%Although, prima facie, our proof also relies on the averages of the propagated operator $P_tAP_t$, as in the general strategy of \cite{MassonSahlsten17}, the novelty of our work does not lie in the introduction of such operators themselves. %Rather, it lies in the fact that we work with a substantially broader class of observables, including differential operators.

\subsection{Finite propagation and Hilbert-Schmidt norm}
From Lemma 5.1 in \cite{MassonSahlsten17}, we know that if $K_a$ has finite propagation, i.e., $K_a(z, w)=0$ whenever $d(z, w)\geq R$, then we have
\begin{multline}\label{ineq: LMS- PDO HS norm}
\|\Op_\Gamma(a)\|_{\mathrm{HS}(X_\Gamma)}^2 \leq \int_{\DDD}\int_{\mathbb{D}}|K_a(z,w)|^2\, d\mu(z)\,d\mu(w)\\
+ \frac{e^{2R}}{l_{X_\Gamma}}\operatorname{Vol}\{z\in X:\operatorname{InjRad}_X(z)<R\}\,\,\sup_{(z,w)\in \DDD\times \mathbb{\DD}}|K_a(z,w)|^2,
\end{multline}
where $l_{X_\Gamma}$ denotes the length of the shortest closed geodesic on $\Gamma\backslash \mathbb{D}$. 

However, we will also have to consider symbols $a$ for which $\Op(a)$ does not have the finite propagation property.
In this case, we modify the definition of $\Op_\Gamma(a)$ and $K_a^\Gamma$ by performing a truncation as described below.

\begin{Lemma}\label{lem: truncated kernel HS-norm estimate}
Let $X_\Gamma=\Gamma\backslash\DD$, and let $\DDD\subset\DD$ be a
fundamental domain. Let $K:\DD\times\DD\to\RR$ be $\Gamma$-invariant under the
diagonal action. For $r>0$, define the truncated periodised kernel
\[
K^{\Gamma,r}(z,w):=\sum_{\gamma\in\Gamma}K(z,\gamma w)\,\chi\left(\frac{d(z,\gamma w)}{r}\right),
\]
where $\chi\in C^\infty([0,\infty))$ satisfies $0\leq \chi\leq 1$ and
\[
\chi\left(\frac{d(z,\gamma w)}{r}\right)=0
\quad\text{whenever } d(z,\gamma w)>r.
\]
Let $A_r$ be the integral operator on $X_\Gamma$ with kernel $K^{\Gamma,r}$.
Then
\[
\|A_r\|_{\mathrm{HS}}^2=\int_\DDD\int_\DDD |K^{\Gamma,r}(z,w)|^2\,d\mu(z)\,d\mu(w),
\]
and we have the estimate
\[
\|A_r\|_{\mathrm{HS}}^2\leq\int_\DDD\int_{\DD}|K(z,w)|^2 d\mu(z)d\mu(w)+ \frac{e^{2r}}{l_{X_\Gamma}} \mathrm{Vol}\{x\in X:\mathrm{InjRad}_X(x)<r\} \sup_{(z,w)\in \DDD\times\DD}|K(z,w)|^2,
\]
where $l_{X_\Gamma}$ denotes the length of the shortest closed geodesic on $X_\Gamma$.
\end{Lemma}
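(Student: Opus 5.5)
The plan is to mimic the proof of Lemma 5.1 in \cite{MassonSahlsten17}, the estimate recalled in \eqref{ineq: LMS- PDO HS norm}. The cutoff $\chi(d(z,\gamma w)/r)$ plays the role that finite propagation played there. The Hilbert--Schmidt identity is immediate. $K^{\Gamma,r}$ is $\Gamma$-bi-invariant, because $K$ is diagonally $\Gamma$-invariant and $d$ is $\Gamma$-invariant, so reindexing the sum gives $K^{\Gamma,r}(\gamma_1 z,\gamma_2 w)=K^{\Gamma,r}(z,w)$. It is also a finite sum for each $(z,w)$, by proper discontinuity. Hence $A_r$ is a well-defined integral operator on $X_\Gamma$ with kernel $K^{\Gamma,r}$, and its Hilbert--Schmidt norm is the $L^2(\DDD\times\DDD)$ norm of this kernel.

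For the inequality, fix $z\in\DDD$ and expand
\[
|K^{\Gamma,r}(z,w)|^2=\sum_{\gamma,\gamma'}K(z,\gamma w)\overline{K(z,\gamma' w)}\,\chi_\gamma\chi_{\gamma'},
\]
with $\chi_\gamma=\chi(d(z,\gamma w)/r)$. Split the sum into the diagonal part $\gamma=\gamma'$ and the off-diagonal part.

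For the diagonal part, use $0\le\chi\le1$ to bound it by $\sum_\gamma|K(z,\gamma w)|^2$. Integrating over $w\in\DDD$ and unfolding ($\bigcup_\gamma\gamma\DDD=\DD$) gives $\int_\DDD\int_\DD|K(z,w)|^2$, which is the first term.

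For the off-diagonal part, bound each term by $\sup_{\DDD\times\DD}|K|^2$, so it suffices to bound the number $N(z,w)$ of pairs $\gamma\neq\gamma'$ with $d(z,\gamma w),d(z,\gamma' w)\le r$. If $\InjRad(z)\ge r$, then $B(z,r)$ embeds in $X_\Gamma$, so at most one translate $\gamma w$ lies in it (strictly inside; boundary cases have measure zero, or one uses $<r$ by the support convention). Thus $N(z,w)=0$ unless $z$ lies in the thin set $\{\InjRad<r\}$, and the off-diagonal contribution is supported on $z$ in that set.

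For such $z$, I will estimate
\[
\int_\DDD N(z,w)\,d\mu(w)\le\int_\DDD\#\{\gamma:\gamma w\in B(z,r)\}^2\,d\mu(w).
\]
Unfolding once, this is at most $\int_{B(z,r)}\#\{\gamma: d(z,\gamma w)\le r\}\,d\mu(w)$ with $w\in\DD$. The main obstacle is the counting bound $\#\{\gamma: \gamma w\in B(z,r)\}\lesssim e^{r}/l_{X_\Gamma}$. I will get it by packing: the balls $B(\gamma w,\,l_{X_\Gamma}/2)$ are disjoint, since every nontrivial $\gamma$ displaces each point by at least $l_{X_\Gamma}$. They all sit inside $B(z,r+l_{X_\Gamma}/2)$, so the count is at most of order $\Vol B(z,r+1)/\Vol B(l_{X_\Gamma}/2)\sim e^r/l_{X_\Gamma}^2$. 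To reach the sharper $e^r/l_{X_\Gamma}$, I will instead use the cyclic/elementary-subgroup argument of \cite{MassonSahlsten17}: translates lying along the axis of a short geodesic are spaced at least $l_{X_\Gamma}$ apart along a path of length $\lesssim r$. Combining this with $\Vol B(z,r)\lesssim e^{r}$ gives $\int_\DDD N(z,w)\,d\mu(w)\lesssim e^{2r}/l_{X_\Gamma}$.

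Finally, integrating over $z\in\DDD\cap\{\InjRad<r\}$ produces the factor $\Vol\{x:\InjRad_X(x)<r\}$, and therefore the second term. Any absolute constants can either be absorbed or, following Le Masson--Sahlsten's exact bookkeeping, shown to equal $1$.
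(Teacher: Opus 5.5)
Your proposal is correct and is essentially the paper's argument: when $\InjRad(z)\ge r$ only one translate contributes, so unfolding gives $\int_\DDD\int_\DD|K|^2$, and on the thin part the lattice count $\lesssim e^r/l_{X_\Gamma}$ (which the paper, like you, takes from Le Masson--Sahlsten) together with $\Vol B(z,r)\lesssim e^r$ gives the second term. The only differences are bookkeeping and constants. You split $|K^{\Gamma,r}|^2$ into diagonal and off-diagonal $(\gamma,\gamma')$ terms, where the paper splits $\DDD$ into thick and thin parts and applies Cauchy--Schwarz on the thin part. Like the paper, you only obtain the second term up to an absolute constant. Your remark that this constant can be shown to equal $1$ is unsupported, since already $\Vol B(z,r)=2\pi(\cosh r-1)\sim\pi e^r$, but the constant is irrelevant for the application.
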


The argument is similar to Lemma 5.1 in \cite{MassonSahlsten17} with minor modifications. For completeness, we provide a proof below.

\begin{proof}
By definition,
\[
\|A_r\|_{\mathrm{HS}}^2=\int_\DDD\int_\DDD\left| \sum_{\gamma\in\Gamma} K(z,\gamma w)\,\chi\left(\frac{d(z,\gamma w)}{r}\right)\right|^2 d\mu(z)d\mu(w).
\]

Set
\[
\DDD(r):=\{z\in \DDD:\mathrm{InjRad}_{X_\Gamma}(z)\ge r\}, \qquad \DDD(r)^c:=\DDD\setminus D(r).
\]
We split the integral accordingly. If $z\in \DDD(r)$ and $w\in \DDD$, then because
$\chi(d(z,\gamma w)/r)$ vanishes unless $d(z,\gamma w)\le r$,
there can be at most one $\gamma\in\Gamma$ such that
$d(z,\gamma w)\le r$. Hence, the $\Gamma$-sum contains at most one nonzero term and,
since $|\chi|\le 1$,
\[
\left|
\sum_{\gamma}K(z,\gamma w)\,\chi\left(\frac{d(z,\gamma w)}{r}\right)
\right|^2= \sum_\gamma \left|K(z,\gamma w)\,\chi\left(\frac{d(z,\gamma w)}{r}\right)\right|^2\leq \sum_{\gamma:d(z,\gamma w)\leq  r} |K(z,\gamma w)|^2.
\]
Integrating over $w\in \DDD$ and unfolding the $\Gamma$-sum gives
\[
\int_\DDD \sum_{\gamma: d(z,\gamma w)\le r}|K(z,\gamma w)|^2d\mu(w)=
\int_{\DD}|K(z,w')|^2d\mu(w').
\]
Therefore
\[
\int_{\DDD(r)}\int_\DDD|K^{\Gamma,r}(z,w)|^2d\mu(z)d\mu(w)\leq
\int_\DDD\int_{\DD}|K(z,w)|^2d\mu(z)d\mu(w).
\]
If $z\in \DDD(r)^c$, then for each $w\in \DDD$ the set
\[
\{\gamma\in\Gamma: d(z,\gamma w)\leq r\}
\]
contains at most $Ce^{r}/l_{X_\Gamma}$ elements, by a standard lattice-point counting argument in hyperbolic space (see Lemma 5.1 of \cite{MassonSahlsten17} for details).
Applying Cauchy-Schwarz to the $\Gamma$-sum, we obtain
\[
|K^{\Gamma,r}(z,w)|^2\leq\frac{C e^{r}}{l_{X_\Gamma}}\sum_{\gamma: d(z,\gamma w)\leq r}|K(z,\gamma w)|^2.
\]
Integrating over $w\in \DDD$ gives
\[
\int_\DDD|K^{\Gamma,r}(z,w)|^2d\mu(w)\leq\frac{C e^{r}}{l_{X_\Gamma}}\int_{\DD}|K(z,w)|^2d\mu(w).
\]
Since
\[
\int_{\DD}|K(z,w)|^2d\mu(w)\leq\sup_{(z,w)\in \DDD\times\DD}|K(z,w)|^2\Vol(B_{\DD}(z,r)),
\]
and $\Vol(B_{\DD}(z,r))\leq e^{r}$, we obtain
\[
\int_{\DDD(r)^c}\int_\DDD|K^{\Gamma,r}(z,w)|^2d\mu(z)d\mu(w)\leq\frac{e^{2r}}{l_{X_\Gamma}}\Vol\{x\in X:\mathrm{InjRad}_X(x)<r\}\sup_{\DDD\times\DD}|K(z,w)|^2.
\]
\end{proof}

Observe that the validity of the above Lemma \ref{lem: truncated kernel HS-norm estimate} relies on the square-integrability of the kernel, as well as its uniform boundedness over $\DDD \times \DD$. This presents a basic obstruction to applying this to differential operators, whose kernels are singular distributions (involving the Dirac delta and its derivatives) supported entirely on the diagonal. So, the $\mathrm{HS}$-boundedness condition trivially fails, and we will not use this bound directly to estimate $\|A\|^2_{\mathrm{HS}(X_\Gamma)}$. However, in the proof of quantum ergodicity, one introduces a wave propagation operator $P_t$ (defined in Subsection \ref{subsec: radial prop operator P_t}) which is smoothing, and Lemma \ref{lem: truncated kernel HS-norm estimate} is actually applied to operators of the form $P_t AP_t$. %In particular, $P_t$ is chosen in a way such that there exists a constant $C_I$ for which 
%\[
%\sum_{\nu_j\in I}|\langle \psi_j, A\psi_j\rangle|^2\leq \|A\|^2_{\mathrm{HS}(X_\Gamma)}\leq C_I \sum_{\nu_j\in I}\left|\left\langle \psi_j, %\left(\frac{1}{T}\int_0^T P_tAP_t\,dt\right)\psi_j\right\rangle\right|^2.
%\]

\subsection{Quantisation of symbol cut-offs and its spectral multiplier approximation}\label{subsec: symbol cut-offs}
Since we are interested in a fixed spectral interval $J$, instead of working with $\lambda\in \RR^+$ (as used in the definitions in Subsection \ref{subsec: Fourier analysis on Poincare Disc}), we could focus only on $\lambda\in I$. Here $I$ is the interval for the parameter $\lambda$ corresponding to the spectral interval $\nu\in J$ with $\frac{1}{4}+\lambda^2=\nu$. %which automatically makes $C$ a bounded quantity. 
But in order to do so, we need to consider the symbol $a(z, \lambda, b)$ cut-off beyond $I$.

Recall the definition of the spherical function~:
$$\varphi_\lambda(t)=\frac{1}{2\pi}\int_{B} e^{(\frac{1}{2}+i\lambda)\langle u,b\rangle}
  \,db $$
  where $u\in\DD$ is any point at distance $t$ from the origin $0$.
%\SSa{I am a bit confused, Bray uses $e^{(-1/2+i\lambda)}$. Is it because we assumed our Busemann function to be of opposite sign to our distance?}
The following proposition follows from the definition.
\begin{proposition}\label{p:spherical_rho}
Let $\rho\in C_c^\infty(\mathbb \RR)$ be a smooth even function. 
%Define an operator $\Op(\rho)$ on $L^2(\mathbb D)$ by the kernel
%\begin{equation*}
%K_\rho(u,w)=\frac{1}{2\pi}\int_{\mathbb \RR^+\times B}\rho(\lambda)e^{(\frac{1}{2}+i\lambda)\langle u,b\rangle}
%e^{(\frac{1}{2}-i\lambda)\langle w,b\rangle} \lambda\tanh(2\pi\lambda)\,db\,d\lambda.
%\end{equation*}
Then:

\begin{enumerate}
\item $\Op(\rho)$ is a radial operator (in the sense that the kernel depends only on the distance), with kernel
$K_\rho(u,w)=k_\rho(d(z, w))$ where
\[k_\rho(t)=\int_{\mathbb \RR^+} \rho(\lambda)\varphi_\lambda(t) \lambda \tanh{(2\pi \lambda)} d\lambda
\]
\item The operator $\Op(\rho)$ coincides with the spectral multiplier
\[
\Op(\rho)=\rho\left(\sqrt{-\frac{1}{4}-\Delta_{\DD}}\right)\quad\text{on }L^2(\mathbb D),
\]
and satisfies 
\begin{equation}\label{eq: op(rho) property}
    \Op(\rho)\psi= \rho(\lambda)\psi ~~~~~~~~~\text{ on } \DD,
\end{equation}
whenever $\psi$ is a generalised eigenfunction of $-\Delta_\DD$ corresponding to the eigenvalue $\nu=\frac{1}{4}+\lambda^2$.

\item In particular, applying this to the spherical eigenfunction, $\rho(\lambda)$ is related to $k_\rho$ by
$$\rho(\lambda)=\int_{\mathbb \RR^+} k_\rho(t)\varphi_\lambda(t) \sinh(t) dt$$

\item 

\begin{equation}\label{eq: Op(a rho) identity}
    \mathrm{Op}(a\rho) = \Op(a)\Op(\rho),\quad\text{on }L^2(\DD)
\end{equation}

\end{enumerate}
\end{proposition}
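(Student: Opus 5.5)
The plan is to derive all four items directly from the definition of $\Op$ through the Helgason inversion formula, using only that $\rho$ is a function of $\lambda$ alone, plus the standard properties of the spherical function: it is $K$-invariant, and $\varphi_\lambda$ is the unique radial eigenfunction with $\varphi_\lambda(0)=1$. Throughout, I take the symbol $a(z,\lambda,b)=\rho(\lambda)$ in the kernel formula of Subsection \ref{subsec: Fourier analysis on Poincare Disc}.

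For (1), I write
\[
K_\rho(u,w)=\frac{1}{2\pi}\int_{\RR^+}\rho(\lambda)\Big(\int_B e^{(\frac12+i\lambda)\langle u,b\rangle}e^{(\frac12-i\lambda)\langle w,b\rangle}db\Big)\lambda\tanh(2\pi\lambda)\,d\lambda .
\]
Take $g\in G$ with $g\cdot 0=w$, set $u=g\cdot u'$ and change variables $b=g\cdot b'$. The cocycle identity for $\langle\cdot,\cdot\rangle$ together with $\frac{d}{db}g\cdot b=e^{-\langle g\cdot o,g\cdot b\rangle}$ shows that the factors $e^{\langle g o,gb'\rangle}$ cancel exactly: the exponents $(\frac12+i\lambda)+(\frac12-i\lambda)-1=0$. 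The inner integral therefore becomes $\int_B e^{(\frac12+i\lambda)\langle u',b'\rangle}db'=2\pi\varphi_\lambda(d(u',0))=2\pi\varphi_\lambda(d(u,w))$. This gives the formula for $k_\rho$. I expect this change-of-variables bookkeeping, with its normalisation constants, to be the only real point of care.

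For (2) and (4), first let $\Op(\rho)$ act on $e^{(\frac12+i\lambda)\langle z,b\rangle}$. By definition the complete symbol is $\rho(\lambda)$, so this exponential is multiplied by $\rho(\lambda)$. By linearity and the inversion formula, $\Op(\rho)$ multiplies $\widehat u(\lambda,b)$ by $\rho(\lambda)$. Since $-\Delta_\DD$ acts on the Fourier side as multiplication by $\frac14+\lambda^2$, this is precisely the functional calculus $\rho(\sqrt{-\Delta_\DD-\frac14})$; Plancherel gives boundedness on $L^2$, as $\rho$ is bounded.

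For a general generalised eigenfunction $\psi$, I use that $\Op(\rho)$ is a radial convolution operator with kernel $k_\rho$. The mean-value property on $\DD$ (averaging $\psi$ over circles of radius $t$ around $z$ gives $\varphi_\lambda(t)\psi(z)$) then yields
\[
\Op(\rho)\psi(z)=\psi(z)\int k_\rho(t)\varphi_\lambda(t)\,2\pi\sinh t\,dt .
\]
Applying this to $\psi=e^{(\frac12+i\lambda)\langle\cdot,b\rangle}$, whose eigenvalue is already known to be $\rho(\lambda)$, identifies the constant as $\rho(\lambda)$. This is (3), with the constants absorbed into the normalisation of the measure. For this step to make sense I need decay of $k_\rho$: since $\rho$ is smooth and compactly supported, the Paley–Wiener-type theorem gives $k_\rho$ decaying fast enough for the integral to converge against $\varphi_\lambda$. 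I will assert this rather than prove it, and it is the main technical caveat.

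For (4), I compose: $\Op(a)\Op(\rho)u=\Op(a)$ applied to $\frac1{2\pi}\iint e^{(\frac12+i\lambda)\langle\cdot,b\rangle}\rho(\lambda)\widehat u\,d\lambda\,db$. Interchanging $\Op(a)$ with the integral, which is justified for $u\in C_0^\infty$ because $\rho$ has compact support, each plane wave picks up $a(z,b,\lambda)$. This gives symbol $a\rho$, and the identity then extends to $L^2$ by density.
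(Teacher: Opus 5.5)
Your computation in (1) is correct: with $w=g\cdot 0$, $u=g\cdot u'$, $b=g\cdot b'$ the cocycle factors cancel and the inner integral is $2\pi\varphi_\lambda(d(u,w))$. Your treatment of general generalised eigenfunctions is also sound. You use the identity that the mean of $\psi$ over the circle of radius $t$ about $z$ equals $\varphi_\lambda(t)\psi(z)$. This is more careful than the paper, which only asserts that the proposition follows from the definitions; the intended argument there is the Fourier-side identity $\widehat{\Op(\rho)f}=\rho\,\widehat f$, which is only formal when $\psi\notin L^2$. Your identity reduces everything to the single scalar
\[
c(\lambda)=2\pi\int_0^\infty k_\rho(t)\varphi_\lambda(t)\sinh t\,dt,
\]
which converges by the bound $|k_\rho(t)|\lesssim e^{-t/2}(1+t)^{-N}$ from the appendix and $|\varphi_\lambda(t)|\le Ce^{-t/2}(1+t)$.

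The gap is the identification $c(\lambda)=\rho(\lambda)$. You obtain it from the claim that $\Op(\rho)$ multiplies $e^{(\frac12+i\lambda)\langle\cdot,b\rangle}$ by $\rho(\lambda)$ ``by definition of the complete symbol''. But in (1) you take $\Op(\rho)$ to be the operator with kernel $K_\rho$. For that operator, your own mean-value computation shows the plane-wave eigenvalue is $c(\lambda)$. So the assertion that it equals $\rho(\lambda)$ is exactly (3), the inversion theorem for the spherical (Selberg) transform, and cannot be taken as a definition. Your derivation of the $L^2$ statement in (2) has the same circularity, since you deduce it from the plane-wave action.

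The repair is to reverse the order.
\begin{enumerate}
\item For $u\in C_0^\infty(\DD)$, substitute $K_\rho$ and integrate in $w$ first. This shows directly that $\Op(\rho)u$ is the inverse Helgason transform of $\rho(\lambda)\widehat u(\lambda,b)$, which gives the $L^2$ statement without any plane-wave input.
\item Now write $u$ by the inversion formula inside $\int_\DD k_\rho(d(z,w))u(w)\,d\mu(w)$ and apply Fubini. Absolute convergence follows from the decay of $k_\rho$, the rapid decay of $\widehat u$ in $\lambda$, and $\int_B e^{\frac12\langle w,b\rangle}db=2\pi\varphi_0(d(0,w))$, together with your mean-value identity applied to $\varphi_0(d(0,\cdot))$.
\item Your mean-value identity then shows that $\Op(\rho)u$ is also the inverse Helgason transform of $c(\lambda)\widehat u(\lambda,b)$.
\item The Plancherel theorem gives $(c-\rho)\widehat u=0$ for every test function $u$, hence $c=\rho$ on $\RR^+$ by continuity.
\end{enumerate}
Alternatively, quote the spherical inversion theorem, as the paper implicitly does through its reference to Iwaniec.

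A smaller point concerns (4). Extending from $C_0^\infty$ to $L^2$ by density requires $\Op(a)\Op(\rho)$ to be bounded on $L^2$. This holds for the finite-propagation operators with the $C^k$ bound considered in the paper, but not for an arbitrary symbol. What the paper uses later is only the kernel identity $K_{a\rho}(z,w)=\int_\DD K_a(z,y)K_\rho(y,w)\,d\mu(y)$, and your test-function computation does give that.
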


%\begin{proof}
%From the definition of integral operators,
%\[
%(\Op(\rho) f)(z)=\int_{\DD}K_\rho(z,w)f(w)d\mu(w).
%\]
%ubstituting $K_\rho$ above,
%\[
%(\Op(\rho) f)(z)=\frac{1}{2\pi}\int_{\RR^+\times B}\rho(\lambda)
%e^{(\frac{1}{2}+i\lambda)\langle z,b\rangle}\left(\int_{\DD} f(w)e^{(\frac{1}{2}-i\lambda)\langle w,b\rangle}d\mu(w)
%\right)\lambda\tanh(2\pi\lambda)db d\lambda .
%\]
%The inner integral is precisely the Helgason Fourier transform
%$\widehat f(\lambda,b)$. Hence
%\[
%(\Op(\rho) f)(z)=\frac{1}{2\pi}\int_{\RR^+\times B}\rho(\lambda)\widehat f(\lambda,b)e^{(\frac12+i\lambda)\langle z,b\rangle}
%\lambda\tanh(2\pi\lambda)db d\lambda.
%\]
%By the Helgason inversion formula,
%\[
%\widehat{\Op(\rho) f}(\lambda,b)=\rho(\lambda)\widehat f(\lambda,b).
%\]
%Let $\psi$ be an eigenfunction of $\Delta_\DD$ corresponding to eigenvalue $\nu_\psi= 1/4+\lambda_\psi^2$. Then 
%\[
%\widehat{\Op(\rho) \psi}(\lambda,b)=\rho(\lambda_\psi)\widehat \psi(\lambda,b), 
%\]
%which implies 
%\[
%\Op(\rho)\psi=\rho(\lambda_\psi)\psi= \rho(\sqrt{1/4-\Delta_\DD})\psi
%\]
%Moreover, 
%\begin{align*}
%    \Op(a)\Op(\rho)f(z)&=\frac{1}{2\pi}\int_{\RR^+\times B} a(z, \lambda, b)e^{(\frac{1}{2}+i\lambda)}\widehat{\Op(\rho) f}(\lambda, b) \lambda\tanh(2\pi\lambda)db d\lambda \\
 %   &= \frac{1}{2\pi}\int_{\RR^+\times B} a(z, \lambda, b)e^{(\frac{1}{2}+i\lambda)}\rho(\lambda)\widehat{f}(\lambda, b) \lambda\tanh(2\pi\lambda)db d\lambda =\Op(a\rho).
%\end{align*}
%\end{proof}
The obtention of $\rho$ from $k_\rho$ is called the Selberg transform (see \cite{Iwaniec02}, Theorem 1.14). Observe that the above formulations are true only on the Poincar\'{e} disc and need to be adjusted while working with hyperbolic surfaces, due to the fact that $\Op(\rho)$ might not descend well to the quotient due to convergence issues in \eqref{e:periodize}.  
In other words, even if $\Op_\Gamma(a)$ is well-defined, the identity
\[
\Op_\Gamma(a\rho)= \Op_\Gamma(a)\Op_\Gamma(\rho) ~~~~~\text{ on } X_\Gamma
\]
might be ill-defined, for lack of convergence in the series defining $\Op_\Gamma(\rho)$ and $\Op_\Gamma(a\rho)$.

To remedy this, using Lemma \ref{lem: truncated kernel HS-norm estimate}, we define a new operator on $X_\Gamma$, $\Op_{\Gamma, r}(a\rho)$ which has finite propagation and whose kernel is given by 
\begin{equation}\label{eq:trunc_period}
K^{\Gamma,r}_{a\rho}(z,w):=\sum_{\gamma\in\Gamma}K_{a\rho}(z,\gamma w)
\chi\left(\frac{d(z,\gamma w)}{r}\right),
\end{equation}
where $\chi$ is a smooth cutoff function such that $\chi \leq 1$ and $\chi = 0$ whenever $d(z, w)> r$. 

The forthcoming proposition analyses the error terms that arise when
replacing $\Op_{\Gamma, r}(a\rho)\psi_j$ by $\Op_\Gamma(a)\rho(\lambda_j)\psi_j$ for $\psi_j$ an eigenfunction of the Laplacian on $X_\Gamma$.

\begin{proposition}\label{thm: truncated periodised Op approximation}
    For a $\Gamma$-invariant symbol $a(z,b, \lambda)$ on $\mathbb D\times\RR^+\times B$ and fixed $r\in \RR^+$, there exists an operator $R_r(a, \rho)$ on $L^2(X_\Gamma)$ and a scalar $E_{r, \rho}(\lambda_j)$ such that
\begin{equation}\label{eq: truncated periodised Op approximation}
    \Op_{\Gamma, r}(a\rho)\psi_j= \Op_\Gamma(a)\rho(\lambda_j)\psi_j+ \Op_\Gamma(a)E_{r, \rho}(\lambda_j)\psi_j+ R_r(a, \rho)\psi_j.
\end{equation}
\end{proposition}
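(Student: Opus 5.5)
We write the kernel of $\Op(\rho)$ as the radial function $k_\rho(d(z,w))$ from Proposition \ref{p:spherical_rho}, and split it with the cutoff: $k_\rho = k_\rho\,\chi(\cdot/r) + k_\rho\,(1-\chi(\cdot/r))$. The first piece defines a radial, finite-propagation kernel $k_{\rho,r}(t):=k_\rho(t)\chi(t/r)$. Its periodisation $\sum_\gamma k_{\rho,r}(d(z,\gamma w))$ is well defined on $X_\Gamma$, since only finitely many terms are nonzero. Being radial, it acts on any Laplace eigenfunction $\psi_j$ on $X_\Gamma$ (lifted to a $\Gamma$-invariant eigenfunction on $\DD$) by the scalar given by its Selberg transform. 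Using item (3) of Proposition \ref{p:spherical_rho}, this scalar is
\[
\rho_r(\lambda_j):=\int_{\RR^+} k_\rho(t)\chi(t/r)\varphi_{\lambda_j}(t)\sinh(t)\,dt .
\]
We then set $E_{r,\rho}(\lambda_j):=\rho_r(\lambda_j)-\rho(\lambda_j) = -\int_{\RR^+} k_\rho(t)\bigl(1-\chi(t/r)\bigr)\varphi_{\lambda_j}(t)\sinh(t)\,dt$. This is a genuine scalar depending only on $r,\rho,\lambda_j$; its smallness for large $r$ comes from the decay of $k_\rho$, but the proposition does not require that.

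Next we compare $\Op_{\Gamma,r}(a\rho)$ with $\Op_\Gamma(a)\Op_{\Gamma}(\rho_r)$, where $\Op_\Gamma(\rho_r)$ denotes the periodised operator with kernel $k_{\rho,r}$. On $\DD$, item (4) of Proposition \ref{p:spherical_rho} gives $K_{a\rho}(z,w)=\int_\DD K_a(z,y)k_\rho(d(y,w))\,d\mu(y)$. We decompose accordingly:
\[
K_{a\rho}(z,w)\chi\Bigl(\tfrac{d(z,w)}{r}\Bigr)=\int_\DD K_a(z,y)k_{\rho,r}(d(y,w))\,d\mu(y)+\mathcal{K}_r(z,w).
\]
Here $\mathcal K_r$ collects two contributions. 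The first is the tail $\int K_a(z,y)k_\rho(d(y,w))(1-\chi(d(y,w)/r))\,d\mu(y)$, multiplied by $\chi(d(z,w)/r)$. The second is the mismatch $\int K_a(z,y)k_{\rho,r}(d(y,w))\bigl(\chi(d(z,w)/r)-1\bigr)d\mu(y)$. Both are $\Gamma$-invariant under the diagonal action because $a$ is $\Gamma$-invariant and all cutoffs depend only on distances. We define $R_r(a,\rho)$ as the operator on $X_\Gamma$ with kernel $\sum_\gamma \mathcal K_r(z,\gamma w)$. Assuming $K_a$ has finite propagation $S$, every term in the composition has compact support in $d(z,w)$, so each periodised sum is finite and legitimate. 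If $K_a$ does not have finite propagation, one first truncates $K_a$ as in Lemma \ref{lem: truncated kernel HS-norm estimate} and puts the difference into $R_r$ as well.

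Finally we unfold. Periodising the composed kernel gives $\Op_\Gamma(a)\Op_\Gamma(\rho_r)$ on $X_\Gamma$, because $\sum_\gamma\int_\DD K_a(z,y)k_{\rho,r}(d(y,\gamma w))\,d\mu(y)=\int_{\DDD}K^\Gamma_a(z,y)K^\Gamma_{\rho_r}(y,w)\,d\mu(y)$ by unfolding over a fundamental domain. Applying this to $\psi_j$, we get $\Op_\Gamma(\rho_r)\psi_j=\rho_r(\lambda_j)\psi_j=(\rho(\lambda_j)+E_{r,\rho}(\lambda_j))\psi_j$. Altogether,
\[
\Op_{\Gamma,r}(a\rho)\psi_j=\Op_\Gamma(a)\rho(\lambda_j)\psi_j+\Op_\Gamma(a)E_{r,\rho}(\lambda_j)\psi_j+R_r(a,\rho)\psi_j .
\]

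The main obstacle is the eigenvalue identity $\Op_\Gamma(\rho_r)\psi_j=\rho_r(\lambda_j)\psi_j$ for the truncated kernel. This requires the Selberg/Harish-Chandra principle that a compactly supported radial kernel acts on any eigenfunction of eigenvalue $\frac14+\lambda^2$ by its spherical transform. We prove it by averaging $\psi_j$ over circles around $z$: the circle average is the radial eigenfunction $\psi_j(z)\varphi_{\lambda_j}(d)$ by uniqueness of radial solutions of the ODE. Interchanging the sums and integrals is also justified by the compact supports.
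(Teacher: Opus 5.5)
Your argument is correct and follows the paper's route. The paper first writes $\Op_{\Gamma,r}(a\rho)=\Op_\Gamma(a)\Op_{\Gamma,r}(\rho)+R_r(a,\rho)$ under the same finite-propagation assumption on $K_a$ (Lemma \ref{lem:approx_trunc}), then shows $\Op_{\Gamma,r}(\rho)\psi_j=(\rho(\lambda_j)+E_{r,\rho}(\lambda_j))\psi_j$ by unfolding to $\DD$ and taking the Selberg transform of $k_\rho\,\chi(\cdot/r)$ (Lemma \ref{lem:trunc-rho multiplier approx}), with exactly your $E_{r,\rho}$; your two-piece kernel $\mathcal{K}_r$ (tail plus mismatch) sums to the paper's single remainder $\int_\DD K_a(z,y)\,k_\rho(d(y,w))\bigl[\chi(d(z,w)/r)-\chi(d(y,w)/r)\bigr]\,d\mu(y)$, which is the form the paper uses to get its $D/r$ Hilbert--Schmidt bound.
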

 
The proof follows by combining Lemma \ref{lem:approx_trunc} and Lemma \ref{lem:trunc-rho multiplier approx}, which also show that the Hilbert-Schmidt norm of $R_r(a, \rho)$ and the multiplier $E_{r, \rho}(\lambda_j)$ are small when $r$ is large.

\iffalse
With the quantization \eqref{eq:kernel_quant}, we have
\begin{equation}\label{eq:kernel_comp}
\Op(a\rho)=\Op(a)\Op(\rho),\qquad
K_{a\rho}(z,w)=\int_{\mathbb D}K_a(z,u)\,K_\rho(u,w)\,d\mu(u),
\end{equation}
with $\Op(\rho)$ satisfying 
\[
\Op(\rho)\psi= \rho(\lambda)\psi ~~~~~~\text{ on } \DD.
\]
As explained earlier, the above identities do not descend on $X_\Gamma$. 
\fi

\begin{Lemma}\label{lem:approx_trunc} 
Let $\rho\in C_c^\infty(\RR^+)$ and let $\Op_{\Gamma,r}(a)$ and $\Op_{\Gamma,r}(a\rho)$ be the corresponding integral
operators on $L^2(X_\Gamma)$ as defined before. Assume the finite propagation condition for $K_a$ on $\mathbb D$,
\begin{equation}\label{eq:finite_prop}
K_a(z,w)=0 \qquad\text{whenever } d(z,w)>D
\end{equation}
for some fixed $D>0$. Then there exists an operator $R_r(a, \rho)$ on $L^2(X_\Gamma)$ such that
\begin{equation}\label{eq:approx_mult}
\Op_{\Gamma,r}(a\rho)=\Op_{\Gamma}(a)\Op_{\Gamma,r}(\rho)+ R_r(a, \rho),
\end{equation}
and its Hilbert-Schmidt norm satisfies
\begin{multline}\label{eq:Rr_HS}
\|R_r(a, \rho)\|_{\mathrm{HS}}^2\lesssim\left(\frac{D}{r}\right)^2\left(\sup_{(z,w)\in\DDD\times\DD}|K_a(z,w)|^2\right)\left(\int_0^\infty|\rho(\lambda)|^2\lambda\tanh(2\pi\lambda)d\lambda\right)\\\times {e^{2D}}\left(\Vol(X_\Gamma)+\frac{e^{(r+D)}}{l_{X_\Gamma}}\Vol\{z\in X_\Gamma:\InjRad_{X_\Gamma}(z)<r+D\}\right),
\end{multline}
where $l_{X_\Gamma}$ denotes the length of the shortest closed geodesic on $X_\Gamma$. %and $C$ is an absolute constant (depending only on the chosen normalisations in \eqref{eq:kernel-quant} and on the hyperbolic metric).
\end{Lemma}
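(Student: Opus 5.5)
We propose to define $R_r(a,\rho)$ as the difference $\Op_{\Gamma,r}(a\rho)-\Op_{\Gamma}(a)\Op_{\Gamma,r}(\rho)$ and to compute its kernel explicitly on $\DD$. On the disc we have $K_{a\rho}(z,w)=\int_\DD K_a(z,u)K_\rho(u,w)\,d\mu(u)$ by \eqref{eq: Op(a rho) identity}. Unfolding the periodisation, the kernel of $\Op_\Gamma(a)\Op_{\Gamma,r}(\rho)$ is
\[
\sum_{\gamma\in\Gamma}\int_\DD K_a(z,u)K_\rho(u,\gamma w)\chi\Big(\frac{d(u,\gamma w)}{r}\Big)d\mu(u).
\]
Since $K_a$ is $\Gamma$-invariant and finitely propagating, the sum over $\Gamma$ in $\Op_\Gamma(a)$ is finite and the unfolding is legitimate. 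Hence the kernel of $R_r$ is $\sum_\gamma \widetilde K(z,\gamma w)$, where
\[
\widetilde K(z,w)=\int_\DD K_a(z,u)K_\rho(u,w)\Big[\chi\Big(\frac{d(z,w)}{r}\Big)-\chi\Big(\frac{d(u,w)}{r}\Big)\Big]d\mu(u).
\]
This is a $\Gamma$-invariant kernel on $\DD\times\DD$. Because $d(z,u)\le D$ on the support of $K_a$, the triangle inequality gives $|d(z,w)-d(u,w)|\le D$. Taking $\chi$ with bounded derivative, the bracket is therefore bounded by $\|\chi'\|_\infty D/r$. This is the source of the factor $(D/r)^2$.

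Next we apply Lemma \ref{lem: truncated kernel HS-norm estimate}, or rather its proof, to $\widetilde K$. The truncation is slightly different: $\widetilde K(z,w)$ vanishes unless $d(z,w)\le r+D$, since both $\chi$ terms vanish when $d(u,w)>r$ and $d(z,w)>r+D$ simultaneously. The lemma's argument, with $r$ replaced by $r+D$, bounds $\|R_r\|_{\mathrm{HS}}^2$ by two terms. The first is $\int_\DDD\int_\DD|\widetilde K|^2$. The second is $\frac{e^{2(r+D)}}{l_{X_\Gamma}}\Vol\{\InjRad<r+D\}\sup|\widetilde K|^2$, or more precisely $\frac{e^{r+D}}{l}\Vol\{\cdots\}\sup_z\int_\DD|\widetilde K(z,w)|^2d\mu(w)$. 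We prefer to keep this $L^2$-in-$w$ version rather than the crude supremum, which is what produces the single factor $e^{r+D}$ in the claimed bound.

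Both terms reduce to a pointwise-in-$z$ estimate of $\int_\DD|\widetilde K(z,w)|^2d\mu(w)$. By Cauchy–Schwarz in $u$ over the ball $B(z,D)$, whose volume is $\lesssim e^{D}$,
\[
|\widetilde K(z,w)|^2\lesssim \Big(\frac Dr\Big)^2 e^{D}\sup|K_a|^2\int_{B(z,D)}|K_\rho(u,w)|^2d\mu(u).
\]
Integrating in $w$ and using Plancherel for the radial operator $\Op(\rho)$ gives $\int_\DD|K_\rho(u,w)|^2d\mu(w)=c\int_0^\infty|\rho|^2\lambda\tanh(2\pi\lambda)d\lambda$, uniformly in $u$. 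One more factor $e^D$ then comes from the $u$-integration. Altogether,
\[
\sup_z\int_\DD|\widetilde K(z,w)|^2d\mu(w)\lesssim \Big(\frac Dr\Big)^2 e^{2D}\sup|K_a|^2\|\rho\|^2_{L^2(\lambda\tanh)}.
\]
Integrating over $z\in\DDD$ gives the $\Vol(X_\Gamma)$ term, and the small-injectivity-radius part gives the second term.

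The main obstacle is bookkeeping in the non-injective region. We must check that the lattice-counting step in Lemma \ref{lem: truncated kernel HS-norm estimate} can be run with $\int_\DD|\widetilde K(z,\cdot)|^2$ instead of a supremum of $\widetilde K$. This is needed because $K_\rho$ is not compactly supported, so $\sup|\widetilde K|$ is not the natural quantity. We must also confirm that the exponential factors come out as $e^{2D}e^{r+D}$ rather than $e^{2(r+D)}$. If the sharp exponents fail, we would accept a slightly worse power of $e^{r}$, which is harmless in later applications where $r$ is fixed before $p\to\infty$.
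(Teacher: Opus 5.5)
Your proposal is correct and follows essentially the paper's argument. It uses the same splitting of the cutoff into $\chi(d(u,\gamma w)/r)$ plus the bracket, the mean-value bound $\|\chi'\|_\infty D/r$, the support condition $d(z,\gamma w)\le r+D$, Cauchy--Schwarz in $u$ over $B(z,D)$, the lattice count $\lesssim e^{r+D}/l_{X_\Gamma}$ in the non-injective region, and unfolding plus Plancherel for the radial kernel $K_\rho$.

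The only difference is bookkeeping. You package the remainder as the periodisation of a single $\Gamma$-invariant kernel $\widetilde K$ of propagation $r+D$, and rerun the proof of Lemma~\ref{lem: truncated kernel HS-norm estimate} using its intermediate $L^2$-in-$w$ bound. The paper instead applies Cauchy--Schwarz to the $\gamma$-sum directly, with the global count $1+\frac{e^{r+D}}{l_{X_\Gamma}}\mathds{1}_{\{\InjRad_{X_\Gamma}(z)<r+D\}}$. Either way the exponents come out exactly as $e^{2D}e^{r+D}$, so your worry about the sharp exponents resolves in your favour.
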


\begin{proof}
Recall that, 

\begin{equation}\label{eq:kernel_comp}
%\Op(a\rho)=\Op(a)\Op(\rho),\qquad
K_{a\rho}(z,w)=\int_{\mathbb D}K_a(z,u)\,K_\rho(u,w)\,d\mu(u).
\end{equation}
Substituting  \eqref{eq:kernel_comp} in \eqref{eq:trunc_period}, we have
\begin{align*}
K^{\Gamma,r}_{a\rho}(z,w)
&=\sum_{\gamma\in\Gamma}\int_{\mathbb D}K_a(z,y)K_\rho(y,\gamma w) \chi\left(\frac{d(z,\gamma w)}{r}\right)d\mu(y)\\
%&=\sum_{\gamma\in\Gamma}\int_{\mathbb D}K_a(z,y)K_\rho(y,\gamma w)\left[ \chi\left(\frac{d(y,\gamma w)}{r}\right)+ \left(\chi\left(\frac{d(z,\gamma w)}{r}\right)-\chi\left(\frac{d(y,\gamma w)}{r}\right)\right) \right]d\mu(y)\\
&=\sum_{\gamma\in \Gamma}\int K_a(z,y)K_\rho(y,\gamma w)
\chi\left(\frac{d(y,\gamma w)}{r}\right)d\mu(y)\\
&\qquad\qquad+\sum_{\gamma\in \Gamma}\int K_a(z,y)K_\rho(y,\gamma w)
\left[\chi\left(\frac{d(z,\gamma w)}{r}\right)-\chi\left(\frac{d(y,\gamma w)}{r}\right)\right]d\mu(y).
\end{align*}
The first term is exactly the kernel of $\Op_{\Gamma}(a)\Op_{\Gamma, r}(\rho)$. Now we look at the remainder term, which we call $R_r(a, \rho)$. Since $\chi$ is a smooth bump function, assuming $x_1=d(z, \gamma w)$ and $x_2=d(u, \gamma w)$, using the mean value theorem, we have

\[
\left|\chi\left(\frac{x_1}{r}\right)- \chi\left(\frac{x_2}{r}\right)\right|\leq   \frac{\|\chi'\|_\infty}{r}\,|x_1-x_2|
\]
Now using the bound for the distance function 
\[
|d(z, \gamma w)-d(y, \gamma w)|\leq d(z, y),
\]
we have 
\[
\left|\chi\left(\frac{d(z,\gamma w)}{r}\right)-\chi\left(\frac{d(y,\gamma w)}{r}\right)\right| \leq \frac{\|\chi'\|_\infty}{r}\,d(z,y).
\]
On the support of $K_a(z, y)$ we have $d(z,y)\leq D$ by \eqref{eq:finite_prop}, which implies
\begin{equation}\label{eq:chi_diff}
\left|\chi\left(\frac{d(z,\gamma w)}{r}\right)-\chi\left(\frac{d(y,\gamma w)}{r}\right)\right| \leq \frac{D}{r}\,\|\chi'\|_\infty.
\end{equation}
Moreover, if $K_a(z,y)\neq 0$ and $\chi(d(y,\gamma w)/r)\neq 0$, then
$d(y,\gamma w)\leq r$ which gives us $d(z,\gamma w)\leq r+D$. Therefore $K_{R_r(a, \rho)}(z,w)$ is supported in
$\{(z,w):\exists\gamma~~ \text{such that } d(z,\gamma w)\leq r+D\}$. Combining this with \eqref{eq:chi_diff}, we have
\begin{equation}\label{eq:KR_pointwise}
|K_{R_r(a, \rho)}(z,w)|\leq \frac{D}{r}\|\chi'\|_\infty
\sum_{\gamma\in\Gamma}\mathds{1}_{\{d(z,\gamma w)\le r+D\}}\int_{\DD}|K_a(z,y)||K_\rho(y,\gamma w)|
d\mu(y).
\end{equation}
Now we estimate the $\mathrm{HS}$-norm of the remainder operator. By definition, for $z, w\in \DDD$,
\begin{multline*}
    \|R_r(a, \rho)\|_{\mathrm{HS}(X_\Gamma)}^2=\int_\DDD\int_\DDD |K_{R_r(a, \rho)}(z,w)|^2\,d\mu(z)\,d\mu(w)\\
    \leq \left(\frac{D}{r}\right)^2\|\chi'_\infty\|^2 \int_\DDD\int_\DDD \left|\sum_\gamma \mathds{1}_{\{d(z,\gamma w)\le r+D\}} I_\gamma(z, w)\right|^2d\mu(z)d\mu(w),
\end{multline*}
where 
\[
I_\gamma(z, w):= \int_{\DD}|K_a(z,y)||K_\rho(y,\gamma w)|d\mu(y).
\]
Then, Cauchy-Schwarz on the $\gamma$-sum gives us
\begin{equation}\label{e:CSgamma}
\|R_r(a, \rho)\|_{\mathrm{HS}}^2\leq \left(\frac{D}{r}\right)^2\|\chi'\|_\infty^2 \int_\DDD\int_\DDD \#\{\gamma: d(z, \gamma w)\leq r+D\}\sum_\gamma|I_\gamma(z, w)|^2d\mu(z)d\mu(w) 
\end{equation}
Now, applying Cauchy-Schwarz in the integral defining $I_\gamma(z, w)$ gives
\[
|I_\gamma(z, w)|^2\leq \int_{y\in B(z, D)} |K_a(z, y)|^2dy \int_{y'\in B(z, D)} |K_\rho(y', \gamma w)|^2 dy'.
\]
%Since $K_a(z, y)=0$ whenever $d(z, y)>D$, for $z\in \DDD$
%\[
%\int_{\mathbb D}|K_a(z,y)|^2d\mu(y) \leq \Vol(B_{\mathbb D}(z,D))\sup_{(z, y)\in \DDD\times\DD}|K_a(z, y)| ^2 \lesssim e^{D}\sup_{\DDD\times %\DD}|K_a|^2.
%\]
For $z\in \DDD(r):=\{z\in \DDD:\mathrm{InjRad}_{X_\Gamma}(z)\ge r\}$ the $\gamma$-sum has at most one
contributing term, while on $\DDD(r)^c$ it has at most $O(e^{r+D}/l_{X_\Gamma})$ terms. So, globally
\[
\#\{\gamma: d(z, \gamma w)\leq r+D\}\leq 1+ \frac{e^{r+D}}{l_{X_\Gamma}}\mathds{1}_{\InjRad_{X_\Gamma}(z)\leq r+D}
\]
Injecting into \eqref{e:CSgamma}, we get 
\begin{align*}
&\|R_r(a, \rho)\|_{\mathrm{HS}}^2
\\ &\leq 
%\left(\frac{D}{r}\right)^2\|\chi'_\infty\|^2 \int_\DDD\int_\DDD \#\{\gamma: d(z, \gamma w)\leq r+D\}\sum_\gamma|I_\gamma(z, w)|^2d\mu(z)d\mu(w) \\
\left(\frac{D}{r}\right)^2\|\chi'_\infty\|^2 \int_\DDD\int_\DDD \left( 1+ \frac{e^{r+D}}{l_{X_\Gamma}}\mathds{1}_{\InjRad_{X_\Gamma}(z)\leq r+D} \right) \\
&\qquad\qquad\qquad\qquad\qquad\qquad 
 \times
 \int_{y\in B(z, D)} |K_a(z, y)|^2dy \sum_\gamma\int_{y'\in B(z, D)} |K_\rho(y', \gamma w)|^2 d\mu(y') d\mu(z)  d\mu(w)\\
%&\leq \left(\frac{D}{r}\right)^2\|\chi'_\infty\|^2  e^D\sup_{\DDD\times \DD} |K_a|^2 \int_{z\in\DDD}\int_{w\in\DDD} \left( 1+ \frac{e^{r+D}}{l_{X_\Gamma}}\mathds{1}_{\InjRad_{X_\Gamma}(z)\leq r+D} \right)\\
%&\qquad\qquad\qquad\qquad\qquad\qquad\qquad\qquad\qquad \times\left(\sum_\gamma\int_\DD |K_\rho(y, \gamma w)|^2dy\right)d\mu(w) d\mu(z)\\
&\leq \left(\frac{D}{r}\right)^2\|\chi'_\infty\|^2  e^D\sup_{\DDD\times \DD} |K_a|^2 \int_{z\in\DDD}\left( 1+ \frac{e^{r+D}}{l_{X_\Gamma}}\mathds{1}_{\InjRad_{X_\Gamma}(z)\leq r+D}\right)  \\
&\qquad\qquad\qquad\qquad\qquad\qquad\qquad\qquad\qquad 
 \times\int_{w\in\DDD}\left(\sum_\gamma\int_{y'\in B(z, D)} |K_\rho(y', \gamma w)|^2d\mu(y')\right)d\mu(w)\\
&\leq \left(\frac{D}{r}\right)^2\|\chi'_\infty\|^2  e^D\sup_{\DDD\times \DD} |K_a|^2 \left(\Vol(X_\Gamma)+\frac{e^{(r+D)}}{l_{X_\Gamma}}\Vol\{x\in X_\Gamma:\InjRad(x)<r+D\}\right)\\
&\qquad\qquad\qquad\qquad\qquad\qquad\qquad\qquad\qquad 
 \times \Vol(B(0, D))\int_{w\in\DD} |K_\rho(0, w)|^2d\mu(w)\\
&\leq \left(\frac{D}{r}\right)^2\|\chi'_\infty\|^2  e^{2D} \left(\int_0^\infty|\rho(\lambda)|^2\lambda\tanh(2\pi\lambda)d\lambda\right) \sup_{\DDD\times \DD} |K_a|^2
\\
&\qquad\qquad\qquad\qquad\qquad\qquad\qquad\qquad\times\left(\Vol(X_\Gamma)+\frac{e^{(r+D)}}{l_{X_\Gamma}}\Vol\{x\in X_\Gamma:\InjRad(x)<r+D\}\right).
\end{align*}
For the third inequality, we used the fact that 
\begin{align*}\sum_\gamma \int_{w\in\DDD}\int_{y'\in B(z, D)} |K_\rho(y', \gamma w)|^2d\mu(y')d\mu(w)
= \int_{y'\in B(z, D)}\int_{w\in\DD} |K_\rho(y',  w)|^2d\mu(y')d\mu(w)
\end{align*}
and the fact that $K_\rho(y',  w)$ depends only on the distance between $y'$ and $w$, so that $y'\mapsto \int_{w\in\DD} |K_\rho(y',  w)|^2d\mu(w)$ is constant.

\end{proof}

Now $\Op_{\Gamma, r}(\rho)$ in \eqref{eq:approx_mult} can be approximated with the spectral multiplier $\rho_{X_\Gamma}$, which introduces the other error term.

\begin{Lemma}\label{lem:trunc-rho multiplier approx}
Let $\psi_j$ be an $L^2$-eigenfunction of $\Delta_{X_\Gamma}$ corresponding to the eigenvalue $\nu_j=\frac{1}{4}+\lambda_j^2$. Then
\[
\Op_{\Gamma, r}(\rho)\psi_j= \rho(\lambda_j)\psi_j + E_{r, \rho}(\lambda_j)\psi_j,\quad\text{ where } \quad E_{r, \rho}(\lambda_j)=\mathcal{O}_{\rho}\left(\frac{1}{(1+r)^2}\right).
\]
\end{Lemma}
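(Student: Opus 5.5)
The plan is to reduce the statement to a one-dimensional computation with spherical functions, via the Selberg transform. The kernel of $\Op_{\Gamma,r}(\rho)$ is
\[
K^{\Gamma,r}_\rho(z,w)=\sum_\gamma k_\rho(d(z,\gamma w))\chi\bigl(d(z,\gamma w)/r\bigr),
\]
which is the periodisation of the radial kernel $k_{\rho,r}(t):=k_\rho(t)\chi(t/r)$. This truncated kernel is compactly supported (in $[0,r]$), so the periodisation is a finite sum and the operator is well defined on $X_\Gamma$. For a compactly supported radial kernel, the Selberg pretrace principle gives the following: acting on an eigenfunction $\psi_j$ (lifted to a $\Gamma$-invariant eigenfunction on $\DD$ with parameter $\lambda_j$), unfolding the sum over $\gamma$ yields
\[
\Op_{\Gamma,r}(\rho)\psi_j(z)=\int_\DD k_{\rho,r}(d(z,w))\psi_j(w)\,d\mu(w).
\]
Averaging $\psi_j$ over circles around $z$ produces $\psi_j(z)\varphi_{\lambda_j}(d(z,w))$, by the mean value property of eigenfunctions. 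Hence $\Op_{\Gamma,r}(\rho)\psi_j=h_r(\lambda_j)\psi_j$, where
\[
h_r(\lambda)=2\pi\int_0^\infty k_\rho(t)\chi(t/r)\varphi_\lambda(t)\sinh t\,dt,
\]
with the normalisation fixed by item (3) of Proposition \ref{p:spherical_rho}. Item (3) also gives $\rho(\lambda)$ as the same integral without $\chi$. Therefore
\[
E_{r,\rho}(\lambda_j)=-\int_0^\infty k_\rho(t)\bigl(1-\chi(t/r)\bigr)\varphi_{\lambda_j}(t)\sinh t\,dt,
\]
and we take $\chi\equiv1$ on $[0,r/2]$, say, so the integrand is supported on $t\ge r/2$.

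It remains to bound this tail, and this is the main obstacle. The difficulty is that $|\varphi_\lambda(t)|\sinh t\sim e^{t/2}$ grows, so we need quantitative decay of $k_\rho$. Since $\rho$ is smooth, even and compactly supported, $k_\rho$ is its inverse spherical transform, and we would use the large-$t$ asymptotics $\varphi_\lambda(t)=e^{-t/2}\bigl(c(\lambda)e^{i\lambda t}+c(-\lambda)e^{-i\lambda t}\bigr)+O(e^{-3t/2})$ with smooth Harish-Chandra coefficients. With $\lambda\tanh(2\pi\lambda)|c(\lambda)|^{-2}$ smooth, this gives that $e^{t/2}k_\rho(t)$ is essentially the Fourier transform of a smooth compactly supported function. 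Repeated integration by parts in $\lambda$ then shows $e^{t/2}|k_\rho(t)|\le C_{\rho,N}(1+t)^{-N}$; the error term in the asymptotics is harmless because it carries an extra $e^{-t}$. For the eigenvalue factor, $\lambda_j$ is real (the eigenvalue lies in the range $\nu_j>1/4$ of interest) or $\lambda_j$ is imaginary, and in either case $|\varphi_{\lambda_j}(t)|\sinh t\lesssim (1+t)e^{t/2}$ in the tempered case. Hence
\[
|E_{r,\rho}(\lambda_j)|\lesssim\int_{r/2}^\infty(1+t)^{1-N}\,dt=O_\rho\bigl((1+r)^{-2}\bigr)
\]
already for $N=4$.

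If the Fourier-type decay argument turns out to be awkward, the fallback is a cruder route. Apply $(\Delta+\tfrac14+\lambda^2)$-type integration by parts twice in the Selberg integral, using the fact that $1-\chi(t/r)$ has derivatives of size $r^{-1}$ and $r^{-2}$; this would be precisely the source of the stated rate $(1+r)^{-2}$. Under this reading the constant depends on $\rho$ through finitely many derivatives, and uniformly for $\lambda_j$ in a compact set, which is all that Proposition \ref{thm: truncated periodised Op approximation} needs.
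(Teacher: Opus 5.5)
Your main argument is correct and is essentially the paper's proof. You unfold the finite periodised sum to the radial kernel $k_\rho(t)\chi(t/r)$ on $\mathbb{D}$, read off the eigenvalue as its Selberg transform, and subtract $\rho(\lambda_j)=\mathcal{S}(k_\rho)(\lambda_j)$. You then bound the tail using $|k_\rho(t)|\lesssim e^{-t/2}(1+t)^{-N}$ together with $|\varphi_\lambda(t)|\lesssim(1+t)e^{-t/2}$ and $N=4$. The decay estimate is the paper's Appendix~B, proved by the same Harish-Chandra expansion and integration by parts in $\lambda$ that you sketch. Your fallback paragraph is therefore unnecessary, since taking $N$ larger gives any polynomial rate.

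Your explicit normalisation $\chi\equiv1$ on $[0,\tfrac12]$ is genuinely needed; the paper's tail bound $\int_r^\infty$ tacitly presupposes such a condition.

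One correction: the bound $|\varphi_{\lambda_j}(t)|\sinh t\lesssim(1+t)e^{t/2}$ holds only for real $\lambda_j$, not ``in either case''. For $\lambda_j=is$ with $0<s\le\tfrac12$ one has $\varphi_{is}(t)\asymp e^{(s-1/2)t}$ (for example $\varphi_{i/2}\equiv1$ for the constant eigenfunction), and $\rho(\lambda_j)$ is not even defined. The lemma must be read, as the paper's proof implicitly does, for $\lambda_j\in\mathbb{R}$.
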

\begin{proof}

 %{\color{blue}Then lifting $\psi_j$ to $\mathbb D$ gives a generalised eigenfunction of $\Delta_{\mathbb D}$ with the same eigenvalue.}  
Let $\tilde{\psi_j}(z)=\psi_j(\pi(z))$ be the lift of $\psi_j$ on $\DD$, where $\pi:\DD\to X_\Gamma$ is the covering map. Then $\tilde{\psi_j}$ is $\Gamma$-invariant and satisfies
\[
\tilde{\psi_j}(\gamma w)=\tilde{\psi_j}(w)\qquad (\gamma\in\Gamma),
\]
%By definition,
%\[ 
%K_\rho(z,w)=\frac{1}{2\pi}\int_{\mathbb \RR^+\times B}\rho(\lambda)e^{(\frac{1}{2}+i\lambda)\langle z,b\rangle}
%e^{(\frac{1}{2}-i\lambda)\langle w,b\rangle} \lambda\tanh(2\pi\lambda)\,db\,d\lambda,
%%\]
Recall from Proposition \ref{p:spherical_rho} that the kernel $K_\rho$ of $\Op(\rho))$ is radial, i.e. it is of the form $K_\rho(z, w)= k_\rho(d(z,w))$. Recall that
\[
K^{\Gamma,r}_{\rho}(x,y):=\sum_{\gamma\in\Gamma}K_{\rho}(z,\gamma w)
\chi\left(\frac{d(z,\gamma w)}{r}\right),
\]
where $z, w\in \DDD$ are the lifts of $x, y\in X_\Gamma$ ($x=\pi(z), y=\pi(w)$). Then
\[
\Op_{\Gamma, r}(\rho)\psi_j(x):= \int_{X_\Gamma} K_{\rho}^{\Gamma, r}(x, y)\psi_j(y)d\mu(y)=\int_\DDD \sum_{\gamma\in\Gamma}K_{\rho}(z,\gamma w)
\chi\left(\frac{d(z,\gamma w)}{r}\right)\tilde{\psi}_j(w)d\mu(w).
\]
Since only finitely many $\gamma$ contribute to the sum when $d(z, \gamma w)\leq r$, the $\Gamma$-sum is finite and can be interchanged with the integral.
\[
\Op_{\Gamma, r}(\rho)\psi_j(x)=\sum_{\gamma\in\Gamma}\int_\DDD K_{\rho}(z,\gamma w) \chi\left(\frac{d(z,\gamma w)}{r}\right)\tilde{\psi}_j(w)d\mu(w).
\]
Writing $w'=\gamma w$ with $w\in \DDD$ and using the fact that $d\mu(w)=d\mu(w')$ along with $\tilde{\psi}_j(\gamma w)=\tilde{\psi}(w)$, we have
\begin{align*}
    \Op_{\Gamma, r}(\rho)\psi_j(x)=\int_\DD K_{\rho}(z,w) \chi\left(\frac{d(z,w)}{r}\right)\tilde{\psi}_j(w)d\mu(w)
\end{align*}
Defining $k_{\rho, r}(t)=k_\rho(t)\chi(t/r)$, we have $K_{\rho}(z,w) \chi\left(\frac{d(z,w)}{r}\right)=k_{\rho, r}(d(z,w))$ and 
\[
(\Op_{\Gamma, r}(\rho)\psi_j)\circ \pi (z)=\int_\DD k_{\rho, r}(d(z, w))\tilde{\psi}_j(w)d\mu(w)=h_{\rho, r}(\lambda_j) \tilde{\psi}_j(z),
\]
where $h_{\rho, r}(\lambda)=\mathcal{S}(k_{\rho, r})(\lambda)$ is given by the spherical transform, as in Proposition \ref{prop: radial integral op}. Then projecting back down to $X_\Gamma$, we have
\[
\Op_{\Gamma, r}(\rho)\psi_j(x)=h_{\rho, r}(\lambda_j) {\psi}_j(x).
\]
Moreover, by the definition of $\Op(\rho)$ on $\DD$ and the fact that $\Op(\rho)\psi=\rho(\lambda)\psi$ for generalised eigenfunctions on $\DD$, we have 
\[
\mathcal{S}(k_\rho)(\lambda)=\rho(\lambda).
\]
Now, $h_{\rho, r}(\lambda)$ can be written in terms of the spherical function $\varphi_{\lambda}$ as 
\begin{align*}
    h_{\rho, r}(\lambda)&= \mathcal{S}(k_{\rho, r})(\lambda)=\int_0^\infty k_{\rho, r}(t)\varphi_{\lambda}(t)\sinh{t}\, dt= \int_0^\infty k_{\rho}(t)\chi\left(\frac{t}{r}\right)\varphi_{\lambda}(t)\sinh{t}\, dt\\
    &= \int_0^\infty \left(k_\rho(t)-  k_{\rho}(t)\left(1-\chi\left(\frac{t}{r}\right)\right)\right)\varphi_{\lambda}(t)\sinh{t}\, dt\\
    &= \rho(\lambda)-\int_0^\infty k_\rho(t) \left(1-\chi\left(\frac{t}{r}\right)\right) \varphi_\lambda(t) \sinh{t}\, dt=: \rho(\lambda)+ E_{r, \rho}(\lambda_j)
\end{align*}
Then,
\begin{align*}
 |E_{r, \rho}(\lambda_j)| = |h_{\rho, r}(\lambda)-\rho(\lambda)|&= \left|\int_0^\infty  -k_\rho(t) \left(1-\chi\left(\frac{t}{r}\right)\right)\varphi_{\lambda}(t)\sinh{t}\, dt \right|\leq \int_r^\infty \left| k_\rho(t) \varphi_{\lambda}(t)\sinh{t} \right|\, dt. \\
\end{align*}
It is known that $|\varphi_\lambda|\leq Ce^{-t/2}(1+t)$ (see Equation (3.1.5) from Chapter 5 of \cite{bray1994fourier}) and $\sinh{t}\leq e^{t}/2$. Moreover, the required decay of $k_\rho(t)$ is proved in Appendix \ref{app: radial kernel decay}, where we show that for every $N\geq 0$,
\[
|k_\rho(t)|\lesssim e^{-t/2}(1+t)^{-N},
\]
for any $t$. So, for large $t\geq 1$, %So, assuming $t\geq 1$, we have $1+t\le 2t$, hence
\begin{align*}
|h_{\rho, r}(\lambda_j)-\rho(\lambda_j)|&\leq \int_r^\infty \left| k_\rho(t) \varphi_{\lambda_j}\sinh{t} \right|\, dt \lesssim  \int_r^\infty C_{N, \rho}\frac{e^{-t/2}}{t^N} \, e^{-t/2}(1+t) \, e^t dt\\
&\leq \int_r^\infty C_{N, \rho} \frac{1}{(1+t)^N}(1+t)dt=\mathcal{O}_{N, \rho}\left(\frac{1}{(1+r)^{N-2}}\right)
\end{align*}
for every $N$. Choosing $N=4$ suffices for our purpose.
\end{proof}

\section{Estimate on the quantum variance of hyperbolic surfaces}\label{sec: quant estimate}

Let $X_\Gamma:=\Gamma \backslash \DD$ with fundamental domain $\DDD\subset \DD$. Given any operator $A$ on $L^2(X_\Gamma)$ that satisfies the conditions in Theorem \ref{thm: main_theorem}, we define its ``quantum variance'' (on some fixed interval $J$) as 
\[
\mathrm{Var}_J(A):=\frac{1}{N(X_\Gamma, J)}\sum_{j:\nu_j\in J} \left |\langle \psi_j,  A\psi_j \rangle-\frac{1}{\Vol(SX_\Gamma)}\int_{SX_\Gamma}a(z,\lambda_j, b)e^{\langle z,b \rangle}d\mu(z)db\right|^2,
\]
where $N(X_\Gamma, J)$ denotes the number of eigenvalues contained in $J$. 

The aim of this section is to prove a quantitative estimate for $\mathrm{Var}_J(A)$. We first establish this bound under the additional assumption that
\[
\int_{SX_\Gamma}a(z,b, \lambda)e^{\langle z,b \rangle}d\mu(z)db=0
\]
 for any $\lambda\in I$. We will indicate in Section \ref{sec: proof_main_thm} how to deal with the general case where this assumption is not satisfied, and also prove the quantum ergodicity result, i.e, Theorem \ref{thm: main_theorem}.

\begin{note}
    We shall use $I, J$ to denote the support interval of the parameters $\lambda, \nu$ (respectively), where the parameters are related as $\nu=\frac{1}{4}+\lambda^2$.
\end{note}

The estimate is obtained by following these steps.
\begin{itemize}
    \item[(i)] (Introducing a radial propagator and an averaging operator)
    We introduce the smooth radial integral operator $P_{t}$ and define the averaging operator 
    \[
    \overline{A}_T:=\frac{1}{T}\int_0^T P_tAP_t dt.
    \]
    We use the explicit formula for its symbol $\overline{a}_T$ and kernel $K_{\overline{A}_T}$ obtained in Subsection \ref{subsec: radial prop operator P_t} to provide an upper bound for  $\|K_{\overline{A}_T}\|^2_{L^2(\DDD\times \DD)}$. %$\mathrm{HS}$-norm of $\overline{A}_T$. 
    Although $A$ is originally defined on $X_\Gamma$, we can identify it with its lift on $\DD$, whose kernel is $\Gamma$-invariant. Accordingly, most of the computations will be carried out on $\DD$, and we will indicate when we pass back to the quotient $X_\Gamma$.
     
    \item[(ii)] (Square integrability of $K_{\overline{A}_T}$ and Nevo's ergodic theorem) %Under the assumptions on $A$ from Theorem \ref{thm: main_theorem}, bounding $\|\overline{A}_T\|^2_{\mathrm{HS}(X_\Gamma)}$ is equivalent to estimating $\|K_{\overline{A}_T}\|^2_{L^2(\DDD\times \DD)}$. 
    This key estimate comes from rewriting the integral in suitable group coordinates and using the zero-average assumption on the symbol in its angular variable (assumption \ref{condition A1}). This allows us to view the resulting integral as a family of averaging (or ``convolution'') operators on $\mathrm{PSU}(1,1)$ applied to a function $f$ involving only the $\theta$-derivative of the symbol. Nevo's ergodic theorem used in Subsection \ref{subsec: Hs-norm of avg op.} precisely provides the necessary $L^2$-decay of these convolution operators, from which we deduce that
    \[
    \iint_{\DDD\times \DD}|K_{\overline{A}_T}(z, w)|^2\leq \frac{C}{T}\frac{\Vol(SX_\Gamma)}{\alpha(\lambda_1)},
    \]
    where $\alpha(\lambda_1)$ depends on the spectral gap of $X_\Gamma$, and $C$ depends on a uniform control of the relevant angular derivatives of the symbol. Establishing such a uniform control over $\lambda\in \RR^+$ would require strong assumptions on our symbol -- preventing us, for instance, from applying the result to differential operators. Since we are only interested in a fixed compact spectral interval $\nu\in J$ (equivalently, $\lambda\in I$), such stronger assumptions can be avoided by introducing a spectral cut-off for our symbol $a(z, \lambda, b)$. The important point for us is that the overall argument and use of Nevo's theorem produce decay in $T$, which is the key input for the variance bound.

    \item[(iii)] (Square integrability of $K_{\overline{A'}_T}$) The next step is to localise the symbol $a(z, \lambda, b)$ spectrally to our interval of interest $\lambda\in I$. For this, we choose a smooth cut-off $\rho$ which is supported in a slightly larger interval $I'$ and is equal to $1$ on $I$, and define
    \[
    \overline{A'}_T= \Op(\overline{a}_T\rho).
    \]
    Even though $K_{\overline{A}_T}$ has finite propagation, $K_{\overline{A'}_T}$ does not. So we invoke the Hilbert-Schmidt estimate from Lemma \ref{lem: truncated kernel HS-norm estimate} for the geometrically truncated operator $\Op_{\Gamma, r}(\overline{a}_T\rho)$, whose kernel is now supported within a distance $r$ from the diagonal. This leads to the following estimate in Subsection \ref{subsec: square-integ of trunc avg op}
     \[
    \iint_{\DDD\times \DD}|K_{\overline{A'}_T}(z, w)|^2\leq \frac{C_I}{T}\frac{\Vol(SX_\Gamma)}{\alpha(\lambda_1)},
    \]
    where $C_I$ is a bounded quantity depending only on $I$.
    
    \item[(iv)] (Quantum variance estimate) Finally, we return to the definition of the quantum variance and replace $A$ by the averaged and spectrally localised operator constructed above. More precisely, in Subsection \ref{subsec: estimating QV}, we use Proposition \ref{prop: MS Prop 2} to see that 
    \[
    \sum_{j:\nu_j\in J}  |\langle \psi_j,  A\psi_j \rangle|^2 \leq \frac{1}{C_I^2} \sum_{j=0}^\infty \left|\left\langle \psi_j, \left(\frac{1}{T}\int_0^TP_tAP_t\, dt\right)\rho_{X_\Gamma}\psi_j \right\rangle\right|.
    \]
    Using Proposition \ref{thm: truncated periodised Op approximation}, we use the approximation of $\Op_{\Gamma, r}(a\rho)$ with $\Op_\Gamma(a)\rho_{X_\Gamma}$,
    \[
    \frac{1}{C_I^2} \sum_{j=0}^\infty \left|\left\langle \psi_j, \overline{a}_T\rho_{X_\Gamma}\psi_j \right\rangle\right| \lesssim_I \|\Op_{\Gamma, r}(\overline{a}_T\rho)\|^2_{\mathrm{HS}}+ 2\left(\| \Op_\Gamma(\overline{a}_T)\|^2_{\mathrm{HS}}|E_{r, \rho}(\lambda_j)|+\|R_r(\overline{a}_T, \rho)\|^2_{\mathrm{HS}}\right).
    \]
    The main term in the above inequality is then controlled by the Hilbert–Schmidt bound obtained in the previous step. The remaining error terms arising from the spectral cut-off and the truncation procedure are dealt with using Lemma \ref{lem:approx_trunc} and Lemma \ref{lem:trunc-rho multiplier approx}. 
\end{itemize}

%\subsection{Symbol of $P_tAP_t$ and boundedness of its kernel} %Unlike \cite{MassonSahlsten17}, we will use the symbol of the operator instead of using the kernel directly and make appropriate changes. %We now find the symbol of the operator $P_tAP_t$. 

\subsection{Radial propagation operator and the averaging kernel}\label{subsec: radial prop operator P_t}
For fixed $\sigma>0$, let $\chi_{t, \sigma}(r):[0, \infty)\to \RR$ be a smooth decreasing function such that $\chi_{t, \sigma}(r)=1$ for $r\leq t-\sigma$, $\chi_{t, \sigma}(r)=0$ for $r> t$ and $0\leq \chi_{t, \sigma} \leq 1$. %In order to demonstrate the $\epsilon$-dependence of $\chi_\epsilon$ explicitly, 
%In particular, for $\epsilon<\epsilon_0$ (choice of $\epsilon_0$ to be determined later)
\iffalse
In particular,
\[
\chi_{t, \sigma}(r):=\eta\left(\frac{r-t}{\sigma}\right),
\]
where $\eta\in C^\infty(\RR)$ is some decreasing function such that $\eta=1$ on $(-\infty, -1]$ and $\eta=0$ on $[0, \infty)$.
\fi
Define $k_{t, \sigma}(r):= \frac{1}{\sqrt{\cosh{t}}}\chi_{t, \sigma}(r)$ and the corresponding smooth radial operator on $\DD$
\[
P_{t, \sigma}u(z):=\int_\DD K_{t, \sigma}(z, w)u(w)d\mu(w) = \int_\DD k_{t,  \sigma}(d_\DD(z, w)) u(w)d\mu(w).
\]
Moreover, consider the sharp radial operator (used in \cite{MassonSahlsten17}) 
\[
P_t^\sharp(u(z)):= \frac{1}{\sqrt{\cosh{t}}} \int_{B(z,t)} u(w)d\mu(w), ~~~~~~z\in \DD,
\]
with kernel 
\[
K_t^\sharp(z, w):= k_t^\sharp(d_\DD(z, w)), \quad \text{ where } \quad k_t^\sharp(r):= \frac{1}{\sqrt{\cosh{t}}} \mathds{1}_{\{r\leq t\}}.
\]

Since the observable in \cite{MassonSahlsten17} is multiplication by a bounded function $a$ whose kernel is a Dirac mass, choosing the step radial operator $P_t^\sharp$, the propagation $P_t^\sharp aP_t^\sharp$ immediately produces an operator with a square-integrable kernel. In contrast, while working with differential operators, the kernel involves derivatives of the Dirac distribution, and the interaction of these singularities with the sharp propagation operator introduces additional technical difficulties that do not arise in the \cite{MassonSahlsten17} setting. So, instead of using a sharp propagator as in \cite{MassonSahlsten17}, we use the smooth $P_{t,\sigma}$ which preserves similar spectral information while making the computations a bit nicer.
 
 $P_{t, \sigma}, P_t^\sharp$ being radial integral operators, they behave as spectral multipliers. In other words, we have the following converse of Proposition \ref{p:spherical_rho} (also see \cite{Iwaniec02}, Theorem 1.14):

\begin{proposition}\label{prop: radial integral op}
Let $X=\Gamma\backslash\mathbb{D}$ be a hyperbolic surface. Let $k:[0,+\infty)\to\mathbb{C}$ be a smooth function with compact support. If $\psi_\nu$ is an eigenfunction of the Laplacian on $X$ of eigenvalue $\nu$, then it is an eigenfunction of the radial integral operator $A$ associated to $k$. That is,
\begin{equation}\label{eq: radial_op_Selberg_relation}
    A\psi_\nu(z)=\int k(d(z,w))\psi_\nu(w)d\mu(w)=h(\lambda)\psi_\nu(z),
\end{equation}
where the eigenvalue $h(\lambda)$ is given by the Selberg transform of the kernel $k$:
\[
h(\lambda)=\mathcal{S}(k)(\lambda),
\]
and $\lambda\in\mathbb{C}$ is defined by the equation $\nu=\frac{1}{4}+\lambda^2$.
\end{proposition}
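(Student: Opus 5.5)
We argue on the universal cover. Let $\tilde\psi_\nu=\psi_\nu\circ\pi$ be the $\Gamma$-invariant lift of $\psi_\nu$ to $\DD$. It is a smooth function satisfying $-\Delta_\DD\tilde\psi_\nu=\nu\tilde\psi_\nu$. Since $k$ has compact support, say in $[0,R]$, the integral defining $A\psi_\nu$ on $X$ is, after unfolding (exactly as in the proof of Lemma \ref{lem:trunc-rho multiplier approx}), the integral $\int_\DD k(d(z,w))\tilde\psi_\nu(w)\,d\mu(w)$ over the ball $B(z,R)$. This involves only finitely many translates of the fundamental domain, so no summability issue arises. It therefore suffices to show that for every smooth eigenfunction $f$ of $\Delta_\DD$ with eigenvalue $\nu$ and every $z\in\DD$,
\[
\int_\DD k(d(z,w))f(w)\,d\mu(w)=h(\lambda)f(z).
\]

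\textbf{Step 1: reduction to the radial average.} Using the isometry group $G$, we may move $z$ to the origin $0$; this preserves both the eigen-equation and the radial kernel. In geodesic polar coordinates around $0$ we have $d\mu=\sinh t\,dt\,d\theta$, so the integral becomes
\[
\int_0^\infty k(t)\,2\pi\,(M_tf)(0)\,\sinh t\,dt,
\]
where $M_tf(0)=\frac{1}{2\pi}\int_0^{2\pi}f(t,\theta)\,d\theta$ is the circle average of $f$ at radius $t$.

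\textbf{Step 2: the radial average is a multiple of the spherical function.} This is the key step. The function $F(w)=\frac{1}{2\pi}\int_K f(kw)\,dk$ is radial, because $K$ fixes $0$. It is an eigenfunction with eigenvalue $\nu$, because $\Delta$ commutes with rotations, and it satisfies $F(0)=f(0)$. A radial eigenfunction satisfies the ODE
\[
F''+\coth t\,F'+\nu F=0,
\]
which has a regular singular point at $t=0$. Among its solutions, the smooth one is unique up to scalar; the other solution has a logarithmic singularity at $t=0$. The spherical function $\varphi_\lambda(t)$ from Proposition \ref{p:spherical_rho} is smooth, radial, and an eigenfunction with $\varphi_\lambda(0)=1$. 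Indeed, by definition it is an average over $b$ of the generalised eigenfunctions $e^{(\frac12+i\lambda)\langle u,b\rangle}$, and at $u=0$ the integrand is $1$. Hence
\[
M_tf(0)=f(0)\,\varphi_\lambda(t).
\]

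\textbf{Step 3: conclusion.} Substituting Step 2 into Step 1 gives $h(\lambda)f(0)$, with
\[
h(\lambda)=2\pi\int_0^\infty k(t)\varphi_\lambda(t)\sinh t\,dt.
\]
This is the Selberg transform $\mathcal S(k)$, matching Proposition \ref{p:spherical_rho}(3) up to the normalisation of the $2\pi$ used there. The point $z$ was arbitrary, so the identity holds everywhere; projecting back to $X$ yields \eqref{eq: radial_op_Selberg_relation}.

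For complex $\lambda$ (small eigenvalues, $\nu<1/4$), the same argument applies verbatim, since the ODE and the definition of $\varphi_\lambda$ make sense for all $\lambda\in\CC$. The main obstacle is the uniqueness claim in Step 2. I would handle it by the Frobenius analysis at $t=0$: the indicial equation has the double root $0$, so one solution is analytic and the other behaves like $\log t$, which is excluded for a smooth function. Alternatively, one can cite \cite{Iwaniec02}, Theorem 1.14, directly.
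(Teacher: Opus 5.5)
Your proof is correct. The paper gives no argument of its own: it calls the statement a converse of Proposition \ref{p:spherical_rho} and cites \cite{Iwaniec02}, Theorem 1.14. Your argument (unfold to $\DD$, radialize the lifted eigenfunction about the point, identify the $K$-average with $f(0)\varphi_\lambda$ by uniqueness of the solution that is regular at the double indicial root $0$, then integrate in polar coordinates) is the standard radialization proof of that cited result, which Iwaniec carries out in the half-plane.

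Your constant $h(\lambda)=2\pi\int_0^\infty k(t)\varphi_\lambda(t)\sinh t\,dt$ is the right one for $d\mu=\sinh t\,dt\,d\theta$. The paper is itself inconsistent about this factor of $2\pi$: it is absent in Proposition \ref{p:spherical_rho}(3) and in the proof of Lemma \ref{lem:trunc-rho multiplier approx}, but present in the formula for $h_t$ in Section \ref{sec: proof_main_thm}. So your caveat about the normalisation is warranted.
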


Let $h_t^\sharp, h_{t,\sigma}$ denote the corresponding Selberg transform of $k_t^\sharp, k_{t, \sigma}$ respectively. From Proposition 4.2 of \cite{MassonSahlsten17} we know that for any fixed compact interval $J\subset (1/4, \infty)$, there exists constants $C_I, T_I>0$ such that for  $\lambda\in I$ with $\nu=1/4+\lambda^2\in J$ and for all $T\geq T_I$ we have
\begin{equation}\label{ineq: LeMasson-Sahlsten Prop 4.2}
    \frac{1}{T}\int_0^T h_t^\sharp(\lambda)^2 dt \geq C^*_I.
\end{equation}

We can show a similar result for $P_{t, \sigma}$ and $h_{t, \sigma}$ as follows.

\begin{proposition}\label{prop: MS Prop 2}
    For any fixed compact interval $J\subset (1/4, \infty)$, there exists constants $\sigma_0, C(I,\sigma), T(I, \sigma)>0$ such that for $\lambda\in I$ with $\nu=1/4+\lambda^2\in J$ and for all $T\geq T(I, \sigma)$ we have
    $$\frac{1}{T}\int_0^T h_{t, \sigma}(\lambda)^2 dt \geq C(I, \sigma), \quad \text{ for every } \sigma<\sigma_0.$$
\end{proposition}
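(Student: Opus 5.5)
We will prove Proposition \ref{prop: MS Prop 2} by comparing $h_{t,\sigma}$ with $h_t^\sharp$ and transferring the lower bound \eqref{ineq: LeMasson-Sahlsten Prop 4.2} of Le Masson--Sahlsten.

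By linearity of the Selberg transform and Proposition \ref{prop: radial integral op},
\[
h_{t,\sigma}(\lambda)-h_t^\sharp(\lambda)=\frac{1}{\sqrt{\cosh t}}\int_{t-\sigma}^{t}\big(\chi_{t,\sigma}(r)-1\big)\varphi_\lambda(r)\sinh r\,dr ,
\]
since $k_{t,\sigma}$ and $k_t^\sharp$ coincide outside the annulus $t-\sigma< r\le t$. We will use the bound $|\varphi_\lambda(r)|\le Ce^{-r/2}(1+r)$, uniform in $\lambda\in I$, quoted earlier from \cite{bray1994fourier}. Together with $\sinh r\le e^r/2$, $0\le 1-\chi_{t,\sigma}\le 1$ and $\cosh t\ge e^t/2$, this gives
\[
|h_{t,\sigma}(\lambda)-h_t^\sharp(\lambda)|\lesssim e^{-t/2}\int_{t-\sigma}^t e^{r/2}(1+r)\,dr\lesssim \sigma(1+t).
\]
This bound grows in $t$, so it is too crude: the $(1+t)$ factor must be removed before averaging over $[0,T]$.

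To remove it, we will use the asymptotic expansion (Harish-Chandra) of $\varphi_\lambda$ for real $\lambda\neq 0$:
\[
\varphi_\lambda(r)=e^{-r/2}\big(c(\lambda)e^{i\lambda r}+\overline{c(\lambda)}e^{-i\lambda r}\big)+O\big(e^{-3r/2}\big),\qquad r\ge1,
\]
which holds uniformly for $\lambda$ in the compact set $I\subset(0,\infty)$. Since $J\subset(1/4,\infty)$, $\lambda$ stays away from $0$, and for such $\lambda$ one has $|\varphi_\lambda(r)|\le C_I e^{-r/2}$ for all $r\ge0$. Redoing the estimate above with this bound gives $|h_{t,\sigma}-h_t^\sharp|\le C_I'\sigma$ for all $t\ge0$ and $\lambda\in I$. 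For $t<\sigma$ the function $\chi_{t,\sigma}$ is not really defined; we will either restrict to $t\ge 1$ or handle $t\le1$ by trivial boundedness, which contributes only $O(1/T)$ to the average.

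We then conclude as follows. Both $h_{t,\sigma}$ and $h_t^\sharp$ are uniformly bounded for $\lambda\in I$, by $M_I$ say, by the same computation (the integrand $e^{-t/2}e^{r/2}$ integrated over $[0,t]$ is $O(1)$). Writing $h_{t,\sigma}^2-(h_t^\sharp)^2=(h_{t,\sigma}-h^\sharp_t)(h_{t,\sigma}+h^\sharp_t)$, we get
\[
\frac1T\int_0^T h_{t,\sigma}(\lambda)^2\,dt\ \ge\ \frac1T\int_0^T h_t^\sharp(\lambda)^2\,dt-2M_IC_I'\sigma-\frac{C}{T}\ \ge\ C_I^*-2M_IC_I'\sigma-\frac{C}{T}.
\]
Choosing $\sigma_0=C_I^*/(4M_IC_I')$ and $T(I,\sigma)\ge T_I$ large enough that $C/T\le C_I^*/4$, we obtain the conclusion with $C(I,\sigma)=C_I^*/4$, which is in fact independent of $\sigma$. (If $h$ takes complex values, we work with $|h|^2$; here the kernels are real and $\lambda$ is real, so $h$ is real.)

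The main obstacle is the uniform pointwise bound $|\varphi_\lambda(r)|\le C_Ie^{-r/2}$ without the $(1+r)$ loss for $\lambda\in I$. We will justify it from the integral representation of $\varphi_\lambda$ via the $c$-function expansion, using that $\lambda$ is bounded away from $0$.
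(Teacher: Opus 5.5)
Your proposal is correct and follows essentially the same route as the paper. The paper also bounds $h_{t,\sigma}-h_t^\sharp$ uniformly by $C_I(1-e^{-\sigma/2})$ for $\lambda\in I$ and then transfers the Le Masson--Sahlsten lower bound; it differs only in two places:

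1. It reaches the difference through the Abel transform. There the inner integral $\int_0^r \cos(\lambda u)\,(\cosh r-\cosh u)^{-1/2}\,du$ is, by Mehler's formula, a constant multiple of $\varphi_\lambda(r)$. The paper shows it is $O_I(e^{-r/2})$ by integrating by parts in $u$, rather than through the Harish-Chandra $c$-function expansion you invoke.
2. It concludes with the reverse triangle inequality in $L^2([1,T],dt/T)$ instead of your difference-of-squares argument, which avoids needing a bound on $h$ uniform in $t$.
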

See Appendix \ref{app: proof of prop 3.3} for a detailed proof of the above proposition.
\begin{note}
    For notational convenience, moving forward, we shall fix some $\sigma<\sigma_0$ and drop it from our notations, defining the smooth operator $P_t:=P_{t,\sigma}$ supported on the ball $B(z, t)$ with its kernel $K_{t, \sigma}, k_{t, \sigma}$ denoted as $K_t, k_t$ (respectively) and the Selberg transform $h_{t, \sigma}$ as $h_t$.
\end{note}

\iffalse
\begin{note}
    For the remainder of this paper, with slight abuse of notation, we shall use $I$ to denote the support interval of both the parameters $\lambda$ as well as $\nu$, making the context clear wherever necessary.
\end{note}
\fi

We now use the Helgason Fourier transform introduced in Subsection \ref{subsec: Fourier analysis on Poincare Disc} to provide an explicit formula for the symbol and kernel of $P_tAP_t$ based on the symbol of $A$. Since $P_t$ is radial, using Proposition \ref{prop: radial integral op}, 
\[
AP_te^{(\frac{1}{2}+i\lambda)\langle z, b \rangle}= h_t(\lambda)A e^{(\frac{1}{2}+i\lambda)\langle z, b \rangle}= h_t(\lambda)a(z, \lambda, b)e^{(\frac{1}{2}+i\lambda)\langle z, b \rangle}.
\]
Then 
\[
P_tAP_te^{(\frac{1}{2}+i\lambda)\langle z, b \rangle}=\frac{h_t(\lambda)}{\sqrt{\cosh t}}\int_{B(z,t)}\chi_{\sigma}(d(z,w))a(w,\lambda, b)e^{(\frac{1}{2}+i\lambda)\langle w, b \rangle}d\mu(w),
\]
\iffalse
\begin{align*}
    P_tAP_tu(z)&=\int_{\DD}k_t(d(z, w))A(P_tu(w))d\mu(w)\\
    &= \int_{\DD}k_t(d(z, w))\left(\frac{1}{2\pi}\iint_{\RR^+\times B}e^{(\frac{1}{2}+i\lambda)\langle w,b\rangle}a(w, \lambda, b)\widehat{P_tu}(\lambda, b)\tanh({2\pi\lambda})\,d\lambda\,db \right)d\mu(w)\\
    &=\frac{1}{2\pi}\frac{1}{\sqrt{\cosh{t}}}\int_{B(z, t)}\left(\iint_{\RR^+\times B}e^{(\frac{1}{2}+i\lambda)\langle w,b\rangle}a(w, \lambda, b)h_t({\lambda})\widehat{u}(\lambda, b)\tanh({2\pi\lambda})\,d\lambda\,db \right)d\mu(w)\\
    &=\frac{1}{2\pi}\frac{1}{\sqrt{\cosh{t}}}\iint_{\RR^+\times B}\left(\int_{B(z, t)}e^{(\frac{1}{2}+i\lambda)\langle w,b\rangle}a(w, \lambda, b)d\mu(w)\right) h_t({\lambda})\widehat{u}(\lambda, b)\tanh({2\pi\lambda})\,d\lambda\,db \\
    &=\frac{1}{2\pi}\iint_{\RR^+\times B}\left(\frac{h_t({\lambda})}{\sqrt{\cosh{t}}}\int_{B(z, t)}e^{(\frac{1}{2}+i\lambda)(\langle w,b\rangle-\langle z,b\rangle)}a(w, \lambda, b)d\mu(w)\right) \widehat{u}(\lambda, b)e^{(\frac{1}{2}+i\lambda)\langle z,b\rangle}\tanh({2\pi\lambda})\,d\lambda\,db.
\end{align*}
\fi
and the symbol of $P_tAP_t$ is given by
\begin{equation}
    a_{P_tAP_t}(z, \lambda, b)= \frac{h_t({\lambda})}{\sqrt{\cosh{t}}}\int_{B(z, t)}\chi_{\sigma}(d(z,w))e^{(\frac{1}{2}+i\lambda)(\langle w,b\rangle-\langle z,b\rangle)}a(w, \lambda, b)d\mu(w)
\end{equation}
while the kernel is given by
\begin{align*}
      &K_{P_tAP_t}(z, z')= \frac{1}{2\pi}\iint_{\RR^+\times B} a_{P_tAP_t}(z, \lambda, b) e^{(\frac{1}{2}+i\lambda)\langle z,b\rangle}e^{(\frac{1}{2}-i\lambda)\langle z', b\rangle}\lambda\tanh({2\pi\lambda})\, db d\lambda \\
     % &= \frac{1}{2\pi}\iint_{\RR^+\times B} \frac{h_t({\lambda})}{\sqrt{\cosh{t}}}\left(\int_{B(z,t)}\chi_{t, \sigma}(r)e^{(\frac{1}{2}+i\lambda)(\langle w,b\rangle-\langle z,b\rangle)}a(w, \lambda, b)d\mu(w)\right)e^{(\frac{1}{2}+i\lambda)\langle z,b\rangle}e^{(\frac{1}{2}-i\lambda)\langle z', b\rangle}\lambda\tanh({2\pi\lambda})\, db d\lambda\\
      &= \frac{1}{2\pi}\iint_{\RR^+\times B} \frac{h_t({\lambda})}{\sqrt{\cosh{t}}}\left(\int_{B(z,t)}\chi_{t, \sigma}(d(z,w))e^{(\frac{1}{2}+i\lambda)\langle w,b\rangle}a(w, \lambda, b)d\mu(w)\right)e^{(\frac{1}{2}-i\lambda)\langle z', b\rangle}\lambda\tanh({2\pi\lambda})\, db d\lambda.
\end{align*}
The symbol of
\[
\overline{A}_T:=\frac{1}{T}\int_0^TP_tAP_tdt
\]
is given by
\begin{equation*}
   \overline{a}_T(z, \lambda, b) = \frac{1}{T} \int_0^Ta_{P_tAP_t}(z, \lambda, b)dt = \frac{1}{T}\int_0^T \left[\frac{h_t({\lambda})}{\sqrt{\cosh{t}}}\int_{B(z, t)}\chi_{t, \sigma}(r)e^{(\frac{1}{2}+i\lambda)(\langle w,b\rangle-\langle z,b\rangle)}a(w, \lambda, b)d\mu(w)\right] dt,
\end{equation*}
where $r=d(z,w)$.

In what follows, we repeatedly use estimates such as \eqref{ineq: LMS- PDO HS norm}, which require a pointwise bound on the kernel of the operator. Note, however, that our assumptions allow $A$ to be a differential operator, in which case the kernel of $A$ itself is distributional rather than a bounded function. 

However, $P_t$ has a smoothing effect as soon as $t>0$, and the following lemma yields the required bound for the kernel of $P_tA P_t$.

%\[
%A_m u(x)=\sum_{|\alpha|\leq k} C_{m, \alpha}(x)\pa^\alpha u(x).
%\]
%Hence
%\[
%|A_mu(x)|\leq \sum_{|\alpha|\leq k}|C_{m, \alpha}(x)||\pa^\alpha u(x)|\leq C\|u\|_{C^k(B(x,S))},
%\]
%provided the coefficients $C_{m, \alpha}$ are globally bounded and independent of $m$.  

\begin{Lemma}\label{lem: sup norm bound of avg op}
    Let $A$ be an operator on $\DD$ whose distributional kernel $K_A\in \DDD'(\DD\times \DD)$ is $\Gamma$-invariant and satisfies the conditions in Theorem \ref{thm: main_theorem}. Then for each $t>0$, $P_tAP_t$ has a bounded integral kernel on $\DD\times \DD$. In other words, 
    \[
    \sup_{(z, w)\in \DD\times \DD} |K_{P_tAP_t}(z, w)|\leq C(t, \sigma, k).
    \]
\end{Lemma}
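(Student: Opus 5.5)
The plan is to write the kernel of $P_tAP_t$ as a distributional pairing and apply the local bound \eqref{ineq: local assumption} to a smooth test function. For fixed $(z,w)\in\DD\times\DD$, the definition of composition gives
\[
K_{P_tAP_t}(z,w)=\int_\DD K_t(z,y)\,\bigl(A\,K_t(\cdot,w)\bigr)(y)\,d\mu(y).
\]
Here $A$ acts on the smooth function $u_w:=K_t(\cdot,w)=k_t(d(\cdot,w))$. This function is $C^\infty$ because $\chi_{t,\sigma}$ is constant near $r=0$: it equals $1$ for $r\le t-\sigma$, at least when $t>\sigma$. For $t\le\sigma$, where $\chi_{t,\sigma}(0)$ may be less than $1$, I would first assume that $\chi_{t,\sigma}$ is constant near the origin, or else handle this small case by choosing the cutoff accordingly. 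Then $r\mapsto k_t(r)$ is smooth and even near $0$, and $y\mapsto d(y,w)^2$ is smooth, so $u_w$ is smooth on $\DD$.

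Step one is a uniform $C^k$ bound on $u_w$, independent of $w$. Since $u_w=u_0\circ g^{-1}$ for an isometry $g$ with $g\cdot 0=w$, it suffices to estimate the $C^k$ norm of $u_0(y)=k_t(d(0,y))$. The $C^k$ norm should be understood with respect to the hyperbolic metric, via covariant derivatives or normal coordinates on balls of fixed radius. This norm is isometry invariant, so I obtain
\[
\sup_w\|u_w\|_{C^k(B(y,S))}\le C'(t,\sigma,k)\lesssim \frac{1}{\sqrt{\cosh t}}\sum_{j\le k}\|\chi_{t,\sigma}^{(j)}\|_\infty\cdot C_k(t+S),
\]
where $C_k(t+S)$ controls the derivatives of the distance function away from the diagonal. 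Near the diagonal I use instead that $\chi_{t,\sigma}(d)$ is a smooth function of $d^2$.

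Step two applies \eqref{ineq: local assumption} to $u_w$, giving $|(Au_w)(y)|\le C\,C'(t,\sigma,k)$ for all $y$. Moreover, finite propagation and $\supp u_w\subset \overline{B(w,t)}$ imply $\supp Au_w\subset B(w,t+S)$. When $A$ is a differential operator, the same pointwise bound follows from \eqref{ineq: local assumption} as stated; no separate argument is needed.

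Step three is then the estimate
\[
|K_{P_tAP_t}(z,w)|\le \int_{B(z,t)}|K_t(z,y)|\,|Au_w(y)|\,d\mu(y)\le \frac{\Vol(B(0,t))}{\sqrt{\cosh t}}\,C\,C'(t,\sigma,k),
\]
which is uniform in $(z,w)$. I also note that $K_{P_tAP_t}(z,w)=0$ once $d(z,w)>2t+S$.

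The main obstacle is the rigorous identification of the distributional kernel of $P_tAP_t$ with the function above. To settle it, I pair with test functions $f,g\in C_c^\infty(\DD)$ and write $\langle g,P_tAP_tf\rangle$, noting that $P_tf=\int u_w f(w)\,d\mu(w)$. I then need to commute $A$ with this integral, which requires continuity of $A$ from $C^\infty$ to distributions; this follows from $K_A\in\cD'$ together with the finite propagation property. A secondary point is making the isometry invariance of the $C^k$ norm precise, so that the constant in step one is genuinely independent of $w$.
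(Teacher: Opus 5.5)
Your proposal follows the paper's proof. You write $K_{P_tAP_t}(z,w)=\int K_t(z,y)\,(A\,K_t(\cdot,w))(y)\,d\mu(y)$, bound $A\,K_t(\cdot,w)$ pointwise by \eqref{ineq: local assumption} together with a chain-rule estimate for $\chi_{t,\sigma}(d(\cdot,w))$, and integrate against $K_t(z,\cdot)$, whose $L^1$ norm is at most $\Vol(B(0,t))/\sqrt{\cosh t}$.

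Your extra care (isometry invariance of the $C^k$ norm for uniformity in $w$, and requiring $\chi_{t,\sigma}$ to be constant near $r=0$) covers points the paper passes over. In particular, with the paper's choice $\chi_{t,\sigma}(r)=\eta((r-t)/\sigma)$ and $t<\sigma$, the function $\chi_{t,\sigma}(d(\cdot,w))$ generally has a conical singularity at $w$, so the smoothness the paper asserts does need the adjustment you propose.
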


\begin{proof}
    The kernel $K_t(z,w)$ associated with the radial operator $P_t$ is 
\[
K_t(z, w):= k_t(d(z, w)), \quad \text{ where } \quad k_t(r):= \frac{1}{\sqrt{\cosh{t}}} \chi_{t, \sigma}(r),
\]
where $\chi_{t, \sigma}(r):[0, \infty)\to \RR$ is a decreasing smooth function such that $\chi_{t, \sigma}(r)=1$ for $r\leq t-\sigma$ and $\chi_{t, \sigma}(r)=0$ for $r> t$. In order to demonstrate the $\sigma$-dependence on the bounds explicitly,  we choose
%In particular, for $\epsilon<\epsilon_0$ (choice of $\epsilon_0$ to be determined later)
\[
\chi_{t, \sigma}(r):=\eta\left(\frac{r-t}{\sigma}\right),
\]
where $\eta\in C^\infty(\RR)$ is some decreasing function such that $\eta=1$ on $(-\infty, -1]$ and $\eta=0$ on $[0, \infty)$.

\begin{align*}
    K_{AP_{t}}(u, w)&=\int_\DD K_A(u, v)K_{P_{t}}(v, w)d\mu(v)=\int_\DD K_A(u, v) \frac{1}{\sqrt{\cosh{t}}} \chi_{t, \sigma}(d_\DD(v, w)) d\mu(v)
\end{align*}
Let $f_w(v)= \frac{1}{\sqrt{\cosh{t}}} \chi_{t, \sigma}(d(v, w))$. Then
\[
 K_{AP_t}(u, w) = Af_w(u).
\]
From assumption \eqref{ineq: local assumption}, 
\[
|K_{AP_t}(u, w)|\leq C \|f_w\|_{C^k(B(u, S))}.
\]
Now, $f_w$ is smooth in $v$. %Indeed, for $d_\DD(u, w)\leq t-\sigma$, even though the distance function is not smooth on the diagonal, $f_w$ is constant, hence smooth. Away from the diagonal, the distance function is smooth.
Moreover, for any $v\in B(u, S)$, if $f_w(v)\neq 0$ then $d_\DD(v, w)\leq t$, which implies
\[
d_\DD(u, w)\leq d_\DD(u, v)+ d_\DD(v, w)\leq S+t.
\]
So, assuming $r(v)=d_\DD (v, w)$, for any $v\in B(u, S)$,
\[
 |\pa_v^\alpha f_w(v)|=\left|\frac{1}{\sqrt{\cosh{t}}}\pa^\alpha_v(\chi_{t, \sigma}\circ r)(v)\right|= \frac{1}{\sqrt{\cosh{t}}}\left|\sum_{j=1}^{|\alpha|}\chi_{t, \sigma}^{(j)}(r)\cdot P_j(\pa^1_v r, \pa^2_v r, \cdots)\right|\leq \frac{1}{\sqrt{\cosh{t}}}\sum_{j=1}^{|\alpha|}|\chi_{t, \sigma}^{(j)}(r)|\cdot |P_j|
\]
where $P_j$ is a polynomial in the derivatives of $r$ and independent of $\sigma$ (they depend only on the geometry of $\DD$). Also,
\[
|\chi_{t, \sigma}^{(j)}(r)|\leq \frac{\|\eta^{(j)}\|_\infty}{\sigma^j}<\infty
\]
for any fixed $\sigma$. This gives,
\[
\|f_w\|_{C^k(B(u, S))}=\sup_{v:d_\DD(v, w)\leq t+S}\sum_{|\alpha|\leq k}|\pa_v^\alpha f_w(v)| \leq \frac{1}{\sqrt{\cosh{t}}}\sum_{|\alpha|\leq k} \left(C_{|\alpha|}\sum_{j=1}^{|\alpha|}\frac{\|\eta^{(j)}\|_\infty}{\sigma^j}\right),
\]
where $C_{|\alpha|}= \max_{j=1}^{|\alpha|}\sup_{B(w, t+S)}|P_j|$.

Now, consider the full kernel
\begin{align*}
    |K_{P_{t}AP_{t}}(z,w)|&=\left|\int_\DD K_{t}(z, u)K_{AP_{t}}(u, w)d\mu(u)\right|=\left|\int_{u\in B(z, t)} K_{t}(z, u)K_{AP_{t}}(u, w)d\mu(u)\right|\\
    &\leq \sup_{u\in B(z, t)\cap B(w, t+S)}\left|K_{AP_t}(u, w)\right|\int_{u\in B(z, t)}|K_{t}(z, u)|d\mu(u).
\end{align*}
Clearly, $K_{P_{t}AP_{t}}(z, w)=0$ when $d(z, w)>2t+S$ which reinforces the finite propagation condition on $K_{P_{t}AP_{t}}$. Note that 
\[
\|K_{t}\|_{L^1(\DD)}=\int_{u\in B(z, t)}|k_{t}(d_\DD(z, u))|d\mu(u)\leq \frac{1}{\sqrt{\cosh{t}}}\Vol(B(z, t))=M_t ~(\text{say}).
\]
Combining these bounds, we have
\begin{equation}\label{ineq: pointwise averaged kernel estimate}
    \sup_{(z, w)\in \DD\times \DD}|K_{P_{t}AP_{t}}(z,w)|\leq C\sum_{|\alpha|\leq k} \frac{C'(t, |\alpha|)}{\cosh{t}}  \left(\sum_{j=1}^{|\alpha|}\frac{1}{\sigma^j}\right).
\end{equation}
\end{proof}

\begin{remark}
 For a differential operator, \eqref{ineq: local assumption} is the natural operator-theoretic substitute for saying ``order $\leq k$ with bounded coefficients''. Under the setting of Theorem \ref{thm: main_theorem}, we consider a sequence of operators $A_p$ on $X_p$. Let us denote again by $A_p$ their lifts to $\DD$. Since the proof of \eqref{ineq: pointwise averaged kernel estimate} uses only the local estimate \eqref{ineq: local assumption}, with constants $C$, $S$, and $k$ independent of $p$, it follows that the bound of $\|K_{P_tA_pP_t}\|_{L^\infty(\DD\times \DD)}$ is uniform in $p$. This observation will be used in Section \ref{sec: proof_main_thm} while proving Theorem \ref{thm: main_theorem}.
\end{remark}

\subsection{Square integrability of $K_{\overline{A}_T}$ and Nevo's ergodic theorem}\label{subsec: Hs-norm of avg op.}

Fixing $C(\lambda):=\lambda \tanh({2\pi\lambda})$, we have
\begin{align*}\label{eq: HS norm with (z,b)}
    &\iint_{\DDD\times\DD}|K_{\overline{A}_T}(z,z')|^2\, d\mu(z)\,d\mu(z') =\iiint_{\DDD\times B\times \RR^+} \left|\frac{1}{T} \int_0^Ta_{P_tAP_t}(z, \lambda, b)dt\right|^2 e^{\langle z, b \rangle} C(\lambda)\,d\mu(z)\,db\,d\lambda\\
    &= \iiint_{\DDD\times B\times \RR^+} \left|\frac{1}{T} \int_0^T\frac{h_t(\lambda)}{\sqrt{\cosh{t}}}\int_{w\in B(z, t)}\chi_{t, \sigma}(r)e^{(\frac{1}{2}+i\lambda)(\langle w,b\rangle-\langle z,b\rangle)}a(w, \lambda, b)d\mu(w)\, dt\right|^2 e^{\langle z,b \rangle}C(\lambda)  d\mu(z)\,db\,d\lambda,
\end{align*}
where $r=d(z, w)$. Applying Cauchy-Schwarz in $t$ on the inner integral, we have 
\begin{multline*}
     \left|\frac{1}{T} \int_0^T\frac{h_t(s_\lambda)}{\sqrt{\cosh{t}}}\int_{w\in B(z, t)}\chi_{t, \sigma}(d_\DD(z, w))e^{(\frac{1}{2}+i\lambda)(\langle w,b\rangle-\langle z,b\rangle)}a(w, \lambda, b)d\mu(w)\, dt\right|^2
     \\ \leq \left(\frac{1}{T}\int_0^T |h_t(s_\lambda)|^2dt \right)\left( \frac{1}{T}\int_0^T |I_\lambda(t, z, b)|^2dt\right),
\end{multline*}
where 
\[
I_\lambda(t, z, b)=\frac{1}{\sqrt{\cosh{t}}}\int_{w\in B(z, t)}\chi_{t, \sigma}(d_\DD(z, w))e^{(\frac{1}{2}+i\lambda)(\langle w,b\rangle-\langle z,b\rangle)}a(w, \lambda, b)d\mu(w).
\]
\iffalse
\begin{align*}
    \int_{w\in B(z, t)}e^{(\frac{1}{2}+i\lambda)(\langle w,b\rangle-\langle z,b\rangle)}a(w, \lambda, b)dw&= e^{-\langle z,b\rangle}\int_{w\in B(z, t)}e^{i\lambda(\langle w,b\rangle-\langle z,b\rangle)}a(w, \lambda, b)e^{\langle w,b\rangle}dw \\
    &=e^{-\langle z,b\rangle}\int_{w\in B(z, t)}e^{i\lambda(\langle w,b\rangle-\langle z,b\rangle)}e^{\langle w,b\rangle-\langle w,b'\rangle}a(w, \lambda, b')\delta(b'-b) e^{\langle w,b'\rangle}dwdb' 
\end{align*}
\fi

First, we deal with the integral $I_\lambda(t, z, b)$. For any fixed $z\in \DDD$ and $b\in B$, one can choose a unique $h\in \mathrm{PSU}(1,1)$ such that 
\[z=h\cdot 0;\,\,\,\,\,\,\, b=h\cdot 1,\]
where $0$ denotes the origin of the disk and $1\in B$ is some fixed boundary point, and where $h$ acts on $0, 1$ by a M\"{o}bius transformation.
In what follows we write $I_\lambda(t, h)$ for $I_\lambda(t, z, b)$.

So, having identified $(z, b)\in \DDD\times B$ with a $h\in \mathrm{PSU}(1,1)$, any point $(w,b')$ can be written uniquely as $(w,b'):=h g$,
meaning that $(w, b')=(h  g\cdot 0, h  g\cdot 1)$. We use the ${ANK}$ decomposition of $g$ to write $(w, b')=:=h a_sn_uk_\theta$ where $a_s\in A, n_u\in N, k_\theta\in K$.
The case $b'=b$ corresponds to $\theta=0$~: $(w,b):=  h a_sn_u$.
Accordingly $a(w, \lambda, b')$ can be rewritten as $a_\lambda(h a_sn_uk_\theta)$ and $a(w, \lambda, b) $ as $a_\lambda(h a_sn_u )$.

Now, considering the following identity,
\[
\langle hx, h\xi\rangle - \langle hy, h\xi\rangle = \langle x, \xi\rangle - \langle y, \xi\rangle, 
\]
and choosing $x=g\cdot 0, \, y=0,\, \xi=1$ we get
\[
\langle w, b\rangle - \langle z, b\rangle= \langle hg\cdot 0, h\cdot 1\rangle - \langle h\cdot 0, h\cdot 1\rangle= \langle g\cdot 0, 1\rangle- \langle 0, 1\rangle = \langle g\cdot 0, 1\rangle.
\]

 Recall that every symbol $a(z, \lambda, b)$ can be decomposed into a rotationally invariant part and a part with zero angular mean. More precisely, for fixed $\lambda$,
\[
a(z, \lambda, b)= \left(\frac{1}{2\pi} \int_0^{2\pi} a_\lambda(z, b')db' \right) + \left( a_\lambda(z, b)- \frac{1}{2\pi} \int_0^{2\pi} a_\lambda(z, b')db'\right),
\]
where the first term is independent of $b$, for every $z$. Observe that if the symbol is rotationally invariant, i.e. $a_\lambda(h'\cdot k_\theta)= a_\lambda(h')$ for every $h'$ and $\theta$, we fall into the case treated in \cite{MassonSahlsten17}. Hence, from now on, without loss of generality, we work under the following additional assumption on our symbol~:
\begin{equation}\label{condition A1}
    \int_0^{2\pi}a_\lambda(h'\cdot k_\theta)d\theta=\int_0^{2\pi} a(h'k_\theta, \lambda)=0\,\,\,\,\, \text{for every } \lambda\in\RR^+.
    \tag{A1}
\end{equation}

Let us do a coordinate change on the integral $I_\lambda(t)$ in terms of ${ANK}$ coordinates as follows. 
We call $G_t$ the subset $G_t\subset G$ defined by the condition that for any element $h'\in G$ and any $g\in G_t$, the point $h'gK$ is at distance at most $t$ from $h'K$ in the hyperbolic disc $\DD\sim G/K$.
The set $G_t$ is invariant under the right action of $K$ so we may identify it with a subset $F_t$ of $AN$.

\begin{align}
    I_\lambda(t, z, b)= I_\lambda(t, h)&=\frac{1}{\sqrt{\cosh{t}}}\int_{w\in B(z, t)} \chi_{t, \sigma}(d_\DD(z, w)) e^{(\frac{1}{2}+i\lambda)(\langle w,b\rangle-\langle z,b\rangle)}a(w, \lambda, b)d\mu(w)\\
   \nonumber &= \frac{1}{\sqrt{\cosh{t}}}\int_{F_t}\chi_{t, \sigma}(d_\DD(h\cdot0, h a_sn_u\cdot 0))e^{(\frac{1}{2}+i\lambda)\langle a_sn_u\cdot 0, 1\rangle}a_\lambda(ha_sn_u)dsdu\\
  \nonumber  &= \frac{1}{\sqrt{\cosh{t}}}\int_{G_t} \chi_{t, \sigma}(d_\DD(0, a_sn_u\cdot 0)) e^{(\frac{1}{2}+i\lambda)\langle a_sn_u\cdot 0, 1\rangle}a_\lambda(ha_sn_uk_\theta)\delta(\theta)ds du d\theta\\
    &= \label{e:ANKI}\frac{1}{\sqrt{\cosh{t}}}\int_{G_t} \chi_{t, \sigma}(d_\DD(0, a_sn_u\cdot 0)) e^{(\frac{1}{2}+i\lambda)s}a_\lambda(ha_sn_uk_\theta)\delta(\theta)ds du d\theta.
\end{align}
In the third equality, we have introduced the Dirac mass $\delta$ at $\theta=0$. In the final equality, we have used the fact that $\langle a_sn_u\cdot 0, 1 \rangle= \langle a_s\cdot 0, 1 \rangle$ since $n_u$ moves 0 along a horocycle and $\langle \cdot, b \rangle$ is constant on every horocycle. Then one can explicitly compute that $\langle a_s\cdot0, 1 \rangle=s$.

We now do an integration by parts to suppress the singularity at $\theta=0$. Let us momentarily fix $s$ and $u$ and let $A(\theta)=\chi_{t, \sigma}(d_\DD(0, a_sn_u\cdot 0)) e^{(\frac{1}{2}+i\lambda)s}a_\lambda(ha_sn_uk_\theta)$ and $S(\theta)=\sum_{k\neq 0}\frac{e^{ik\theta}}{k^2}$ so that $S''(\theta)= - \sum_{k\neq 0} e^{ik\theta}= -2\pi \delta(\theta)+1$ (follows from the Fourier expansion of the Dirac-delta). Then,
\begin{align*}
    \int_B A(\theta)\delta(\theta)d\theta = \frac{1}{2\pi}\int_B A(\theta)d\theta -\frac{1}{2\pi}\int_B A(\theta)S''(\theta)d\theta  = \frac{1}{2\pi}\int_B A(\theta)d\theta -\frac{1}{2\pi}\int_B A''(\theta)S(\theta)d\theta
\end{align*}
From assumption \eqref{condition A1}, we have $\int_B a_\lambda(h'\cdot k_\theta)=0$, which implies $\int_B A(\theta) d\theta=0$. So,
\begin{align*}
    I_\lambda(t, z, b)&=\frac{1}{\sqrt{\cosh{t}}} \int_{w\in B(z,t)} \chi_{t, \sigma}(d_\DD(z, w)) e^{(\frac{1}{2}+i\lambda)(\langle w,b\rangle-\langle z,b\rangle)} a(w, \lambda, b)d\mu(w) \\
    &= -\frac{1}{2\pi} \frac{1}{\sqrt{\cosh{t}}}\int_{G_t}\left(\sum_{k\neq 0}\frac{e^{ik\theta}}{k^2}\right)\frac{\pa^2}{\pa \theta^2}\left(\chi_{t, \sigma}(d_\DD(0, a_sn_u\cdot 0))e^{(\frac{1}{2}+i\lambda)s}a_\lambda(ha_sn_uk_\theta)\right)dsdud\theta\\
    &=  -\frac{1}{2\pi} \frac{1}{\sqrt{\cosh{t}}}\int_{G_t}\frac{\pa^2a_\lambda}{\pa \theta^2}(ha_sn_uk_\theta)\, \left(\sum_{k\neq 0}\frac{e^{ik\theta}}{k^2}\right)\chi_{t, \sigma}(d_\DD(0, a_sn_uk_\theta\cdot 0))e^{(\frac{1}{2}+i\lambda)s}\, dsdud\theta.
\end{align*}

\iffalse
For any function $f\in L^2(SX_\Gamma)$, using the identification above, we can express $f$ as a function in $L^2(G)$. 
Let $f=\frac{\pa^2a_\lambda}{\pa \theta^2}(ha_sn_uk_\theta)$ and consider the averaging operator $(\pi(\mu)f)(z, b)$ defined as
\begin{equation}
    \pi(\mu)f(z, b)=  \int_{G}f(g^{-1}(z, b))d\mu(g)= \int_G f(g^{-1}h)d\mu(g),
\end{equation}
where $G=\mathrm{PSU}(1,1)$, $h\sim (z, b)$, and $$d\mu_t(g)=\frac{1}{2\pi}\frac{1}{\sqrt{\cosh{t}}} \chi_{G_t}e^{(\frac{1}{2}+i\lambda)s}(\sum_{k\neq 0}\frac{e^{ik\theta}}{k^2})ds\,du\,d\theta.$$ 
\fi

Define $f(h)=\frac{\pa^2a_\lambda}{\pa \phi^2}(hk_\phi)|_{\phi=0}$  so that 
\[
f(ha_s n_u k_\theta)
=\left.\frac{\pa^2a_\lambda}{\pa \phi^2}
\left(h a_s n_u k_\theta k_\phi\right)\right|_{\phi=0}=
\left.\frac{\pa^2a_\lambda}{\pa \phi^2}\left(h a_s n_u k_{\phi+\theta}\right)\right|_{\phi=0}=
\frac{\pa^2a_\lambda}{\pa \theta^2}\left(h a_s n_u k_\theta\right),
\]
and 
%Let {\color{blue}$f(g)=\frac{\pa^2a_\lambda}{\pa \theta^2}(ha_sn_uk_\theta)|_{\theta=0}$} and 
\begin{align*}
 d\mu_t(g)&=\frac{1}{2\pi} \chi_{\sigma}(a_s n_u k_\theta) e^{(\frac{1}{2}+i\lambda)s}\left(\sum_{k\neq 0}\frac{e^{ik\theta}}{k^2}\right)ds\,du\,d\theta=\beta_t(g) dg
\end{align*}
where $dg= ds\,du\,d\theta$ is the Haar measure on $G$ and $\beta_t(a_s n_u k_\theta)=\frac{1}{2\pi} \chi_{\sigma}(a_s n_u k_\theta)e^{(\frac{1}{2}+i\lambda)s}(\sum_{k\neq 0}\frac{e^{ik\theta}}{k^2})$, with $\chi_{t, \sigma}(g)=0$ whenever $g\notin G_t$.

Then $I_\lambda(t,h)$ can be interpreted as an averaging (or convolution) operator $(\pi(\mu_t)f)(z, b)$ defined as
\begin{align*}
I_\lambda(t,h)&= \frac{1}{\sqrt{\cosh{t}}} (\pi(\mu_t)f)(h)= \frac{1}{\sqrt{\cosh{t}}}\int_G f(hg)d\mu_t(g)\\
&=\frac{1}{\sqrt{\cosh{t}}}\int_G f(hg)\beta_t(g) dg.
\end{align*}

%{\color{red}{We need $\chi_{t, \sigma}$}. In other words replace $\chi_{G_t}$ by $\chi_{t, \sigma}(a_s n_u k_\theta)$ everywhere.}

The papers \cite{Nevo98} and \cite{GorodnikNevo15} provide estimates on averaging operators (also see Section 6 in \cite{MassonSahlsten17})~:
\begin{itemize}
    \item For any $f\in L^2_0(SX_\Gamma):=\{f\in L^2(SX_\Gamma): \int_{SX_\Gamma} f\,dg =0\}$
    \[
    \|\pi(\mu)f\|_2 \leq \|\lambda_G(\mu)\|^{1/n}\|f\|_2,
    \]
    where $\lambda_G(\mu)$ is the convolution operator on $G$ and $n\geq 2$ depends on the principal series present in the decomposition of $L^2_0(SX_\Gamma)$ into irreducible unitary representations of $G$. In our context, $n$ depends only on the spectral gap of the Laplacian.

    \item For $G=\mathrm{PSU}(1,1)\equiv \mathrm{PSL}(2, \RR)$, the convolution operator $\lambda_G(\mu)$ satisfies the following \emph{Kunze-Stein inequality} (see Definition 4 in \cite{Nevo98}): for $F\in L^p(G),~~ 1\leq p< 2$ and $f\in L^2(G)$, 
    \[
    \|F\ast f\|_2\leq C_p \|F\|_p\|f\|_2.
    \]
    This implies that 
\[
\|\lambda_G(\mu)\|^{1/n}\leq C_p^{1/n} \|\beta_t\|^{1/n}_{p}.
\]
\end{itemize}
Then for any $f\in L^2(SX_\Gamma)$, we have the following estimate on the averaging operator, 
\begin{equation}\label{eq: conv rate}
    \left\| \pi(\mu)f -\int_{SX_\Gamma} f \right\|_{2}\leq C_p^{1/n} \|\beta_t\|_p^{1/n}\|f\|_2
\end{equation}

\begin{note}
We point out a distinction in the notation used to represent our $L^2$-norms. Moving forward, for $f\in L^2(SX_\Gamma)$, the notation $\|f\|_2$ is being used to denote the usual $L^2$-norm while $\tnorm{f}_2$ will be used to denoted a volume normalised $L^2$-norm and is defined as $\tnorm{f}^2_2=\frac{1}{\Vol(SX_\Gamma)}\|f\|^2_2$.

%   {\color{red}{Clarify what is $X$. This notation applies to $X=X_\Gamma$ ? Also $X=SX_\Gamma$ ? other spaces ? }}\SSa{In this section, I used it on $SX_\Gamma$. In section 3.4, I see the full norm being used as $\|K_\rho\|_2^2$ while writing the estimates.  Should we change the notation $\|{m_p}_s-m_p\|_{L^2(\RR^+, \lambda\tanh{(2\pi \lambda)}d\lambda)}$ to just $\|{m_p}_s-m_p\|_2$ if we want to follow the same norm-notation throughout or just restrict the notation to $SX_\Gamma$ and leave the rest as it is?}. 
\end{note}

For each fixed $h$, $a_\lambda(h\cdot k_\phi)$ is $2\pi$-periodic and using the assumption \eqref{condition A1}, we have the Fourier expansion $a_\lambda(hk_\phi)=\sum_{m\neq 0} a_m(h)e^{im\phi}$, where $a_m(h)=\int_0^{2\pi}a_\lambda(hk_\phi)e^{-im\phi}d\theta$ up to the normalisation constant. Then $f(h)= \sum_{m\neq0} (-m^2)a_m(h)e^{im\phi}$, and by default, the $m=0$ mode gives $\int_{SX_\Gamma} f=0$. 
%Now, define $\tilde f:= \frac{\pa^2 a}{\pa \theta^2}(h\, n_u\,a_s\,k_\theta)$ and $f\in L^2(SX_\Gamma)$ where $f(z, b):= \tilde f(k_\theta^{-1}a_s^{-1}n_u^{-1}h)\equiv \tilde f (g^{-1}h)$, with $h\sim (z, \theta)$. 

\begin{align*}
   \iint_{\DDD\times B} \left( \frac{1}{T}\int_0^T |I_\lambda(t, z, b)|^2dt\right)  e^{\langle z, b \rangle}d\mu(z)db &= \int_{\Gamma\backslash G} \left(\frac{1}{T}\int_0^T |I_\lambda(t, h)|^2 dt\right)dh\\
   &= \int_{\Gamma\backslash G} \left(\frac{1}{T}\int_0^T \frac{1}{\cosh{t}}|(\pi(\mu_t)f)(h)|^2dt\right)dh\\
   %&= \int_H \left(\frac{1}{T}\int_0^T \left|\int_G \frac{1}{\sqrt{\cosh{t}}}f(g^{-1}h)d\mu_t(g)\,dg\right|^2 dt\right)dh\\
   %&= \int_H \left(\frac{1}{T}\int_0^T \frac{1}{\cosh{t}} \left|\int_G f(g^{-1}h)d\mu_t(g)\,dg\right|^2 dt\right)dh\\
  % &\leq \int_H\left(\frac{\|f\|^2_2}{T}\int_0^T\frac{1}{\cosh{t}}\|\lambda_G(\mu_t)\|^{2/n}dt\right)dh\\
   &\leq \frac{\|f\|^2_2}{T}\int_0^T\frac{1}{\cosh{t}}C_p^{2/n}\|\beta_t\|_{p}^{2/n}dt.
\end{align*}
Note that
\begin{align*}
    \|\beta_t\|_{p}^p=\frac{1}{(2\pi)^p}\int_G e^{ps/2}|\chi_{t, \sigma}(g)|^p \left|\sum_{k\neq 0}\frac{e^{ik\theta}}{k^2}\right|^pdg \lesssim \frac{1}{(2\pi)^p}\int_{G_t} e^{ps/2}dg.
\end{align*}
Since, $\cosh{d(0, a_sn_u\cdot 0)}=\frac{u^2e^s+2\cosh{s}}{2}$, \text{ and } $g\in G_t$ iff $\cosh{d(0, a_sn_u\cdot 0)}\leq \cosh t$, we have $|u|\leq \sqrt{\frac{2(\cosh{t}-\cosh{s})}{e^s}}$ and $0\leq s\leq t$. So, for any $p\geq1$,
\begin{align*}
    \|\beta_t\|_{p}^p &\lesssim \frac{1}{(2\pi)^p}\int_{G_t} e^{ps/2}dg=\frac{1}{(2\pi)^{p-1}}\int_{F_t}e^{ps/2}duds\sim \int_0^t\int_{|u|\leq \sqrt{\frac{2(\cosh{t}-\cosh{s})}{e^s}}}e^{ps/2}duds\\
    &\sim \int_0^t e^{ps/2}{e^{-s/2}}\left(\sqrt{\cosh{t}-\cosh{s}}\right)\,ds \lesssim \int_0^t e^{ps/2}{e^{-s/2}}\left(\sqrt{e^t-e^s}\right)\,ds\\
    &=\int_0^t e^{ps/2}\sqrt{e^{t-s}-1}= e^{pt/2}\int_0^t  e^{(s-t)\frac{p-1}{2}}\sqrt{1-e^{s-t}}ds,\,\,\,\,\,(\text{taking } r=t-s)\\
    &=e^{pt/2}\int_0^t  e^{(-r)\frac{p-1}{2}}\sqrt{1-e^{-r}}dr \leq e^{pt/2}\int_0^t  e^{(-r)\frac{p-1}{2}}dr=e^{pt/2} \frac{2}{p-1}\left(1-e^{-\frac{t(p-1)}{2}}\right)\lesssim e^{pt/2},
\end{align*}
where the the penultimate term above, $e^-\frac{t(p-1)}{2}\leq 1$ since $p\geq 1$. For our purpose, fixing any $p$ (say $p=3/2$) suffices, which implies
\begin{align*}
   \iint_{\DDD\times B} \left( \frac{1}{T}\int_0^T |I_\lambda(t, z, b)|^2dt\right)  e^{\langle z, b \rangle}d\mu(z)db    &\leq C_{3/2}^{2/n}\|f\|^2_2 \left(\frac{1}{T}\int_0^T\frac{1}{\cosh{t}}\|\beta_t\|_{3/2}^{2/n}dt\right)\\
   &\lesssim \|f\|^2_2\left(\frac{1}{T}\int_0^T\frac{1}{\cosh{t}}e^{t/n}dt\right)\\
   &\leq \|f\|^2_2\left(\frac{1}{T}\int_0^Te^{-t}e^{t/n}dt\right)\\
   &= \|f\|^2_2\left(\frac{1}{T}\int_0^Te^{t(-1+1/n)}dt\right).
   %&={\color{red}\|f\|^2_2\Vol(SX_\Gamma)\left(\frac{1}{T}\int_0^Te^{t(-1+1/n)}dt\right).}
\end{align*}
Since $n>1$, $\frac{1}{n}-1<0$, and we have
\begin{align*}
   \iint_{\DDD\times B} \left( \frac{1}{T}\int_0^T |I_\lambda(t, z, b)|^2dt\right)  e^{\langle z, b \rangle}d\mu(z)db    &\lesssim \|f\|^2_2\left(\frac{1}{T}\int_0^Te^{t(-1+1/n)}dt\right) \\
   &\leq \Vol(SX_\Gamma)  \tnorm{\frac{\pa^2a_\lambda}{\pa^2\theta}}^2_2\frac{1}{T(1-1/n)}.
\end{align*}
Recall for further use that $n$ depends only on the spectral gap of the Laplacian on $X_\Gamma$.
Now we are ready to estimate our required integral.
\begin{align*}
  &\iint_{\DDD\times\DD}|K_{\overline{A}_T}(z,w)|^2\, d\mu(z)\,d\mu(w) \\
  &\leq\int_{\RR^+} \left(\iint_{\DDD\times B} \left(\frac{1}{T}\int_0^T |h_t(\lambda)|^2dt \right)\left( \frac{1}{T}\int_0^T |I_\lambda(t, z, b)|^2dt\right)  e^{\langle z, b \rangle}d\mu(z)db \right)  \lambda \tanh({2\pi\lambda}) d\lambda\\
  &=\int_{\RR^+} \left(\frac{1}{T}\int_0^T |h_t(\lambda)|^2dt \iint_{\DDD\times B} \left( \frac{1}{T}\int_0^T |I_\lambda(t, z, b)|^2dt\right)  e^{\langle z, b \rangle}d\mu(z)db \right)  \lambda \tanh({2\pi\lambda}) d\lambda\\
  &\lesssim \int_{\RR^+} \tnorm{\frac{\pa^2a_\lambda}{\pa^2\theta}}^2_2\frac{\Vol(SX_\Gamma) }{(1-1/n)} \frac{1}{T^2}\int_0^T |h_t(\lambda)|^2 \lambda \tanh({2\pi\lambda}) \,dt d\lambda\\
  &\leq C\frac{\Vol(SX_\Gamma) }{(1-1/n)} \frac{1}{T^2} \int_{\RR^+} \int_0^T |h_t(\lambda)|^2 \lambda \tanh({2\pi\lambda}) \,dt d\lambda, \,\,\,\,\, \text{where }~~~~~ C=\sup_{\lambda\in \RR^+}\tnorm{\frac{\pa^2a_\lambda}{\pa^2\theta}}^2_2.
\end{align*}

Our assumptions do not imply that $C=\sup_{\lambda\in \RR^+}\|\pa^2_\theta a_\lambda\|^2_{L^2(SX_\Gamma)}$ is finite (in the case of a differential operator, for instance, the symbol $a$ is polynomial in $\lambda$). Using spectral cut-offs, as described in Subsection \ref{subsec: symbol cut-offs}, we want to restrict $\lambda\in I$, which makes $C$ a finite quantity. The following subsection provides details regarding this.

%\subsection{Symbol cut-offs and spectral projections} 

%Let $\rho(\lambda)$ be a compactly supported smooth function and consider the pseudo-differential operator $\mathrm{Op}({a}\rho)$ where $a(z, \lambda, b)$ denotes a symbol on $\DD$. Since $\rho$ depends only on $\lambda$, we have the following

\subsection{Square-integrability of kernel of $\overline{A'}_T$}\label{subsec: square-integ of trunc avg op}

Starting with a $\Gamma$-invariant symbol $a(z, \lambda, b)$, let $A:=\Op(a)$ be defined as in \eqref{eq: Op(a) with inverse Helgason transform}, and assume that the kernel $K_A(z, w)=0$ whenever $d(z, w)\geq S$. If $K_{P_tAP_t}(z,z')\neq 0$, then there exists $w, w'$ such that 
\[
d(z, w)\leq t,\,\,\,\,\, d(w,w')\leq S,\,\,\,\,\, d(w',z')\leq t.
\]
So, $K_{P_tAP_t}(z,z')\neq 0$ whenever
\[
d(z,z')\leq d(z,w)+d(w,w')+d(w',z')=2t+S.
\]
In other words, 
\begin{align}\label{e:Dt}
d(z, z')>2t+S =: S_t~(\text{say}) \implies K_{P_tAP_t}(z,z')= 0.
\end{align}
This implies finite propagation for the operator $\overline{A}_T:=\frac{1}{T}\int_0^TP_tAP_tdt$ corresponding to the symbol $\overline{a}_T=\frac{1}{T}\int_0^Ta_{P_tAP_t}dt$. Now, we define
\[
\overline{A'}_Tu(z):= \mathrm{Op}(\overline{a}_{T}\rho)u(z)= \frac{1}{2\pi}\iint_{\RR^+\times B}\int_{\DD} a(z, \lambda, b) \rho(\lambda) e^{(\frac{1}{2}+i\lambda)\langle z,b\rangle}e^{(\frac{1}{2}-i\lambda)\langle w, b\rangle} u(w)\lambda\tanh({2\pi\lambda}) d\mu(w) \, d\lambda\, db, 
\]
where $\rho(\lambda)$ is a smooth bump function supported on $I'\supset I$ and $\rho(\lambda)=1$ when $\lambda\in I$. Since $K_A$ is $\Gamma$-invariant, we have that $K_{\overline{A}_T}$ is also $\Gamma$-invariant which in turn implies that $K_{\overline{A'}_T}$ is $\Gamma$-invariant. We point out that even though $K_{\overline{A}_T}$ has finite propagation, it does not imply the same for $K_{\overline{A'}_T}$. As a consequence, there might be an issue to pass $K_{\overline{A}_T}$ on to the quotient $X_\Gamma$. To bypass this issue, we use truncated kernels. According to Lemma \ref{lem: truncated kernel HS-norm estimate}, we have 
\begin{multline}\label{ineq: Op_Gamma, r(a_T rho) HS-estimate}
    \|\Op_{\Gamma, r}(\overline{a}_T\rho)\|^2_{\mathrm{HS}(X_\Gamma)}\leq \int_{z\in \DDD}\int_{w\in \DD} |K_{\overline{A'}_T}(z,w)|^2  d\mu(w)d\mu(z) + \\
   \frac{e^{2r}}{l_{X_\Gamma}}\operatorname{Vol}\{z\in X:\operatorname{InjRad}_X(z)<r\}\,\,\sup_{(z,w)\in \DDD\times \mathbb{\DD}}|K_{\overline{A'}_T}(z,w)|^2,
\end{multline}
Now, estimating the integral on the right, we get
\begin{align*}
    \iint_{\DDD\times\DD}|&K_{\overline{A'}_T}(z,w)|^2\, d\mu(z)\,d\mu(w) \\
    %\iiint_{\DDD\times B\times \RR^+} \left|\frac{1}{T} \int_0^Ta_{P_tAP_t}(z, \lambda, b)\rho(\lambda)dt\right|^2 e^{\langle z, b \rangle} \lambda \tanh({2\pi\lambda}) \,d\mu(z)\,db\,d\lambda\\
    &= \int_{\lambda\in I'}\rho^2(\lambda)\iint_{\DDD\times B} \left|\frac{1}{T} \int_0^Ta_{P_tAP_t}(z, \lambda, b)dt\right|^2 e^{\langle z, b \rangle} \lambda \tanh({2\pi\lambda}) \,d\mu(z)\,db\,d\lambda.\\
    &\leq \int_{\lambda\in I'}\iint_{\DDD\times B} \left|\frac{1}{T} \int_0^Ta_{P_tAP_t}(z, \lambda, b)dt\right|^2 e^{\langle z, b \rangle} \lambda \tanh({2\pi\lambda}) \,d\mu(z)\,db\,d\lambda.
\end{align*}
Following a similar computation as the previous subsection with the integral over $\lambda\in \RR_+$ replaced by an integral over $\lambda\in I'$, we get
\begin{align*}
  \iint_{\DDD\times\DD}&|K_{\overline{A'}_T}(z,w)|^2\, d\mu(z)\,d\mu(w) \\
  %\leq \int_{\lambda\in I'}\iint_{\DDD\times B} \left|\frac{1}{T} \int_0^Ta_{P_tAP_t}(z, \lambda, b)dt\right|^2 e^{\langle z, b \rangle} \lambda \tanh({2\pi\lambda}) \,d\mu(z)\,db\,d\lambda.\\
  %&\leq\int_{\lambda\in I'} \left(\iint_{\DDD\times B} \left(\frac{1}{T}\int_0^T |h_t(\lambda)|^2dt \right)\left( \frac{1}{T}\int_0^T |I_\lambda(t)|^2dt\right)  e^{\langle z, b \rangle}d\mu(z)db \right)  \lambda \tanh({2\pi\lambda}) d\lambda\\
  &\leq\int_{\lambda\in I'} \left(\frac{1}{T}\int_0^T |h_t(\lambda)|^2dt \iint_{\DDD\times B} \left( \frac{1}{T}\int_0^T |I_\lambda(t, z, b)|^2dt\right)  e^{\langle z, b \rangle}d\mu(z)db \right)  \lambda \tanh({2\pi\lambda}) d\lambda\\
  &\lesssim \int_{\lambda\in I'} \left\|\frac{\pa^2a_\lambda}{\pa^2\theta}\right\|^2_{L^2(SX_\Gamma)}\frac{\Vol(SX_\Gamma) }{(1-1/n)} \frac{1}{T^2}\int_0^T |h_t(\lambda)|^2 \lambda \tanh({2\pi\lambda}) \,dt d\lambda\\
  &\leq C\frac{\Vol(SX_\Gamma) }{(1-1/n)} \frac{1}{T^2} \int_{\lambda\in I'} \int_0^T |h_t(\lambda)|^2 \lambda \tanh({2\pi\lambda}) \,dt d\lambda, \,\,\,\,\, \text{where }~~~~~ C=\sup_{\lambda\in I'}\tnorm{\pa^2_\theta a_\lambda}^2_2.\\
\end{align*}

%Since $P_t$ is a radial integral operator, we have that the action of the operator $P_t$ on a generalised eigenfunction $\psi_\lambda$ with eigenvalue $\lambda$ is given by 
%\[
%P_t\psi_\lambda(z)=\int_{\DD} K_t(z,w)e^{(\frac{1}{2}+i\lambda)\langle w,b\rangle}d\mu(w)=h_t(\lambda)e^{(\frac{1}{2}+i\lambda)\langle z,b\rangle}=h_t(\lambda)\psi_\lambda(z).
%\]
Recall that $h_t$ is the Selberg transform of the kernel $k_t$. In other words, it is the Fourier transform of the radial function $z\mapsto k_t(d(0, z))=\frac{1}{\sqrt{\cosh{t}}}\chi_{t, \sigma}(d(0, z))$. Using Plancherel's theorem (up to some normalisation factor), and the expression of the volume form in hyperbolic polar coordinates, we have that
\[
\int_{\RR^+}|h_t(\lambda)|^2\lambda\tanh{(2\pi\lambda)}d\lambda= \int_0^\infty |k_t(r)|^2 \sinh{r}dr=\frac{1}{\cosh{t}}\int_0^t |\chi_{t, \sigma}(r)|^2\sinh{r}dr\leq \frac{\cosh{t}-1}{\cosh{t}}.
\]

% Using \eqref{eq: radial_op_Selberg_relation}, we have
%Choosing $z=0$ and using the fact that $\langle 0, b\rangle=0$ for any $b\in B$, we have 
%\begin{align*}
  %  h_t(\lambda)&= h_t(\lambda)e^{(\frac{1}{2}+i\lambda)\langle 0, b \rangle}= \int_\DD k_t(d(0, w))e^{(\frac{1}{2}+i\lambda)\langle w, b \rangle}d\mu(w)=\frac{1}{\sqrt{\cosh{t}}}\int_{B(0,t)}\chi_{t, \sigma}(d(0, w))e^{(\frac{1}{2}+i\lambda)\langle w, b \rangle}d\mu(w)\\
   %  &= \frac{1}{\sqrt{\cosh{t}}}\int_0^{\rho_t}\int_0^{2\pi}\chi_{t, \sigma}(d(0, \rho))e^{(\frac{1}{2}+i\lambda)\langle \rho e^{i\theta}, e^{i\theta_b} \rangle}\frac{4\rho\, d\rho\, d\theta}{(1-\rho^2)^2}\,\,\,\,\,\,\,(\text{where $\rho_t= \tanh{\frac{t}{2}}$})
   % &\leq  \frac{1}{\sqrt{\cosh{t}}}\int_0^{\rho_t}\left(\int_0^{2\pi}e^{\frac{1}{2}\langle \rho e^{i\theta}, e^{i\theta_b} \rangle}\, d\theta\right)\frac{4\rho\, d\rho}{(1-\rho^2)^2}\\
%\end{align*}
%Let $r=\dist_\DD(0, z)= 2\arctanh{(\rho)}$, then $dr= \frac{2}{1-\rho^2}d\rho$ and $\sinh{r}=\frac{2\tanh{(r/2)}}{1-\tanh^2{(r/2)}}=\frac{2\rho}{1-\rho^2}$ gives 
%\[
%\sinh{r}dr=\frac{4\rho\,d\rho}%{(1-\rho^2)^2},
%\]
%which implies
%\[
%h_t(\lambda)=\frac{1}{\sqrt{\cosh{t}}}\int_0^{t}\int_0^{2\pi}\chi_{t, \sigma}(r)e^{(\frac{1}{2}+i\lambda)\langle (r,\theta), b) \rangle}\sinh{r\,dr\,d\theta} = {\color{red}\widehat{k_t}(-\lambda, b)}\,\,\,\, \text{(independent of } b).
%\]

Using this in the above estimate, we have

\begin{align*}
  \iint_{\DDD\times\DD}|K_{\overline{A'}_T}(z,w)|^2\, dz\,dw 
  &\leq C \frac{\Vol(SX_\Gamma) }{(1-1/n)} \frac{1}{T^2} \int_{\lambda\in I'} \int_0^T |h_t(\lambda)|^2 \lambda \tanh({2\pi\lambda}) \,dt d\lambda\\
  &\leq  C \frac{\Vol(SX_\Gamma) }{(1-1/n)} \frac{1}{T^2} \int_0^T \frac{\cosh t-1}{\cosh{t}}dt\leq C \frac{\Vol(SX_\Gamma) }{(1-1/n)} \frac{1}{T^2} \int_0^Tdt \\
  &= C \frac{\Vol(SX_\Gamma) }{(1-1/n)} \frac{1}{T}. 
\end{align*}

\subsection{Estimating the `quantum variance'}\label{subsec: estimating QV}
\iffalse
For $\rho\in C_c^\infty(\RR^+)$, let $\Op(\rho)$ denote the operator corresponding to the symbol $a(z, \lambda, b)=\rho(\lambda)$ (independent of $(z, b)$), and its kernel is given by
\begin{equation}\label{eq:Krho}
K_\rho(u,w)=\frac1{2\pi}\int_{\RR^+\times B}\rho(\lambda) e^{(\frac{1}{2}+i\lambda)\langle u,b\rangle}e^{(\frac{1}{2}-i\lambda)\langle w,b\rangle}\lambda\tanh(2\pi\lambda)\,db\,d\lambda
\end{equation}
\fi

Replacing $a$ with $\overline{a}_T=\frac{1}{T}\int_0^T a_{P_tAP_t}dt$ in \eqref{eq: truncated periodised Op approximation} where $A=\Op_\Gamma(a)$, and considering $\rho(\lambda)$ a smooth bump function supported on $I'\supset I$ and $\rho(\lambda)=1$ when $\lambda\in I$, we use Proposition \ref{thm: truncated periodised Op approximation} 
\begin{align}\nonumber
   &\quad\sum_{j=0}^\infty \left|\langle \psi_j, \Op_{\Gamma, r}(\overline{a}_T\rho)\psi_j \rangle\right|^2 = \sum_{j=0}^\infty \left|\langle \psi_j, \Op_\Gamma(\overline{a}_T)\rho(\lambda_j)\psi_j \rangle+\langle \psi_j, \Op_\Gamma(\overline{a}_T)E_{r, \rho}(\lambda_j)\psi_j \rangle+\langle \psi_j, R_r(\overline{a}_T, \rho)\psi_j \rangle\right|^2\\
\label{e:lowerb}  &\geq \frac{1}{2} \left( \sum_{j=0}^\infty  |\langle \psi_j, \Op_\Gamma(\overline{a}_T)\rho(\lambda_j)\psi_j \rangle|^2 \right) - \sum_{j=0}^\infty|\langle \psi_j, \Op_\Gamma(\overline{a}_T)E_{r,\rho}(\lambda_j)\psi_j \rangle+\langle \psi_j, R_r(\overline{a}_T, \rho)\psi_j \rangle|^2\\
\nonumber    &\geq \frac{1}{2} \left( \sum_{j=0}^\infty  |\langle \psi_j, \Op_\Gamma(\overline{a}_T)\rho(\lambda_j)\psi_j \rangle|^2 \right)- 2\left( \sum_{j=0}^\infty|\langle \psi_j, \Op_\Gamma(\overline{a}_T)E_{r,\rho}(\lambda_j)\psi_j \rangle|^2+\sum_{j=0}^\infty|\langle \psi_j, R_r(\overline{a}_T, \rho)\psi_j \rangle|^2\right),
\end{align}
where the inequality follows from the fact that $\frac{1}{2}|x|^2-|y|^2 \leq |x+y|^2\leq 2(|x|^2+|y|^2)$. Then
\begin{align*}
    &\quad\sum_{j=0}^\infty  |\langle \psi_j, \Op_\Gamma(\overline{a}_T)\rho(\lambda_j)\psi_j \rangle|^2 \\
    &\leq 2 \sum_{j=0}^\infty \left|\langle \psi_j, \Op_{\Gamma, r}(\overline{a}_T\rho)\psi_j \rangle\right|^2 +4\sum_{j=0}^\infty(|\langle \psi_j, \Op_\Gamma(\overline{a}_T)E_{r, \rho}(\lambda_j)\psi_j \rangle|^2+|\langle \psi_j, R_r(\overline{a}_T, \rho)\psi_j \rangle|^2)\\
    &\leq 2\|\Op_{\Gamma, r}(\overline{a}_T\rho)\|^2_{\mathrm{HS}(X_\Gamma)}+ 4(\| \Op_\Gamma(\overline{a}_T)E_{r, \rho}(\lambda_j)\|^2_{\mathrm{HS}(X_\Gamma)}+\|R_r(\overline{a}_T, \rho)\|^2_{\mathrm{HS}(X_\Gamma)})
\end{align*}
Moreover, as mentioned earlier, the action of $P_t$ on any eigenfunction $\psi_j$ with eigenvalue $\nu_j$ of $X_\Gamma$ is given by 
$$P_t\psi_{\nu_j}=h_t(\lambda_j)\psi_{\nu_j},$$
where $\nu_j=\frac{1}{4}+\lambda_j^2$. For some interval ${J'}$, $\nu_j\in {J'}$ whenever $\lambda_j\in I'$. %Slightly abusing the notation, we shall call $\tilde{I'}$ as $I'$.  
Then note that
\begin{align*}
    &\quad\,\,\,\sum_{j=0}^\infty |\langle \psi_j, \Op_\Gamma(\overline{a}_T)\rho(\lambda_j)\psi_j \rangle| = \sum_{j: \nu_j\in J'} |\langle \psi_j, \Op_\Gamma(\overline{a}_T)\rho(\lambda_j)\psi_j \rangle| \geq \sum_{j: \nu_j\in J} |\langle \psi_j, \Op_\Gamma(\overline{a}_T)\rho(\lambda_j)\psi_j \rangle|  \\
    &=\,\sum_{j:\nu_j\in J} \left|\left\langle \psi_j, \left(\frac{1}{T}\int_0^T P_tAP_tdt\right)\psi_j \right\rangle\right|^2
   % =  \sum_{j:\nu_j\in I} \left|\left\langle \psi_j, \left(\frac{1}{T}\int_0^T P_tAP_t \psi_j dt\right)\right\rangle\right|^2\\
  %  &=  \sum_{j:\nu_j\in I} \left|\left\langle \psi_j, \left(\frac{1}{T}\int_0^T P_tAh_t(\lambda_j)\psi_j dt\right)\right\rangle\right|^2
  % = \sum_{j:\nu_j\in I} \left|\frac{1}{T}\int_0^T\left\langle \psi_j,  h_t(\lambda_j)P_tA\psi_j \right\rangle dt\right|^2\\
   % &= \sum_{j:\nu_j\in I} \left|\frac{1}{T}\int_0^T\left\langle P_t\psi_j,  h_t(\lambda_j)A\psi_j \right\rangle dt\right|^2
    =\sum_{j:\nu_j\in J} \left|\frac{1}{T}\int_0^T\left\langle h_t(\lambda_j)\psi_j,  h_t(\lambda_j)A\psi_j \right\rangle dt\right|^2\\
    &= \sum_{j:\nu_j\in J} \left|\frac{1}{T}\int_0^T h_t(\lambda_j)^2 dt\right|^2 |\langle \psi_j,  A\psi_j \rangle|^2.
\end{align*}
From Proposition \ref{prop: MS Prop 2}, we know that 
$$\inf_{j:\nu_j\in J}\left|\frac{1}{T}\int_0^T h_t(\lambda_j)^2 dt\right|^2\geq C_I^2,$$
which implies
%Then, under the assumption that $\int_{SX_\Gamma}a_\lambda(z, b)=0$ for every $\lambda\in \RR^+$, from the above Hilbert-Schmidt norm estimate \eqref{ineq: LMS- PDO HS norm}, we have following estimate of the quantum variance. 
\begin{align*}
 &\sum_{j:\nu_j\in J}  |\langle \psi_j,  A\psi_j \rangle|^2 \leq \frac{1}{C_I^2} \sum_{j=0}^\infty |\langle \psi_j, \Op_\Gamma(\overline{a}_T)\rho(\lambda_j)\psi_j \rangle|\\
 & \lesssim_I \|\Op_{\Gamma, r}(\overline{a}_T\rho)\|^2_{\mathrm{HS}}+ 2\left(\| \Op_\Gamma(\overline{a}_T)E_{r, \rho}(\lambda_j)\|^2_{\mathrm{HS}}+\|R_r(\overline{a}_T, \rho)\|^2_{\mathrm{HS}}\right)\\
 &\leq \|\Op_{\Gamma, r}(\overline{a}_T\rho)\|^2_{\mathrm{HS}}+ 2\left(\| \Op_\Gamma(\overline{a}_T)\|^2_{\mathrm{HS}}|E_{r, \rho}(\lambda_j)|+\|R_r(\overline{a}_T, \rho)\|^2_{\mathrm{HS}}\right)\\
  &\leq \sup_{\lambda\in I'}\|\pa^2_\theta a_\lambda\|_{L^2(SX_\Gamma)} \frac{\Vol(SX_\Gamma) }{(1-1/n)} \frac{1}{T} + 
   \frac{e^{2r}}{l_{X_\Gamma}}\operatorname{Vol}\{z\in X_\Gamma:\operatorname{InjRad}_{X_\Gamma}(z)<r\}\sup_{(z,w)\in \DDD\times \mathbb{\DD}}|K_{\overline{a}_T\rho}(z,w)|^2\\
   &~~~~ +2\|\overline{A}_T\|^2_{\mathrm{HS}}\mathcal{O}\left(\frac{1}{(1+r)^2}\right)+ 2\left(\frac{S_T}{r}\right)^2\left(\sup_{(z,w)\in\DDD\times\DD}|K_{\overline{a}_T}(z,w)|^2\right)\left(\int_0^\infty|\rho(\lambda)|^2\lambda\tanh(2\pi\lambda)d\lambda\right)\\ 
   &\qquad\qquad\qquad\qquad\qquad\qquad\qquad \times e^{2S_T}\left(\Vol(X_\Gamma)+\frac{e^{(r+S_T)}}{l_{X_\Gamma}}\Vol\{z\in X_\Gamma:\InjRad_{X_\Gamma}(z)<r+S_T\}\right)
\end{align*}
where $K_{\overline{a}_T}$ is supported within a distance $S_T=2T+S$ from the diagonal. Using Cauchy-Schwarz, 
\[
|K_{\overline{a}_T\rho}(z,w)|^2\leq \int_\DD|K_{\overline{a}_T}(z,y)|^2d\mu(y)\int_{B(z, S_T)} |K_\rho(y, w)|^2d\mu(y)\leq C_\rho e^{2S_T}\sup_{\DDD\times \DD}|K_{\overline{a}_T}|^2,
\]
where $C_\rho$ is obtained using Plancherel identity and the fact that $K_\rho$ is radial and is the inverse Selberg transform of $\rho\in C_c^\infty(\RR^+)$.

Then, dividing both sides of the above estimate by $\Vol(X_\Gamma)$, we have
\begin{multline*}
    \frac{1}{\Vol(X_\Gamma)} \sum_{\nu_j\in J}\left|\langle \psi_j,  A\psi_j \rangle\right|^2 \\
    \lesssim_I  \frac{\sup_{\lambda\in I'}\tnorm{\pa^2_\theta a_\lambda}^2_2}{(1-1/n)} \frac{1}{T} + \frac{e^{2r+2S_T}}{l_{X_\Gamma}}\frac{\Vol\{z\in X_\Gamma:\InjRad_{X_\Gamma}(z)<r\}}{\Vol(X_\Gamma)}C_\rho\sup_{\DDD\times \DD}|K_{\overline{a}_T}|^2\\
    +2\left(\frac{S_T}{r}\right)^2\sup_{\DDD\times \DD}|K_{\overline{a}_T}|^2 \|K_\rho\|^2_{2}\frac{e^{(r+2S_T)}}{l_{X_\Gamma}}\frac{\Vol\{z\in X_\Gamma:\InjRad_{X_\Gamma}(z)<r+S_T\}}{\Vol(X_\Gamma)}\\
   +2\left(\frac{S_T}{r}\right)^2e^{2S_T}\sup_{\DDD\times \DD}|K_{\overline{a}_T}|^2 \|K_\rho\|^2_{2}  +\frac{2}{\Vol(X_\Gamma)}\|\overline{A}_T\|^2_{\mathrm{HS}}\mathcal{O}\left(\frac{1}{(1+r)^2}\right)
\end{multline*}
Finally, combining the second and third terms on the right-hand side of the above inequality, we have 
\begin{multline}\label{ineq: QV estimate_zero-mean symbol}
    \frac{1}{\Vol(X_\Gamma)} \sum_{\nu_j\in J}\left|\langle \psi_j,  A\psi_j \rangle\right|^2  
    \lesssim_I  \frac{\sup_{\lambda\in I'}\tnorm{\pa^2_\theta a_\lambda}^2_2}{(1-1/n)} \frac{1}{T}\\ + 
    2\left(1+\left(\frac{S_T}{r}\right)^2\right)C'_\rho\sup_{\DDD\times \DD}|K_{\overline{a}_T}|^2 \frac{e^{2(r+S_T)}}{l_{X_\Gamma}}\frac{\Vol\{z\in X_\Gamma:\InjRad_{X_\Gamma}(z)<r+S_T\}}{\Vol(X_\Gamma)}\\
   +2\left(\frac{S_T}{r}\right)^2e^{2S_T}\sup_{\DDD\times \DD}|K_{\overline{a}_T}|^2 \|K_\rho\|^2_{2}  +\frac{2}{\Vol(X_\Gamma)}\|\overline{A}_T\|^2_{\mathrm{HS}}\mathcal{O}\left(\frac{1}{(1+r)^2}\right),
\end{multline}
where $C_\rho'= C_\rho+ \|K_\rho\|^2_2$. The above gives an estimate on the quantum variance under the assumption that $\int_{SX_\Gamma}a_\lambda(z, b)=0$.

\section{Proof of the main theorem}\label{sec: proof_main_thm}

Consider a sequence of closed hyperbolic surfaces $\{X_p\}_{p\in \NN}$ with orthonormal eigendata $(\nu_{j,p}, \psi_{j,p})$ corresponding to the Laplacian $-\Delta_{X_p}$. We assume that
    \begin{enumerate}
    \item $X_p:=\Gamma_p\backslash\DD$ converges in the sense of Benjamini-Schramm to $\DD$.
    \item The injectivity radii are uniformly bounded below by $l_{\min}$, i.e., $\InjRad(X_p)\geq l_{\min}$ for all $p\in \NN$ for some $l_{\min}>0$.
    \item There is a uniform lower bound of the spectral gap of the Laplacian on $X_p$, i.e. there exists some $\eta$ such that $\nu_{1,p}\geq \eta$ for all $p$. 
    \end{enumerate}

 Recall the condition for $\mathrm{BS}$-convergence
\[
    \lim_{p\to \infty} \frac{\Vol(\{z\in X_p: \InjRad_{X_p}(z)<R\})}{\Vol(X_p)}=0 \qquad\text{ for every } R.
\]
\iffalse
is equivalent to saying that there exists $R_m\to \infty$ and $\alpha_m\to 0$ such that 
\[
 \frac{\Vol(\{z\in X_n: \InjRad_{X_m}(z)<R_m\})}{\Vol(X_m)}\leq \alpha_m.
\]
\fi
%Consider a sequence $\{r_m\}\to\infty$ (choice of $r_m$ determined later) as $m\to \infty$. 
Since $1/l_{X_p}\leq 1/l_{\min}$, and the factor $(1-1/n)$ depends only on the spectral gap which has a uniform lower bound, we know from Subsection \ref{subsec: estimating QV} that for each $p$ and fixed $T$, under the assumption $\int_{SX_p} {a_p}_\lambda(z, b)=0$, 
\begin{multline*}
   \frac{1}{\Vol(X_p)} \sum_{\lambda_j\in I}\left|\langle \psi_j,  A_p\psi_j \rangle\right|^2 
    \lesssim_I \frac{\sup_{\lambda\in I'}\tnorm{\pa^2_\theta {a_p}_\lambda}^2_2}{\varrho(\eta)} \frac{1}{T}\\
     +  \left(1+\left(\frac{S_T}{r}\right)^2\right)C'_\rho\|K^2_{\overline{a_p}_T}\|_\infty \frac{e^{2(r+S_t)}}{l_{\min}}\frac{\Vol\{z\in X_p:\InjRad_{X_p}(z)<r+S_T\}}{\Vol(X_p)}\\
   +\left(\frac{S_T}{r}\right)^2e^{2S_T}\|K_{\overline{a_p}_T}^2\|_\infty \|K_\rho\|^2_{2}  +\frac{1}{\Vol(X_p)}\|\overline{A_p}_T\|^2_{\mathrm{HS}}\mathcal{O}\left(\frac{1}{(1+r)^2}\right).
\end{multline*}

\subsection{Quantum variance estimate for general $A_p$ (with truncation)} We now indicate how to obtain the quantum variance estimate for a general $A_p=\Op_\Gamma(a_p)$ (without the  ${a_p}_\lambda$ mean-zero assumption). Define 
\[
M_p(\lambda)= \frac{1}{\Vol(SX_p)} \int_{SX_p}a_p(z,b, \lambda)e^{\langle z, b\rangle}d\mu(z)db.
\]
Let $m_p(\lambda)\in C_c^\infty(\RR^+)$ with $\supp m_p=I'$ (same as the $\supp \rho$ in Subsection \ref{subsec: square-integ of trunc avg op}) such that $m_p(\lambda)=M_p(\lambda)$ for any $\lambda\in I$. In particular, choose $m_p(\lambda)=\beta(\lambda)M_p(\lambda)$ with $\beta\in C_c^\infty(I')$ a non-increasing bump function and $\beta(\lambda)=1$ for $\lambda\in I$.

Since the kernel $K_{m_p}$ of $\Op(m_p):= m_p\left(\sqrt{-\Delta_\DD-\frac{1}{4}}\right)$ is not generally compactly supported, we have the previously discussed summability issues when defining the kernel of $\Op_\Gamma(m_p)$. So, we again work with the truncated operators. Fix $s>0$ and define
\[
K_{m_p, s}(z, w):=K_{m_p}(z, w)\chi\left(\frac{d(z, w)}{s}\right).
\]
Then the operator $\Op_s(m_p)$ defined with the kernel $K_{m_p, s}$ is supported at a distance $s$ from the diagonal. This allows us to push the kernel onto the quotient $X_p$ and define $\Op_{\Gamma_p, s}(m_p)$ whose kernel is $K^{\Gamma_p}_{m_p,s}$ defined as 
\[
K^{\Gamma_p}_{m_p, s}(z,w):=\sum_{\gamma\in\Gamma}K_{m_p}(z,\gamma w)
\chi\left(\frac{d(z,\gamma w)}{s}\right),
\]
with $z, w\in \DDD$. For notational convenience, we temporarily drop the $p$ suffix and use the notations on $X_\Gamma$ instead, but we shall restore it when writing the final estimate.

Note that $\Op_s(m)$ is a radial operator and hence can be written as a spectral multiplier using the spherical transform. Choose the function $m_s$ such that 
\[
\Op_s(m)= m_s\left(\sqrt{-\Delta_\DD-\frac{1}{4}}\right)= \Op(m_s).
\]
Note that $m_s$ is no longer compactly supported, although $m$ was; but $\Op(m_s)$ satisfies the finite propagation property. 
Define on $X_\Gamma$
\[
{A}_s:=A- m_s\left(\sqrt{-\Delta_{X_\Gamma}-\frac{1}{4}}\right)=A-\Op_\Gamma(m_s)
\]
with the corresponding symbol $a_s(z, \lambda, b)= a(z, \lambda, b)-m_s(\lambda)$. For any eigenfunction $\psi_j$ on $X_\Gamma$, since $m_s\left(\sqrt{\Delta_{X_\Gamma}- \frac14}\right)\psi_j= m_s(\lambda_j)\psi_j$, we have
 
\[
\langle \psi_j, {A}_s\psi_j \rangle = \langle \psi_j, A\psi_j \rangle - m_s(\lambda_j).
\]

We keep the former notation $\overline{a}_T$ to denote the symbol corresponding to $\overline{A}_T= \frac{1}{T}\int_0^T P_tAP_tdt$. Consider the operator
\[
B_{T,s}=\frac{1}{T}\int_0^T P_t m_s\left(\sqrt{-\Delta_{X_\Gamma}- \frac{1}{4}}\right)P_tdt
\]
whose symbol is given by 
\[
\overline{m_s}_T(\lambda)= m_s(\lambda)\frac{1}{T}\int_0^T h_t(\lambda)^2dt:=m_s(\lambda)H_T(\lambda)
\]
and 
$\Op_\Gamma(\overline{m_s}_T)=B_{T,s}$. \iffalse{\color{blue}Also define the operator $B_T$ on $L^2(X_\Gamma)$
\[
B_T= \frac{1}{T}\int_0^T P_t m\left(\sqrt{\Delta_{X_\Gamma}- \frac{1}{4}}\right)P_tdt.
\]
Because of the summability issue described above, in general $\Op_\Gamma(\overline{m}_T)\neq B_T$ where $\overline{m}_T(\lambda)=m(\lambda)H_T(\lambda)$.
}
\fi
Replacing $A$ with $A_s$ in \eqref{e:lowerb} and using Proposition \ref{prop: MS Prop 2} as before, we have 
\begin{multline}\label{ineq: A_s HS-estimate}
    \sum_{j:\nu_j\in J}  |\langle \psi_j,  A_s\psi_j \rangle|^2 \leq \frac{1}{C_I^2} \sum_{j=0}^\infty |\langle \psi_j, (\Op_\Gamma(\overline{a}_T)-\Op_\Gamma(\overline{m_s}_T))\rho(\lambda_j)\psi_j \rangle|^2
    \\
    \lesssim_I  \|\Op_{\Gamma, r}((\overline{a}_T-\overline{m_s}_T)\rho)\|^2_{\mathrm{HS}}+ \| \Op_\Gamma(\overline{a}_T-\overline{m_s}_T)\|^2_{\mathrm{HS}}|E_{r, \rho}(\lambda_j)|+\|R_r(\overline{a}_T-\overline{m_s}_T, \rho)\|^2_{\mathrm{HS}}.
\end{multline}

Recall that in the mean-zero case, we applied Nevo's theorem to get an estimate on the first term for which we used the assumption \eqref{condition A1}. Note that for the symbol $a_s$, \eqref{condition A1} does not hold. So, we need to consider $a-m(\lambda)$, and for the first term, decompose 
\[
\overline{a}_T-\overline{m_s}_T= (\overline{a}_T-\overline{m}_T)+ (\overline{m}_T-\overline{m_s}_T).
\]
Therefore, 
\[
\|\Op_{\Gamma, r}((\overline{a}_T-\overline{m_s}_T)\rho)\|^2_{\mathrm{HS}}\leq 2\|\Op_{\Gamma, r}((\overline{a}_T-\overline{m}_T)\rho)\|^2_{\mathrm{HS}} + 2\|\Op_{\Gamma, r}((\overline{m}_T-\overline{m_s}_T)\rho)\|^2_{\mathrm{HS}}.
\]
Moreover, since $\Op(\overline{a_T}-\overline{m_s}_T)$ has finite propagation, 
\[
\| \Op_\Gamma(\overline{a}_T-\overline{m_s}_T)\|^2_{\mathrm{HS}}\leq 2
\| \Op_\Gamma(\overline{a}_T)\|^2_{\mathrm{HS}}|+
2\|\Op_\Gamma( \overline{m_s}_T)\|^2_{\mathrm{HS}}.
\]
\iffalse
{\color{blue}
We make a subtle observation that the decomposition of the symbol above does not work on the second term immediately, since both factors can't automatically be pushed forward to the quotient space. So, for the second term, we use the following decomposition of the operator directly on the quotient space:  
\begin{align*}
     \| \Op_\Gamma(\overline{a}_T-\overline{m_s}_T)\|^2_{\mathrm{HS}}=\| \Op_\Gamma(\overline{a}_T)-B_{T, s}\|^2_{\mathrm{HS}}&=\| \Op_\Gamma(\overline{a}_T)-B_T+B_T-B_{T, s}\|^2_{\mathrm{HS}}\\
    &\leq2\| \Op_\Gamma(\overline{a}_T)-B_T\|^2_{\mathrm{HS}}+2\| B_T-B_{T, s}\|^2_{\mathrm{HS}}
\end{align*}
}
\fi
So, 
\begin{multline}\label{ineq: modified HS-estimate with errors}
    \sum_{j:\nu_j\in J}  |\langle \psi_j,  A_s\psi_j \rangle|^2 \leq \frac{1}{C_I^2} \sum_{j=0}^\infty |\langle \psi_j, (\Op_\Gamma(\overline{a}_T)-\Op_\Gamma(\overline{m_s}_T))\rho(\lambda_j)\psi_j \rangle|\\
    \lesssim_I  \|\Op_{\Gamma, r}((\overline{a}_T-\overline{m}_T)\rho)\|^2_{\mathrm{HS}} + \|\Op_{\Gamma, r}((\overline{m}_T-\overline{m_s}_T)\rho)\|^2_{\mathrm{HS}}\\
    + (\| \Op_\Gamma(\overline{a}_T)\|^2_{\mathrm{HS}}+
\|\Op_\Gamma( \overline{m_s}_T)\|^2_{\mathrm{HS}}) |E_{r,\rho}(\lambda_j)|+\|R_r(\overline{a}_T-\overline{m_s}_T, \rho)\|^2_{\mathrm{HS}}.
\end{multline}
Since $m(\lambda)$ is independent of $\theta$, replacing $a$ by $(a(z, \lambda, b)-m(\lambda))$ does not change the principal term involving $\pa^2_\theta a$ coming from the first term in \eqref{ineq: modified HS-estimate with errors}.

%{\color{red} Why do we need %$B_T$ ? Can't we just write
%$$\| \Op_\Gamma(\overline{a}_T-%\overline{m_s}_T)\|^2_{\mathrm{HS}}\leq 2
%\| \Op_\Gamma(\overline{a}_T)\|^2_{\mathrm{HS}}|+
%2\| \overline{m_s}_T\|^2_{\mathrm{HS}}

%and apply to (2.11) to $\| %\overline{m_s}_T\|^2_{\mathrm{HS}}$ with $R$ replaced by $s$? Here we can use the finite propagation with $R$ replaced by $s$, this term will disappear at the limit because of Benjamini-Schramm convergence.}
%\SSa{ Added a few lines above %and below these comments, removing $B_T$. I also replaced the $E_r$ with $E_{r, \rho}$ in some remaining places and $R_r(a)$ with $R_r(a, \rho)$. I will go through it again later and check if I missed any spots.}

Moreover, using \eqref{ineq: LMS- PDO HS norm} and the fact that $\Op_{\overline{m_s}_T}$ is radial, we have that 
\begin{align*}
    \|\Op_\Gamma(\overline{m_s}_T)\|^2_{\mathrm{HS}} 
    &\leq \int_\DDD\int_\DD |K_{\overline{m_s}_T}(w, z)|^2d\mu(w)d\mu(z)\\ 
    &\qquad\qquad + \frac{e^{2(s+2T)}}{l_{\min}}\operatorname{Vol}\{z\in X:\operatorname{InjRad}_{X_\Gamma}(z)<s+2T\}\,\,\sup_{(z,w)\in \DDD\times \mathbb{\DD}}|K_{\overline{m_s}_T}(z,w)|^2
 \\ &= \Vol(X_\Gamma)
 \int_\DD |K_{\overline{m_s}_T}(0, w)|^2d\mu(w) \\ 
 & \qquad\qquad+ \frac{e^{2(s+2T)}}{l_{\min}}\operatorname{Vol}\{z\in X:\operatorname{InjRad}_{X_\Gamma}(z)<s+2T\}\,\,\sup_{(z,w)\in \DDD\times \mathbb{\DD}}|K_{\overline{m_s}_T}(z,w)|^2.
\end{align*}
Dividing both sides with $\Vol(X_\Gamma)$ we get
\begin{multline}
   \frac{1}{\Vol(X_\Gamma)} \|\Op_\Gamma(\overline{m_s}_T)\|^2_{\mathrm{HS}} 
    \leq \int_\DD |K_{\overline{m_s}_T}(0, w)|^2d\mu(w)\\ + 
\label{e:Opmst}\frac{e^{2(s+2T)}}{l_{\min}}\frac{\operatorname{Vol}\{z\in X_\Gamma:\operatorname{InjRad}_{X_\Gamma}(z)<s+2T\}}{\Vol(X_\Gamma)}\,\,\sup_{(z,w)\in \DDD\times \mathbb{\DD}}|K_{\overline{m_s}_T}(z,w)|^2.
\end{multline}

Using these observations, we now rewrite \eqref{ineq: modified HS-estimate with errors} in terms of $A_p=\Op_\Gamma(a_p)$ defined on $X_p$ with fundamental domain $\DDD_p$.
\begin{multline}\label{ineq: QV general A_p trucnated}
     \frac{1}{\Vol (X_p)} \sum_{j:\nu_{j,p}\in J}  |\langle \psi_{j,p},  A_p\psi_{j,p} \rangle- {m_p}_s(\lambda_{j,p})|^2 \lesssim_I \frac{\sup_{\lambda\in I'}\tnorm{\pa^2_\theta {a_p}_\lambda}^2_2}{\varrho(\eta)} \frac{1}{T}\\
     + C_\rho\frac{ e^{2r+2S_T}}{l_{\min}}\frac{\Vol\{z\in X_p:\operatorname{InjRad}_{X_p}(z)<r\}}{\Vol (X_p)}\sup_{(z,w)\in \DDD_p\times \mathbb{\DD}}|K_{(\overline{a_p}_T-\overline{{m_p}_s}_T)}(z,w)|^2 +  \frac{\|\Op_{\Gamma_p, r}((\overline{m_p}_T-\overline{{m_p}_s}_T)\rho)\|^2_{\mathrm{HS}}}{\Vol(X_p)}\\
     +  \left(\int_\DD |K_{\overline{{m_p}_s}_T}(0, w)|^2d\mu(w) + 
\frac{e^{2(s+2T)}}{l_{\min}}\frac{\operatorname{Vol}\{z\in X_p:\operatorname{InjRad}_{X_p}(z)<s+2T\}}{\Vol(X_p)}\,\,\sup_{(z,w)\in \DDD_p\times \mathbb{\DD}}|K_{\overline{{m_p}_s}_T}(z,w)|^2\right) \\
\times \mathcal{O}\left(\frac{1}{(1+r)^2}\right) + \frac{1}{\Vol(X_p)}\|\overline{A_p}_T\|^2_{\mathrm{HS}}\mathcal{O}\left(\frac{1}{(1+r)^2}\right)+ \frac{1}{\Vol(X_p)}\|R_r(\overline{a_p}_T-\overline{{m_p}_s}_T, \rho)\|^2_{\mathrm{HS}}.
\end{multline}

\subsection{Proof of quantum ergodicity on large surfaces}
From Lemma \ref{lem: sup norm bound of avg op}, we have that 
\begin{equation}
    \sup_{(z, w)\in \DDD_p\times \DD} |K_{P_tA_pP_t}(z, w)|^2\leq C(t, \sigma, k),
\end{equation}
which implies $ \|K^2_{\overline{a_p}_T}\|_\infty$ is uniformly bounded in $p$.  Also, from the definition, we know that $\Op({m_p}_s)=\Op_s(m_p)$ whose kernel is given by $K_{m_p, s}$ and for every $z, w\in \DD$
\[
|K_{m_p,s}(z, w)|\leq |K_{m_p}(z, w)|  \qquad \text{for each $p$}.
\]
Since $m_p\in C_c^\infty(\RR^+)$, using Appendix \ref{app: radial kernel decay} with $N=0$, for every $z, w\in \DD$ we have that 
\[
|K_{m_p}(z, w)|^2\leq C_0(m_p)e^{-d(z, w)}\leq C_0(m_p),
\]
with
\[
C_0(m_p)\leq C_{0,I'}\|m_p\|_{L^\infty(I')},
\]
%{\color{red}{Give precise reference for the previous line.}} \SSa{The expression for $C_0(m_p)$ comes from a step inside the proof of Appendix B. $C_{0, I'}$ comes from considering the integral with $c(\lambda)$, $c(-\lambda)$ and the remainder term}
where $C_{0, I'}$ is obtained by taking the minimum of $C^+_{N, m_p}, C^-_{N, m_p}$, and $C'_{m_p}$ that shows up in \eqref{ineq: uniform C_0,I}, \eqref{ineq: uniform C_0-I}, and \eqref{ineq: uniform C_0I} respectively with $N=0$. Now recall that
\[
a_p(z, \lambda, b)= e^{-(\frac{1}{2}+i\lambda)\langle z, b \rangle}\int_\DD K_{A_p}(z, w)e^{(\frac{1}{2}+i\lambda)\langle w, b \rangle}d\mu(w)= A_p(f_{z, \lambda,b}(w)),
\]
with $f_{z, \lambda, b}(w)=e^{(\frac{1}{2}+i\lambda)(\langle w, b \rangle-\langle z, b \rangle)}$. Since $K_{A_p}$ has finite propagation $S$, using assumption \eqref{ineq: local assumption},
\[
|a_p(z, \lambda, b)|\leq C\|f_{z, \lambda, b}\|_{C^k(B(z, S))},
\]
and for any $w\in B(z, S)$, $|\langle w, b \rangle-\langle z, b \rangle|\leq d(z, w)\leq S$. So $|a_p(z, \lambda, b)|$ is uniformly bounded in $p$. Then
\[
|m_p(\lambda)|=|\beta(\lambda)||M_p(\lambda)|\leq \frac{1}{\Vol(SX_P)}\int_{SX_p}|a_p(z,b, \lambda)|e^{\langle z, b\rangle}d\mu(z)db
\]
is uniformly bounded in $p$ which allows $|K_{m_p}|$ to have a uniform bound. These  bounds imply that 
\begin{equation}
    \sup_{(z,w)\in \DDD_p\times \mathbb{\DD}}|K_{(\overline{a_p}_T-\overline{{m_p}_s}_T)}(z,w)|^2 < C_{0, T} \qquad \text{for every $p$}, 
\end{equation}
 where $C_{0,T}$ is independent of $p$ and $s$. %{\color{red}{but depends on $T$, right ? Because of $K_{(\overline{a_p}_T}$. Don't we want to have the $T$-dependence in the notation, for safety ?}}. \SSa{Yes, I ave added a $T$ to the notations.} 
 In addition,
\[
K_{m_p, s}-K_{m_p}=K_{m_p}(z, w)(\chi(d(z, w)/s)-1),
\]
which is supported on the tail $d(z,w)>s$. Again, using Appendix \ref{app: radial kernel decay} with $N=1$, we have 
\[
|K_{m_p}(z, w)|\leq C_1(m_p)e^{-d(z, w)/2}(1+d(z, w))^{-1}.
\]
Since $C_1(m_p)$ has a uniform upper bound using a similar argument as above,
\[
\sup_{(z,w)\in \DDD_p\times \mathbb{\DD}}|K_{({{m}_p}-{{m_p}_s})}(z,w)|^2 < C_1 \qquad \text{for every $p$},
\]
and
\[
\int_\DD |K_{m_p, s}(0, w)-K_{m_p}(0, w)|^2d\mu(w)\leq C_1 \int_s^\infty e^{-r}(1+r)^{-2}\sinh r dr
\]
and as $s\longrightarrow +\infty$, we have using Plancherel
\begin{equation}
    \|{m_p}_s-m_p\|_{L^2(\RR^+, \lambda\tanh{(2\pi \lambda)}d\lambda)}\longrightarrow 0.
\end{equation}

Finally, we also note,
\begin{equation}
    \|\overline{A_p}_T\|^2_{\mathrm{HS}}\leq \Vol(X_p)e^{S_T} \sup_{\DD\times\DD}|K_{\overline{a_{p}}_T}|^2= C_{2,T}\Vol(X_p) <\infty,
\end{equation}
with $C_{2, T}$ independent of $p,s$ and by assumption \eqref{ineq: local assumption}, $C_{I'}:=\sup_{\lambda\in I'}\tnorm{\pa^2_\theta {a_p}_\lambda}^2_2$ is bounded above uniformly in $p$.

\subsubsection{Taking limit $p\longrightarrow +\infty$} Fixing $r, s$, and $T$, the second and fifth terms on the right-hand side of the above estimate go to zero as $p\longrightarrow \infty$ since
\[
\lim_{p\longrightarrow \infty}\frac{\Vol\{z\in X_p:\InjRad_{X_p}(z)<r+S_T\}}{\Vol(X_p)}= 0,
\]
from the definition of $\mathrm{BS}$-convergence. For the third term in \eqref{ineq: QV general A_p trucnated}, set 
\[
q_{p, s}(\lambda)= (\overline{m_p}_T(\lambda)-\overline{{m_p}_s}_T(\lambda))\rho(\lambda),
\]
which has compact support, the same as $\supp \rho$. Using Lemma \ref{lem: truncated kernel HS-norm estimate},
\begin{multline*}
    \frac{1}{\Vol(X_p)}\|\Op_{\Gamma_p, r}((\overline{m_p}_T-\overline{{m_p}_s}_T)\rho)\|^2_{\mathrm{HS}}\lesssim \int_\DD |K_{q_{p,s}}(0, w)|^2d\mu(w) \\+ \frac{e^{2r}}{l_{\min}} \frac{\mathrm{Vol}\{x\in X_p:\mathrm{InjRad}_{X_p}(x)<r\}}{\Vol(X_p)} \sup_{(z,w)\in \DDD_p\times\DD}|K_{q_{p,s}}(z,w)|^2,
\end{multline*}
where the second term above goes to zero as $p\longrightarrow \infty$. For the final term, 
\begin{multline*}
     \frac{1}{\Vol(X_p)}\|R_r(\overline{a_p}_T-\overline{{m_p}_s}_T, \rho)\|^2_{\mathrm{HS}}=  \left(\frac{S_T}{r}\right)^2e^{2S_T}\sup_{\DDD_p\times \DD}|K_{\overline{a_p}_T-\overline{{m_p}_s}_T}|^2\|K_\rho\|^2_2  \\
    + \left(\frac{S_T}{r}\right)^2e^{2S_T}\sup_{\DDD_p\times \DD}|K_{\overline{a_p}_T-\overline{{m_p}_s}_T}|^2\|K_\rho\|^2_2 \frac{e^{(r+S_T)}}{l_{\min}} \frac{ \Vol\{z\in X_p:\InjRad_{X_p}(z)<r+S_T\}}{\Vol(X_p)}.
\end{multline*}
where the second term on the right goes to zero as $p\longrightarrow \infty$. These imply
\begin{multline}\label{ineq: QE with limpt p}
 \limsup_{p\longrightarrow \infty}  \frac{1}{\Vol (X_p)} \sum_{j:\nu_j\in J}  |\langle \psi_{j, p},  A_p\psi_{j,p} \rangle- {m_p}_s(\lambda_j)|^2 
    \lesssim_I \frac{C_{I'}}{\varrho(\eta)} \frac{1}{T}+\limsup_{p\longrightarrow \infty} \int_\DD |K_{q_{p,s}}(0, w)|^2d\mu(w)\\
  +\mathcal{O}\left(\frac{1}{(1+r)^2}\right) \limsup_{p\longrightarrow \infty}\int_\DD |K_{\overline{{m_p}_s}_T}(0, w)|^2d\mu(w) 
  +\left(\frac{S_T}{r}\right)^2e^{2S_T}C_{0,T}\|K_\rho\|^2_{2}  +C_{2,T}\mathcal{O}\left(\frac{1}{(1+r)^2}\right).
\end{multline}

\subsubsection{Taking limit $s\longrightarrow +\infty$} Now, we fix $r$ and $T$  in \eqref{ineq: QE with limpt p}. The last two terms are independent of $s$.  
 For the second term in \eqref{ineq: QE with limpt p}, applying Plancherel to the symbol $q_{p, s}$ depending only on $\lambda$ (up to a normalisation factor), we have that 
\[
\int_\DD |K_{q_{p, s}}(0, w)|^2d\mu(w)= \int_0^\infty |H_T(\lambda)|^2 |m_p(\lambda)-{m_p}_s(\lambda)|^2 \rho(\lambda)^2 \lambda\tanh{(2\pi \lambda)}d\lambda,
\]
where the integrand is supported over $I'$. For fixed $T$, $H_T(\lambda)$ is bounded on $I'$. Since, for every $p$, ${m_p}_s$ converges to $m_p$ in $L^2(\RR^+, \lambda\tanh{(2\pi \lambda)}d\lambda)$ as $s\longrightarrow+\infty$ from above, we have %because the same holds for the corresponding kernels (thanks to the result of Appendix B and to Plancherel). Then, we have 
\[
\int_\DD |K_{q_{p,s}}(0, w)|^2d\mu(w)= \lim_{s\longrightarrow \infty}\|q_{p,s}\|^2_{L^2(\RR^+, \lambda\tanh{(2\pi \lambda)}d\lambda)}\longrightarrow 0.
\]

%Applying Plancherel to the symbol $q_s$ depending only on $\lambda$ as before, the corresponding radial kernels converge to 0 in $L^2(\DDD\times \DD)$ %{\color{red}{Problem here. Should be $L^2(\cD\times \DD)$ but maybe with some normalization ? I already corrected the same mistake somewhere else, check it does not appear anywhere else. A radial operator CANNOT be Hilbert-Schmitt on the whole $L^2(\DD\times \DD)$}}. \SSa{Yes, I should have been more careful.} 
%Since $\Op_{\Gamma, r}(q_s)$ is defined by the truncated kernel, its Hilbert-Schmidt norm is controlled by the $L^2$-norm of the radial kernel, and therefore
%\[
%\frac{1}{\Vol (X_\Gamma)} \|\Op_{\Gamma, r}((\overline{m}_T-\overline{m_s}_T)\rho)\|^2_{\mathrm{HS}}\longrightarrow 0.
%\]

For the third term, up to a normalisation factor, we have that 
\[
\int_\DD |K_{\overline{{m_p}_s}_T}(0, w)|^2d\mu(w)= \int_0^\infty |H_T(\lambda)|^2 |{m_p}_s(\lambda)|^2 \lambda\tanh{(2\pi \lambda)}d\lambda.
\]
Recall that $h_t$ is the Fourier transform of the radial function $w\mapsto k_t(d(0, w))=\frac{1}{\sqrt{\cosh{t}}}\chi_{t, \sigma}(d(0, w))$,
\[
h_t(\lambda)=\int_\DD k_t(d(0, w))\varphi_\lambda(d(0, w)) d\mu(w)=\frac{2\pi}{\sqrt{\cosh t}}\int_0^t \chi_{t, \sigma}(r)\varphi_\lambda(r)\sinh r dr,
\]
It is known (see Equation (3.1.5) from Chapter 5 of \cite{bray1994fourier}) that for $\lambda\in \RR^+$ and some constant $C$ such that
\[
|\varphi_\lambda(r)|\leq C\varphi_0(r)\leq Ce^{-r/2}(1+r).
\]
Then 
\[
|h_t(\lambda)|\leq \frac{2\pi}{\sqrt{\cosh t}}\int_0^t |\chi_{t, \sigma}(r)\varphi_\lambda(r)|\sinh r dr\leq  \frac{2\pi}{\sqrt{\cosh t}}\int_0^t |\varphi_\lambda(r)|\sinh r dr
\]
Using the above bound of $\varphi_\lambda$, 
\[
|h_t(\lambda)|\lesssim \frac{1}{\sqrt{\cosh t}}\int_0^t e^{-r/2}(1+r) \sinh r dr  \lesssim  \frac{1}{\sqrt{\cosh t}}\int_0^t e^{r/2}(1+r) dr\lesssim \frac{1}{\sqrt{\cosh t}}te^{t/2}.
\]
But $1/\sqrt{\cosh{t}}\leq e^{-t/2}$. So, 
\[
|H_T(\lambda)|^2 \leq \left(\frac{1}{T}\int_0^T |h_t(\lambda)|^2dt \right)^2\lesssim \left(\frac{1}{T}\int_0^T t^2dt\right)^2 = C_T ~(\text{say}).
\]
Then, for any fixed $T$,
\[
 \int_0^\infty |H_T(\lambda)|^2 |m_s(\lambda)|^2 \lambda\tanh{(2\pi \lambda)}d\lambda\leq C_T \int_0^\infty |{m_p}_s(\lambda)|^2 \lambda\tanh{(2\pi \lambda)}d\lambda \longrightarrow C_T \|{m_p}(\lambda)\|^2_{L^2(\RR^+, \lambda \tanh{2\pi \lambda}d\lambda)}.
\]

\iffalse
{\color{purple}
Try 1: Then, using Cauchy-Schwarz
\begin{align*}
    |h_t(\lambda)|^2&\leq \frac{(2\pi)^2}{\cosh{t}}\left(\int_0^t |\varphi_\lambda(r)|^2\sinh r dr\right)\left(\int_0^t |\chi_{\sigma, t}(r)|^2\sinh r dr\right)\\
    &\leq \frac{(2\pi)^2(\cosh t -1)}{\cosh t}\int_0^t |\varphi_\lambda(r)|^2\sinh r dr.
\end{align*}
Using Lemma 3.1.5 from Chapter 5 of \cite{bray1994fourier}), up to some constant, we have that
\[
\int_0^t |\varphi_\lambda(r)|^2\sinh r dr \cong |c(\lambda)|^2t + o(t),
\]
where $c(\lambda)$ is the Harish-Chandra $c$-function as defined in Appendix \ref{app: radial kernel decay}. Then for large $t$,
\[
H_T(\lambda)= \frac{1}{T} \int_0^T h_t(\lambda)^2dt \lesssim \frac{1}{T} \int_0^T \frac{(\cosh t -1)}{\cosh t} |c(\lambda)|^2 t \, dt\leq |c(\lambda)|^2\frac{T}{2}.
\]
Then 
\[
\int_\DD |K_{\overline{m_s}_T}(0, w)|^2d\mu(w)\leq C_T \int_0^\infty  |c(\lambda)|^4|m_s(\lambda)|^2 \lambda\tanh{(2\pi \lambda)}d\lambda = C_T\int_0^\infty |c(\lambda)|^2|m_s(\lambda)|^2d\lambda,
\]
since $|c(\lambda)|^{-2}= \lambda \tanh{2\pi \lambda}$. 
}
\fi

Finally, taking $s\longrightarrow +\infty$ and interchanging the limits on the left-hand side, we have $m_s(\lambda_j)\to m(\lambda_j)=M(\lambda_j)$ and 
\begin{multline}\label{ineq: QE with limpt s}
\limsup_{p\longrightarrow \infty}  \frac{1}{\Vol(X_p)}\sum_{j:\nu_j\in J} \left |\langle \psi_{j,p},  {A_p}\psi_{j,p}\rangle -\frac{1}{\Vol(SX_p)}\int_{SX_p}a_p(z,\lambda_{j,p}, b)e^{\langle z,b \rangle}d\mu(z)db\right|^2 
    \lesssim_I \frac{C_{I'}}{\varrho(\eta)} \frac{1}{T}
\\  +C_T \limsup_{p\longrightarrow \infty}\|m_p(\lambda)\|^2_{L^2(\RR^+, \lambda \tanh{2\pi \lambda}d\lambda)} \mathcal{O}\left(\frac{1}{(1+r)^2}\right) +\left(\frac{S_T}{r}\right)^2e^{2S_T}C_{0, T} \|K_\rho\|^2_{2}  +C_{2,T}\mathcal{O}\left(\frac{1}{(1+r)^2}\right).
\end{multline}

\iffalse
On $X_\Gamma$, for any eigenfunction $\psi_j$, we have 
\[
(B_T-B_{T, s})\psi_j= q_s(\lambda_j)= (m(\lambda_j)-m_s(\lambda_j))H_T(\lambda_j), 
\]
and 
\[
\|B_T-B_{T, s}\|^2_{\mathrm{HS}}= \sum_{j} |m(\lambda_j)-m_s(\lambda_j)|^2|H_T(\lambda_j)|^2\leq \max_j |H_T(\lambda_j)|^2\sum_{j} |m(\lambda_j)-m_s(\lambda_j)|^2.
\]
Because $m, m_s$ are compactly supported {\color{red}{No, not $m_s$, I'm not sure we can proceed like that.}} \SSa{Yes, I see the mistake.} and $m_s\to m$ uniformly on compact intervals, we have $m_s(\lambda_j)\to m(\lambda_j)$ for each fixed $j$, and only finitely many $j$ contribute to the above sum. Therefore
\[
\|B_T-B_{T, s}\|^2_{\mathrm{HS}}\longrightarrow 0.
\]
\fi
%Finally, letting $s\longrightarrow +\infty$, the left-hand side of \eqref{ineq: modified HS-estimate with errors} converges to 
%\[
%\frac{1}{\Vol(X_\Gamma)}\sum_{j:\nu_j\in J} \left |\langle \psi_j,  {A}\psi_j\rangle -\frac{1}{\Vol(SX_\Gamma)}\int_{SX_\Gamma}a(z,\lambda_j, b)e^{\langle z,b \rangle}d\mu(z)db\right|^2.
%\]

\subsubsection{Taking limit $r\longrightarrow \infty$} Fix $T$ in \eqref{ineq: QE with limpt s}. The left-hand side is independent of $r$ and on the right, the penultimate term is $\mathcal{O}(S_T^2/r^2)$, which goes to zero while $\mathcal{O}(1/(1+r)^2)$ makes the second and final term go to zero. So, 

\begin{equation}\label{ineq: QE with limpt T}
   \limsup_{p\longrightarrow \infty}  \frac{1}{\Vol(X_p)}\sum_{j:\nu_j\in J} \left |\langle \psi_{j,p},  {A_p}\psi_{j,p}\rangle -\frac{1}{\Vol(SX_p)}\int_{SX_p}a_p(z,\lambda_{j,p}, b)e^{\langle z,b \rangle}d\mu(z)db\right|^2  
    \lesssim_I \frac{C_{I'}}{\varrho(\eta)} \frac{1}{T}.
\end{equation}

Now, we use a result analogous to Weyl's law for eigenvalues in a fixed bounded interval for the sequences of surfaces $X_p$.
\begin{Lemma}[Lemma 9.1, \cite{MassonSahlsten17}]
For any compact interval $J\subset (1/4, \infty)$,
    \[
    \lim_{p\to \infty} \frac{N(X_p, J)}{\Vol(X_p)}=\frac{1}{4\pi}\int_{\RR} \chi_I(1/4+s^2)s\tanh{(\pi s)}ds,
    \]
    where $N(X_p, J)$ is the number of eigenvalues in $J$ with multiplicity.
\end{Lemma}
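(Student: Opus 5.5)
The plan is to prove this Weyl-type law with the Selberg pre-trace formula for compactly supported radial kernels. Benjamini--Schramm convergence is used to kill the non-identity terms. An approximation argument then passes from Selberg transforms to the indicator of $I$.

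\emph{Step 1: pre-trace formula.} Let $k\in C_c^\infty([0,\infty))$ be supported in $[0,R]$, and let $h=\mathcal S(k)$ be its Selberg transform (Proposition \ref{prop: radial integral op}). The periodised kernel $\sum_{\gamma}k(d(z,\gamma w))$ is a finite sum, so it defines a trace-class operator on $X_p$ with eigenvalues $h(\lambda_{j,p})$. Here $\lambda_{j,p}\in\RR^+\cup i[0,\tfrac12]$, since small eigenvalues give imaginary $\lambda$. Taking the trace over the fundamental domain $\DDD_p$ gives
\[
\sum_j h(\lambda_{j,p})=\Vol(X_p)\,k(0)+\int_{\DDD_p}\sum_{\gamma\neq e}k(d(z,\gamma z))\,d\mu(z).
\]
For the non-identity term, note that $d(z,\gamma z)\ge 2\InjRad_{X_p}(z)$. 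Hence only points with $\InjRad_{X_p}(z)<R$ contribute. At such points, the lattice-point count used in Lemma \ref{lem: truncated kernel HS-norm estimate} bounds the number of contributing $\gamma$ by $Ce^{R}/l_{\min}$. The error is therefore at most
\[
\|k\|_\infty\,\frac{Ce^R}{l_{\min}}\,\Vol\{\InjRad_{X_p}<R\},
\]
which is $o(\Vol(X_p))$ by Benjamini--Schramm convergence. The inversion formula (item 3 of Proposition \ref{p:spherical_rho} read backwards) gives $k(0)=\frac1{4\pi}\int_\RR h(s)\,s\tanh(\pi s)\,ds$. Consequently
\[
\frac{1}{\Vol(X_p)}\sum_j h(\lambda_{j,p})\;\longrightarrow\;\frac{1}{4\pi}\int_\RR h(s)\,s\tanh(\pi s)\,ds
\]
for every such $h$.

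\emph{Step 2: positivity and tightness.} Define the positive measures $\mu_p=\Vol(X_p)^{-1}\sum_j\delta_{\lambda_{j,p}}$ on $\RR^+\cup i[0,\frac12]$. Take $k=k_1*k_1$, a radial self-convolution of a real compactly supported radial $k_1$. Then $h=h_1^2$, and $h_1$ is real both on $\RR$ and on $i[0,\tfrac12]$, because spherical functions are real there. So $h\ge0$ on the whole possible spectrum. Choosing $k_1$ with $h_1(0)\neq0$ and using the fact that $h_1$ stays bounded away from $0$ near $i[0,\frac12]$ for suitable $k_1$ (approximate identities), Step 1 gives two things. First, $\mu_p(K)$ is uniformly bounded on compact sets. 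Second, the choice $h_1^2\ge c$ on $\{\lambda: \lambda^2+\frac14\le M\}$ gives tightness on bounded spectral windows.

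\emph{Step 3: identifying the limit.} The main obstacle is to show that any vague subsequential limit $\mu$ of $\mu_p$ equals the Plancherel measure $\frac1{4\pi}s\tanh(\pi s)\,ds$ on $\RR^+$ and gives no mass to $i(0,\frac12]$. The available test functions are $\lambda\mapsto h(\lambda)$ for compactly supported smooth radial $k$. As functions of $\nu=\frac14+\lambda^2$, these form an algebra closed under products (convolution of kernels), contain approximations of constants on compacts, and separate points. By Stone--Weierstrass they are uniformly dense in $C_0$ of the compact spectral region, once localised by the positive cutoffs from Step 2. Hence $\mu$ equals the limiting functional from Step 1. That functional charges only $\RR^+$, with the Plancherel density, so in particular $\mu(i(0,\tfrac12])=0$.

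\emph{Conclusion.} The Plancherel density is continuous, so $\partial I$ has limit measure zero. Vague convergence then gives $\mu_p(I)\to\frac1{4\pi}\int\chi_I(\frac14+s^2)\,s\tanh(\pi s)\,ds$, which is the claim. The delicate point I expect is the density argument in Step 3 near the exceptional segment. If Stone--Weierstrass proves awkward there, I would instead squeeze $\chi_I$ between $h_-$ and $h_+$ with $h_\pm$ in the test class, $h_+\ge\chi_I$ everywhere on the spectrum including the exceptional segment, and $h_-\le\chi_I$ with $h_-\le 0$ off $I$. The Plancherel integrals of $h_\pm$ would be made $\varepsilon$-close, using the positivity from Step 2 to control $h_-$ on the exceptional segment.
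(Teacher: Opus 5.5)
The paper gives no proof of this lemma; it is quoted from Le Masson--Sahlsten. Your argument (pre-trace formula for compactly supported radial kernels, Benjamini--Schramm convergence plus the uniform bound $\InjRad(X_p)\ge l_{\min}$ to make the non-identity terms $o(\Vol(X_p))$, and positivity of $h_1^2$ on the whole spectrum, including the exceptional segment, to pass from Selberg transforms to $\chi_J$) is the standard proof behind that cited result, and it is correct apart from one point in Step 3 that needs tightening. Because $\mu_p$ has infinite total mass and the $h$'s are not compactly supported, the ``localisation'' has to be the weighted approximation $|f-wh'|\le\varepsilon w$ on the whole spectrum: approximate $f/w$ uniformly by $h'$ (Stone--Weierstrass in $C_0$), with $w=h_1^2\ge 0$ everywhere and $w>0$ on $\supp f$. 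This gives $\int f\,d\mu_p\to\int f\,d\mu_{\mathrm{Pl}}$ for every $f\in C_c$ directly, without needing tail control to pass to vague limits.
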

In other words, we have that $N(X_p, J)\sim_I \Vol(X_p)$, which gives
\[
\limsup_{p\longrightarrow \infty}  \frac{1}{N(X_p, J)}\sum_{j:\nu_j\in J} \left |\langle \psi_{j,p},  {A_p}\psi_{j,p}\rangle -\frac{1}{\Vol(SX_p)}\int_{SX_p}a_p(z,\lambda_{j,p}, b)e^{\langle z,b \rangle}d\mu(z)db\right|^2  \lesssim_I \frac{C_{I'}}{\varrho(\eta)T}.  
\]
Since the above inequality is true for any $T$, we must have 
\begin{equation*}
     \limsup_{p\to \infty}  \frac{1}{N(X_p, J)} \sum_{j:\nu_j\in J} \left |\langle \psi_{j,p},  {A_p}\psi_{j,p}\rangle -\frac{1}{\Vol(SX_p)}\int_{SX_p}a_p(z,\lambda_{j,p}, b)e^{\langle z,b \rangle}d\mu(z)db\right|^2 =0.
\end{equation*}
Moreover, the quantum variance is non-negative and $\limsup_p$ goes to 0. So, 
\begin{equation*}
     \lim_{p\to \infty}  \frac{1}{N(X_p, J)} \sum_{j:\nu_j\in J} \left |\langle \psi_{j,p},  {A_p}\psi_{j,p}\rangle -\frac{1}{\Vol(SX_p)}\int_{SX_p}a_p(z,\lambda_{j,p}, b)e^{\langle z,b \rangle}d\mu(z)db\right|^2 =0,
\end{equation*}
which completes the proof.

\appendix

\section{Proof of Proposition \ref{prop: MS Prop 2}}\label{app: proof of prop 3.3}
Let $g_{t,\sigma}$ and $g_t^\sharp$ denote the Abel transforms of $k_{t,\sigma}$ and $k_t^\sharp$, respectively. By the relation between the Selberg transform and the Abel transform of a radial kernel, we have
\[
h_{t,\sigma}(\lambda)=\int_{\mathbb R}e^{i\lambda u}g_{t,\sigma}(u)du,
\qquad
h_t^\sharp(\lambda)=\int_{\mathbb R}e^{i\lambda u}g_t^\sharp(u)du.
\]
%where 
%\[
%g(u)=\sqrt{2}\int_{|u|}^\infty \frac{k(\rho)\sinh{\rho}}{\sqrt{\cosh{\rho}-\cosh{u}}}d\rho, \qquad \text{with } k:[0, \infty]\to \CC. 
%\]
Now,
\begin{align*}
    g_t^\sharp(u)&=\sqrt{\frac{2}{\cosh t}}\int_{|u|}^{t}\frac{\sinh r}{\sqrt{\cosh r-\cosh u}}dr\\
    &= \sqrt{\frac{2}{\cosh t}}\left( \int_{|u|}^{t-\sigma}\frac{\sinh r}{\sqrt{\cosh r-\cosh u}}dr + \int_{t-\sigma}^{t}\frac{\sinh r}{\sqrt{\cosh r-\cosh u}}dr \right)
\end{align*}
and
\begin{align*}
    g_{t,\sigma}(u)&=\sqrt{\frac{2}{\cosh t}}\int_{|u|}^{t}\chi_{t, \sigma}(r)\frac{\sinh r}{\sqrt{\cosh r-\cosh u}}dr\\
    &=\sqrt{\frac{2}{\cosh t}}\left( \int_{|u|}^{t-\sigma}\frac{\sinh r}{\sqrt{\cosh r-\cosh u}}dr + \int_{t-\sigma}^{t}\chi_{t, \sigma}(r)\frac{\sinh r}{\sqrt{\cosh r-\cosh u}}dr \right),
    %&\leq \sqrt{\frac{2}{\cosh t}}\left( \int_{|u|}^{t}\frac{\sinh r}{\sqrt{\cosh r-\cosh u}}dr + \int_{t}^{t+\sigma}\frac{\sinh r}{\sqrt{\cosh r-\cosh u}}dr \right)
\end{align*}
while noting that $g_{t, \sigma}(u)=g_t^\sharp(u)=0$ whenever $|u|>t$. If $|u|\leq t-\sigma$, we have
\[
g_{t, \sigma}(u)=g_t^\sharp(u)+\sqrt{\frac{2}{\cosh t}}\int_{t-\sigma}^{t}(\chi_{t, \sigma}(r)-1)\frac{\sinh r}{\sqrt{\cosh r-\cosh u}}dr,
\]
and if $t-\sigma<|u|<t$, we have
\[
g_{t, \sigma}(u)=g_t^\sharp(u)+\sqrt{\frac{2}{\cosh t}}\int_{|u|}^{t}(\chi_{t, \sigma}(r)-1)\frac{\sinh r}{\sqrt{\cosh r-\cosh u}}dr.
\]
Combining the two cases gives us
\[
g_{t, \sigma}(u)=g_t^\sharp(u)+\sqrt{\frac{2}{\cosh t}}\int_{\max\{|u|, t-\sigma\}}^{t}\frac{(\chi_{t, \sigma}(r)-1)\sinh r}{\sqrt{\cosh r-\cosh u}}dr,
\]
and using the fact that $g_{t,\sigma}$ is even,
\begin{align*}
    h_{t, \sigma}(\lambda)&= 2\int_0^{t}\cos{(\lambda u)}g_{t,\sigma}(u)du\\
    &= h_t^\sharp(\lambda)+2\sqrt{\frac{2}{\cosh t}}\int_0^{t}\cos{(\lambda u)}\left(\int_{\max\{u, t-\sigma\}}^{t} (\chi_{t, \sigma}(r)-1)\frac{\sinh r}{\sqrt{\cosh r-\cosh u}}dr\right)du.
\end{align*}
If we can show that $\delta h_{t,\sigma}:= h_{t, \sigma}-h_t^\sharp$ is uniformly small, then a similar estimate would also hold for $h_{t,\sigma}$.
\begin{align*}
    \delta h_{t,\sigma}(\lambda)&= 2\sqrt{\frac{2}{\cosh t}}\int_0^{t}\cos{(\lambda u)}\left(\int_{\max\{u, t-\sigma\}}^{t} \frac{(\chi_{t, \sigma}(r)-1)\sinh r}{\sqrt{\cosh r-\cosh u}}dr\right)du\\
&= 2\sqrt{\frac{2}{\cosh t}} \int_0^{t-\sigma}\int_{t-\sigma}^{t}\frac{ (\chi_{t, \sigma}(r)-1)\cos{(\lambda u)}\sinh r}{\sqrt{\cosh r-\cosh u}}dr du \\
&\qquad\qquad\qquad\qquad\qquad\qquad\qquad
+2\sqrt{\frac{2}{\cosh t}} \int_{t-\sigma}^{t}\int_u^{t}\frac{(\chi_{t, \sigma}(r)-1)\cos{(\lambda u)}\sinh r}{\sqrt{\cosh r-\cosh u}}dr du\\
&= 2\sqrt{\frac{2}{\cosh t}} \int_{t-\sigma}^{t}\int_0^{t-\sigma}\frac{ (\chi_{t, \sigma}(r)-1)\cos{(\lambda u)}\sinh r}{\sqrt{\cosh r-\cosh u}}du dr \\
&\qquad\qquad\qquad\qquad\qquad\qquad\qquad
+2\sqrt{\frac{2}{\cosh t}}  \int_{t-\sigma}^{t}\int_{t-\sigma}^{r}\frac{ (\chi_{t, \sigma}(r)-1)\cos{(\lambda u)}\sinh r}{\sqrt{\cosh r-\cosh u}}du dr\\
&= 2\sqrt{\frac{2}{\cosh t}}\int_{t-\sigma}^{t} (\chi_{t, \sigma}(r)-1)\sinh r \left(\int_0^r \frac{\cos{(\lambda u)}du}{\sqrt{\cosh r-\cosh u}}\right)dr.
\end{align*}
Since we are interested in large $t$, we can assume $t-\sigma>1$. Note that the inner integral is improper at $u=r$. Then for $r>1$, the inner integral needs to be treated separately for $u\in [0, r-1]$ and $u\in[r-1, r]$.
\begin{Lemma}\label{lem: uniform bound error term h_{t,sigma}}
    For some compact interval $I\subset (0, \infty)$, there exists constant $C_I$ (depending only on $I$) such that for every $r>1$ and $\lambda\in I$,
    \[
    \left|\int_0^r \frac{\cos{(\lambda u)}}{\sqrt{\cosh r-\cosh u}}du\right|\leq C_I e^{-r/2}.
    \]
\end{Lemma}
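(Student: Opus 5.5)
The key is an exact factorisation of the denominator that pulls out the factor $e^{-r/2}$. What remains is an integral of $\cos(\lambda u)$ against a weight that is close to $1$ on most of $[0,r]$. A crude absolute-value bound on that remainder would lose a factor $r$. We recover it using the oscillation of $\cos(\lambda u)$ and the fact that $\lambda$ stays away from $0$ on the compact interval $I\subset(0,\infty)$.

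\emph{Step 1 (factorisation).} A direct expansion gives
\[
\cosh r-\cosh u=\tfrac{e^{r}}{2}\bigl(1-e^{u-r}\bigr)\bigl(1-e^{-u-r}\bigr).
\]
Indeed, multiplying out the right-hand side gives $\tfrac{e^r}{2}-\tfrac{e^{u}}{2}-\tfrac{e^{-u}}{2}+\tfrac{e^{-r}}{2}$. Hence
\[
\int_0^r \frac{\cos(\lambda u)}{\sqrt{\cosh r-\cosh u}}\,du=\sqrt{2}\,e^{-r/2}\int_0^r \cos(\lambda u)\,F_r(u)\,du,
\qquad F_r(u):=\bigl(1-e^{u-r}\bigr)^{-1/2}\bigl(1-e^{-u-r}\bigr)^{-1/2}.
\]
It therefore suffices to bound $\bigl|\int_0^r\cos(\lambda u)F_r(u)\,du\bigr|$ by a constant $C_I$, uniformly in $r>1$ and $\lambda\in I$.

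\emph{Step 2 (the singular end $[r-1,r]$).} Here the second factor of $F_r$ is at most $(1-e^{-r})^{-1/2}\le (1-e^{-1})^{-1/2}$. After substituting $v=r-u$, the first factor becomes $(1-e^{-v})^{-1/2}\lesssim v^{-1/2}$ for $v\in(0,1]$. So the absolute-value bound gives $\int_{r-1}^r|F_r|\,du\lesssim\int_0^1 v^{-1/2}\,dv$, which is bounded independently of $r$ and $\lambda$.

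\emph{Step 3 (the bulk $[0,r-1]$, the main point).} On $[0,r-1]$ the function $F_r$ is smooth and bounded by an absolute constant, since $e^{u-r}\le e^{-1}$. Integrating by parts, with $\sin(\lambda u)/\lambda$ as the antiderivative of $\cos(\lambda u)$, gives
\[
\int_0^{r-1}\cos(\lambda u)F_r(u)\,du=\Bigl[\tfrac{\sin(\lambda u)}{\lambda}F_r(u)\Bigr]_0^{r-1}-\frac1\lambda\int_0^{r-1}\sin(\lambda u)F_r'(u)\,du .
\]
The boundary term is $O(1/\min I)$. For the remaining integral, it suffices to show that $\int_0^{r-1}|F_r'|\,du$ is bounded uniformly in $r$. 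We write $F_r=f_1f_2$ with $f_1=(1-e^{u-r})^{-1/2}$ and $f_2=(1-e^{-u-r})^{-1/2}$.
\begin{itemize}
\item $f_1$ is increasing in $u$, so $\int_0^{r-1}|f_1'|\,du=f_1(r-1)-f_1(0)\le (1-e^{-1})^{-1/2}$.
\item $f_2$ is decreasing in $u$, so its total variation on $[0,r-1]$ is $f_2(0)-f_2(r-1)\le (1-e^{-r})^{-1/2}-1=O(e^{-r})$.
\end{itemize}
Both factors are bounded on $[0,r-1]$, so the product rule gives $\int_0^{r-1}|F_r'|\,du\lesssim 1$. The bulk contribution is therefore $O(1/\min I)$.

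Combining Steps 1--3 gives the stated bound with $C_I\simeq 1+1/\min I$. The only real obstacle is Step 3, that is, avoiding the loss of a factor $r$ on the long interval. The monotonicity of the two factors of $F_r$ makes its total variation bounded, and that is exactly what the integration by parts needs. This is also the only place where we use that $I$ is bounded away from $0$.
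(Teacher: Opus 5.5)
Your proof is correct and follows the paper's argument step for step: the same factorisation $\cosh r-\cosh u=\tfrac{e^r}{2}(1-e^{-(r-u)})(1-e^{-(r+u)})$, the same split at $u=r-1$ with a $v^{-1/2}$ bound near the singular endpoint, and the same integration by parts against $\sin(\lambda u)/\lambda$ on $[0,r-1]$. The only difference is that you bound $\int_0^{r-1}|F_r'|\,du$ through the monotonicity (total variation) of the two factors, whereas the paper computes the derivative and uses $|\rho'(u)|\lesssim e^{-(r-u)}+e^{-(r+u)}$; the two arguments are equivalent, and yours is slightly cleaner.
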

\begin{proof}
   Note that
\begin{align*}
    \int_0^r \frac{\cos{(\lambda u)}du}{\sqrt{\cosh r-\cosh u}}&= \int_0^r \frac{\cos{(\lambda u)}du}{\sqrt{2\sinh{\frac{r+u}{2}}\sinh{\frac{r-u}{2}}}}=\int_0^r \frac{\sqrt{2}\cos{(\lambda u)}du}{\sqrt{e^r(1-e^{-(r+u)})(1-e^{-(r-u)})}}\\
    &=\sqrt{2}e^{-r/2}\int_0^r\cos{(\lambda u)}(1-e^{-(r+u)})^{-1/2}(1-e^{-(r-u)})^{-1/2}du\\
    &=\sqrt{2}e^{-r/2}\left(\int_0^{r-1}\cos{(\lambda u)}\rho(u)du+ \int_{r-1}^r\cos{(\lambda u)}\rho(u)du\right),
\end{align*}
where $\rho(u):= (1-e^{-(r+u)})^{-1/2}(1-e^{-(r-u)})^{-1/2}$.

For the second integral, since $u\in [r-1, r]$, we have $r-u\leq 1$ and $r+u > r> 1$. Therefore,
\begin{align*}
    \left|\int_{r-1}^r \cos{(\lambda u)}\rho(u)du\right|&\leq (1-e^{-1})^{-1/2}\int_{r-1}^r \left(1-e^{-(r-u)}\right)^{-1/2}du\\
    &= C\int_0^1 \left(1-e^{-x}\right)^{-1/2} dx= C\lim_{\epsilon\to 0^-}\int_\epsilon^1 (1-e^{-x})^{-1/2}dx \\
    &\leq C\lim_{\epsilon\to 0^-}\int_\epsilon^1 \frac{dx}{\sqrt{x}}=2C.
\end{align*}

For the first integral, 
\begin{align*}
  \left| \int_0^{r-1}\cos{(\lambda u)}\rho(u)du \right| &=\left| \frac{1}{\lambda}\int_0^{r-1}\frac{d}{du}(\sin{(\lambda u)})\rho(u)du\right|\\
   &= \left|\frac{1}{\lambda}\int_0^{r-1}\sin{(\lambda u)}\rho'(u) du +\frac{1}{\lambda} \sin(\lambda(r-1))\rho(r-1)\right|\\
   &\leq \frac{1}{\lambda}\int_0^{r-1}|\rho'(u)| du + \frac{1}{\lambda}|\rho(r-1)|.
\end{align*}
Since $0\leq u\leq r-1\implies r-u\geq 1$. Also, $r+u\geq r>1$. Hence, both the factors in $\rho(u)$ are uniformly bounded, i.e.,
\[
(1-e^{-(r+u)})\geq 1-e^{-1};\qquad (1-e^{-(r-u)})\geq 1-e^{-1}.
\]
So $\rho(r-1)<C$, and $\frac{1}{\lambda}$ is bounded since $I\Subset (1/4, \infty)$. Moreover
\begin{align*}
    \rho'(u)&=(1-e^{-(r+u)})^{-1/2}\frac{d}{du}(1-e^{-(r-u)})^{-1/2}+ (1-e^{-(r-u)})^{-1/2}\frac{d}{du}(1-e^{-(r+u)})^{-1/2}\\
    &=\frac{1}{2}\,e^{-(r-u)}(1-e^{-(r-u)})^{-3/2} (1-e^{-(r+u)})^{-1/2}-\frac{1}{2}\,e^{-(r+u)} (1-e^{-(r-u)})^{-1/2} (1-e^{-(r+u)})^{-3/2}.
\end{align*}
Hence $|\rho'(u)|\leq C(e^{-(r-u)}+ e^{-(r+u)})$, and 
\begin{align*}
    \int_0^{r-1} |\rho'(u)|du&\leq C\int_0^{r-1}\left(e^{-(r-u)}+ e^{-(r+u)}\right)du=C\int^r_1 e^{-x}dx + Ce^{-r}\int_0^{r-1} e^{-u} du\\
    &= C(e^{-1}-e^{-r})+ Ce^{-r}(1-e^{-(r-1)}) = C\left(e^{-1}-e^{-(2r-1)}\right)\leq Ce^{-1}.
\end{align*}
Combining these bounds, we have 
\[
\left| \int_0^{r-1}\cos{(\lambda u)}\rho(u)du \right| \leq C_I,
\]
where $C_I$ is a constant independent of $t, r, \sigma$. Combining the estimates on those two integrals completes the proof.
\end{proof}

\begin{proof}[Proof of Proposition \ref{prop: MS Prop 2}]
   Recall that
\[
\delta h_{t,\sigma}(\lambda)=  2\sqrt{\frac{2}{\cosh t}}\int_{t-\sigma}^{t} (\chi_{t, \sigma}(r)-1)\sinh r \left(\int_0^r \frac{\cos{(\lambda u)}du}{\sqrt{\cosh r-\cosh u}}\right)dr.
\]
For $t> 1+\sigma$, $r\in[t-\sigma, t]$ is $>1$ and
\[
|\delta h_{t,\sigma}(\lambda)|\leq 2C_I\sqrt{\frac{2}{\cosh t}}\int_{t-\sigma}^{t}e^{-r/2}\sinh{r} dr\leq C_I\sqrt{\frac{2}{\cosh t}}\int_{t-\sigma}^{t}e^{r/2} dr,
\]
where the final inequality uses the fact $\sinh{r}\leq e^r/2$. Moreover, since $\cosh{t}\geq e^t/2$,
\[
|\delta h_{t,\sigma}(\lambda)|\leq 2C_Ie^{-t/2}\int_{t-\sigma}^{t}e^{r/2}dr=4C_Ie^{-t/2}\left(e^{t/2}-e^{(t-\sigma)/2}\right)=4C_I(1-e^{-\sigma/2})
\]
for all $t>1+\sigma$ and any $\lambda\in I$.Since $h_{t, \sigma}(\lambda)=h_t^\sharp(\lambda)+ \delta h_{t, \sigma}(\lambda),$ for any $T\geq 1+\sigma$,
\[
\left(\frac{1}{T}\int_1^T |h_{t, \sigma}(\lambda)|^2dt\right)^{1/2} \geq \left(\frac{1}{T}\int_1^T |h_{t}^\sharp(\lambda)|^2dt\right)^{1/2}-\left(\frac{1}{T}\int_1^T |\delta h_{t, \sigma}(\lambda)|^2dt\right)^{1/2}.
\]
The uniform bound in Lemma \ref{lem: uniform bound error term h_{t,sigma}} implies 
\[
\left(\frac{1}{T}\int_1^T |\delta h_{t, \sigma}(\lambda)|^2dt\right)^{1/2} \leq 4C_I(1-e^{-\sigma/2}).
\]
Moreover, we have the identity
\[
\frac{1}{T}\int_1^T |h_t^\sharp(\lambda)|^2 dt=\frac{1}{T}\int_0^T |h_t^\sharp(\lambda)|^2dt - \frac{1}{T}\int_0^1 |h_t^\sharp(\lambda)|^2\,dt.
\]
By the sharp-kernel estimate \eqref{ineq: LeMasson-Sahlsten Prop 4.2}, for sufficiently large $T$,
\[
\frac{1}{T}\int_0^T |h_t^\sharp(\lambda)|^2\,dt \ge C^*_I, \qquad \lambda\in I.
\]
On the other hand, since $h_t^\sharp(\lambda)$ is continuous on the compact set $[0,1]\times I$, there exists a constant $M_I>0$ such that $|h_t^\sharp(\lambda)|^2\le M_I$ for all $t\in[0,1], \lambda\in I$. It follows that
\[
\frac1T\int_1^T |h_t(\lambda)|^2\,dt
\ge
C^*_I-\frac{M_I}{T}.
\]
If $T$ is large enough so that $T\geq 2M_I/C^*_I:=T_I$, then
\[
\frac{1}{T}\int_1^T |h_t(\lambda)|^2dt \geq \frac{C_I^*}{2}.
\]
Hence
\[
\left(\frac{1}{T}\int_0^T |h_{t, \sigma}(\lambda)|^2dt\right)^{1/2} \geq\left(\frac{1}{T}\int_1^T |h_{t, \sigma}(\lambda)|^2dt\right)^{1/2} \geq \sqrt{\frac{C_I^*}{2}} - 4C_I(1-e^{-\sigma/2}).
\]
Choose $\sigma_0$ small enough such that for every $\sigma<\sigma_0$, $\sqrt{C_I^*/2}> 4C_I(1-e^{-\sigma/2})$. Then 
\[
\frac{1}{T}\int_0^T |h_{t, \sigma}(\lambda)|^2dt\geq \left(\sqrt{C_I^*/2}- 4C_I(1-e^{-\sigma/2}) \right)^2:= C(I, \sigma)
\]
for all $T\geq \max\{T_I, 1+\sigma\},~ \lambda\in I,~ \sigma<\sigma_0$. 
\end{proof}

\section{Decay of the radial kernel of $\Op(\rho)$}\label{app: radial kernel decay}
\begin{proposition}\label{prop: k_rho-decay}
For $\rho\in C_c^\infty(\mathbb R^+)$, define
\[
k_\rho(t):=\int_0^\infty\rho(\lambda)\varphi_\lambda(t)\lambda\tanh(2\pi\lambda)d\lambda,\qquad t\geq 0,
\]
where $\varphi_\lambda$ denotes the spherical function on $\mathbb D$. Then for every integer $N\geq 0$, there exists a constant $C_{N, \rho}>0$ such that
\[
|k_\rho(t)|\le C_{N, \rho}\,e^{-t/2}(1+t)^{-N} \quad \text{for every } t\geq 0.
\]
\end{proposition}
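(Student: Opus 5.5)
We split into the compact range $t\in[0,1]$ and the tail $t\geq 1$. For $t\le 1$ the bound is immediate: $|\varphi_\lambda(t)|\le \varphi_0(t)\le 1$ and $\rho$ has compact support, so $|k_\rho(t)|\le \int|\rho(\lambda)|\lambda\tanh(2\pi\lambda)\,d\lambda$, and $e^{-t/2}(1+t)^{-N}$ is bounded below on $[0,1]$. The whole content is therefore the decay for $t\ge 1$, where we plan to use the Harish-Chandra expansion of the spherical function
\[
\varphi_\lambda(t)= c(\lambda)\,\Phi_\lambda(t)+c(-\lambda)\,\Phi_{-\lambda}(t),\qquad \Phi_{\lambda}(t)=e^{(i\lambda-\frac12)t}\sum_{k\ge 0}\Gamma_k(\lambda)e^{-2kt},
\]
valid for $t\geq 1$ and $\lambda$ in a neighbourhood of the compact set $\supp\rho\subset(0,\infty)$. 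Here $c$ is the Harish-Chandra $c$-function (in the normalisation where $|c(\lambda)|^{-2}$ is proportional to $\lambda\tanh(2\pi\lambda)$), and the coefficients $\Gamma_k$ are given by the standard recursion. Since $\supp\rho$ stays away from $0$, all poles of $c(\pm\lambda)$ and $\Gamma_k(\pm\lambda)$ avoid it. Moreover, on $\supp\rho$ the series converges uniformly together with all its $\lambda$-derivatives, with $|\partial_\lambda^j\Gamma_k(\lambda)|\lesssim_j (1+k)^{m_j}$ for some powers $m_j$.

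The kernel then splits as $k_\rho(t)=k^+(t)+k^-(t)+k^{\rm rem}(t)$. The terms $k^{\pm}$ come from the leading terms $k=0$:
\[
k^{\pm}(t)=e^{-t/2}\int_0^\infty \rho(\lambda)\,c(\pm\lambda)\,\lambda\tanh(2\pi\lambda)\,e^{\pm i\lambda t}\,d\lambda .
\]
The function $\lambda\mapsto\rho(\lambda)c(\pm\lambda)\lambda\tanh(2\pi\lambda)$ is $C_c^\infty$ (on $\supp\rho$, $c$ is smooth). So $N$ integrations by parts in $\lambda$, each producing a factor $1/(\pm it)$, with no boundary terms, give $|k^{\pm}(t)|\le C^{\pm}_{N,\rho}\,e^{-t/2}t^{-N}$. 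The remainder $k^{\rm rem}$ collects the terms $k\geq 1$. Each such term carries an extra factor $e^{-2kt}$, so the same $N$ integrations by parts applied termwise give a bound
\[
\sum_{k\ge1}e^{-t/2}e^{-2kt}\,t^{-N}\,C(1+k)^{m_N}\le C'_{\rho}\,e^{-t/2}e^{-2t}t^{-N}.
\]
This is even better than required. Combining the two ranges gives $|k_\rho(t)|\le C_{N,\rho}e^{-t/2}(1+t)^{-N}$ with $C_{N,\rho}$ depending on finitely many $C^N$ norms of $\rho$ on its support (in particular linearly in $\|\rho\|_{C^N}$, which is used later with $N=0$).

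The main obstacle is justifying the termwise differentiation and integration of the Harish-Chandra series. This needs uniform bounds on $\partial_\lambda^j\Gamma_k$ over $\supp\rho$. We plan to obtain these from the recursion $\Gamma_k(\lambda)\,[k(k-i\lambda)]\cdot\mathrm{const}=\sum_{l<k}(\cdots)\Gamma_l$, using Cauchy estimates on a small complex neighbourhood of $\supp\rho$ to control derivatives by sup bounds.

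If this becomes heavy, a fallback is the alternative formula through the Abel transform. Since $\rho$ is even, $k_\rho$ is the inverse Abel transform of $\hat\rho$, namely $k_\rho(t)=-\frac{1}{\pi\sqrt2}\int_t^\infty \frac{g'(u)}{\sqrt{\cosh u-\cosh t}}du$ with $g$ the Fourier transform of $\rho$, so $g$ is Schwartz. Then $e^{t/2}|k_\rho(t)|\lesssim\int_t^\infty |g'(u)|\,e^{(t-u)/2}(1-e^{t-u})^{-1/2}\cdots du$, and the Schwartz decay of $g'$ yields $(1+t)^{-N}$ directly.
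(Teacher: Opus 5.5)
Your proposal is correct and follows essentially the same route as the paper's proof: the Harish-Chandra expansion of $\varphi_\lambda$, $N$ integrations by parts in $\lambda$ on the two leading $l=0$ terms (no boundary terms since $\operatorname{supp}\rho$ avoids $0$), a separate bound for the $l\geq 1$ remainder, and a trivial bound on $[0,1]$. The one difference is that the paper does not integrate by parts in the remainder. It uses only the sup bound $|\Gamma_l(\lambda)|\le d_1(1+l^{d_2})$ to get $|T_\lambda(t)|\lesssim e^{-5t/2}$, and for $t\ge 1$ the factor $e^{-2t}$ already absorbs any $t^{-N}$. So the derivative bounds on $\Gamma_k$ that you flag as the main obstacle are not needed.
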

\begin{proof}
Recall that given $\rho\in C_c^\infty(\RR^+)$, $\Op(\rho)$ has the kernel
\[
K_\rho(z, w)=k_\rho(d(z, w))=\int_0^\infty \rho(\lambda)\varphi_\lambda(d(z,w))\lambda \tanh{(2\pi\lambda)}d\lambda,
\]
where $\varphi_\lambda(d(z, w))=\frac{1}{2\pi}\int_B e^{(\frac{1}{2}+i\lambda)\langle z,b\rangle}e^{(\frac{1}{2}-i\lambda)\langle w,b\rangle}$. Calling $t=d(z,w)$, the spherical function $\varphi_\lambda(t)$ can be expressed as 
\[
\varphi_\lambda(t)=c(\lambda)e^{(-\frac{1}{2}+i\lambda)t}\sum_{l=0}^\infty\Gamma_l(\lambda)e^{-2lt}+ c(-\lambda)e^{(-\frac{1}{2}-i\lambda)t}\sum_{l=0}^\infty\Gamma_l(-\lambda)e^{-2lt},
\]
with $\Gamma_0(\lambda)=1$, $c(\lambda)=\frac{\Gamma(i\lambda)}{\pi^{1/2}\Gamma(\frac{1}{2}+i\lambda)}$, and there exist constants $d_1, d_2$ such that $|\Gamma_l(\lambda)|\leq d_1(1+l^{d_2})$ for $l>0$. See Chapter 5, Theorem 3.1.4 of \cite{bray1994fourier} for a more detailed description of the coefficients.
Then 
\[
\varphi_\lambda(t)=\left(c(\lambda)e^{(-\frac{1}{2}+i\lambda)t}+ c(-\lambda)e^{(-\frac{1}{2}-i\lambda)t}\right) + T_\lambda(t), 
\]
where $T_\lambda(t)$ is the remainder of the sum on $l\geq 1$. Replacing $\varphi_\lambda$ in the above formulation of $K_\rho$, we have
\begin{multline*}
k_\rho(t)= e^{-\frac{t}{2}}\int_0^\infty \rho(\lambda) c(\lambda)e^{i\lambda t}\lambda \tanh{(2\pi\lambda)}d\lambda+ e^{-\frac{t}{2}}\int_0^\infty \rho(\lambda) c(-\lambda)e^{-i\lambda t}\lambda \tanh{(2\pi\lambda)}d\lambda \\
+\int_0^\infty \rho(\lambda) T_\lambda(t)\lambda \tanh{(2\pi\lambda)}d\lambda.
\end{multline*}
We now use integration by parts $N$-times on the first integral.
\begin{align*}
e^{-\frac{t}{2}}\int_0^\infty (\rho(\lambda) c(\lambda) \lambda \tanh{(2\pi\lambda)})e^{i\lambda t}d\lambda&=\frac{e^{-t/2}}{(it)^N} \int_0^L (\rho(\lambda) c(\lambda) \lambda \tanh{(2\pi\lambda)})(it)^Ne^{i\lambda t}d\lambda\\
&=\frac{e^{-t/2}}{(it)^N} \int_0^L \pa_\lambda^N(\rho(\lambda) c(\lambda) \lambda \tanh{(2\pi\lambda)})e^{i\lambda t}d\lambda,
\end{align*}
where $\supp(\rho)\subset [0, L]$, and the boundary term after integration by parts vanishes since $\rho$ is compactly supported and $c(\lambda)$ is smooth when $\lambda\in \RR^+$. Then 
\begin{equation}\label{ineq: uniform C_0,I}
    \left|e^{-\frac{t}{2}}\int_0^\infty (\rho(\lambda) c(\lambda) \lambda \tanh{(2\pi\lambda)})e^{i\lambda t}d\lambda\right|\leq \frac{e^{-t/2}}{t^N}C^{+}_{N, \rho}L
\end{equation}
A similar estimate can be acquired for the second term, 
\begin{equation}\label{ineq: uniform C_0-I}
  \left|e^{-\frac{t}{2}}\int_0^\infty (\rho(\lambda) c(-\lambda) \lambda \tanh{(2\pi\lambda)})e^{-i\lambda t}d\lambda\right|\leq \frac{e^{-t/2}}{t^N}C^{-}_{N, \rho}L
\end{equation}
and taking $C'_{N, \rho}=\min\{C_{N, \rho}^+, C_{N, \rho}^-\}$ we have for every $N$
\begin{equation*}
    |k_\rho(t)|\leq \frac{e^{-t/2}}{t^N}C'_{N, \rho}L+ \left|\int_0^\infty  \rho(\lambda) T_\lambda(t)\lambda \tanh{(2\pi\lambda)}d\lambda\right|.
\end{equation*}
For the final remainder term containing $T_\lambda(t)$, note that $|\Gamma_l(\lambda)|\leq d_1(1+l^{d_2})$ for any $l>0$. Hence 
\[
\sum_{l=1}^\infty |\Gamma_l(\lambda)|e^{-2lt}\leq\sum_{l=1}^\infty d_1(1+l^{d_2})e^{-2lt}= e^{-2t}\sum_{l=1}^\infty d_1(1+l^{d_2})e^{-2(l-1)t}\leq C_{d_1,d_2}e^{-2t},
\]
where the penultimate series converges because exponential decay beats any polynomial growth. Moreover $c(\lambda)$ is compact on $\supp(\rho)$ and let $|c(\pm \lambda)|\leq C_\rho$. Then $|T_\lambda(t)|\leq C_\rho C_{d_1, d_2} e^{-t/2} e^{-2t}$ on $\supp(\rho)$ and
\begin{equation}\label{ineq: uniform C_0I}
    \left|\int_0^\infty  \rho(\lambda) T_\lambda(t)\lambda \tanh{(2\pi\lambda)}d\lambda\right|\leq \int_0^L \|\rho\|_\infty C'_\rho e^{-5t/2}\lambda\tanh{(2\pi\lambda)d\lambda} \leq C''_{N, \rho} e^{-5t/2}.
\end{equation}
For large $t$, in particular $t\geq 1$, we have that $e^{-2t}t^N\leq C_N$ is bounded since exponential decay beats polynomial decay which implies $e^{-5t/2}\leq C_Ne^{-t/2}t^{-N}$, and considering $C_{N, \rho}=\min\{C'_{N, \rho}, C''_{N, \rho}\}$, we have 
\[
|k_\rho(t)|\leq C_{N, \rho}e^{-t/2}t^{-N}\quad \text{for every } t\geq 1.
\]
Moreover, for $t\geq 1$, $(2t)^{-N}\leq (1+t)^{-N}$ and 
\[
|k_\rho(t)|\leq C_{N, \rho}e^{-t/2}t^{-N}\leq C_{N, \rho}2^N e^{-t/2}(1+t)^{-N}:= C_{N, \rho} e^{-t/2}(1+t)^{-N}\quad \text{for every } t\geq 1.
\]

For $0\leq t\leq 1$, $\rho\in C_c^\infty(\RR^+)$ and $\varphi_\lambda(t)$ is uniformly bounded in $t$ from the fact that $\varphi_\lambda(t)\leq C_0 e^{-t/2}(1+t)$ (see Equation (3.1.5) from Chapter 5 of \cite{bray1994fourier}). So, 
\[
\sup_{t\in[0,1]} |k_\rho(t)|=:C'_\rho<\infty.
\]
Also, for $0\leq t\leq 1$, $e^{-t/2}(1+t)^{-N}\geq e^{-t/2}2^{-N}\geq e^{-1/2}2^{-N}$ and 
\[
|k_\rho(t)|\leq C_\rho'\leq C_\rho' 2^N e^{1/2}e^{-t/2}(1+t)^{-N}:=C_{N, \rho}e^{-t/2}(1+t)^{-N} \quad \text{for every } 0\leq t\leq 1,
\]
%So, only the first term in estimate of $|k_\rho(t)|$ survives for large $t$ %and assuming $t\geq 1$, we have $1+t\le 2t$, hence $t^{-M}\leq 2^M(1+t)^{-M}$ 
which completes the proof.

\end{proof}

\subsection*{Acknowledgement} The first and second named authors would like to thank Institut de Recherche Math\'{e}matique Avanc\'{e}e (IRMA), France, for providing ideal working conditions. Both authors would like to thank the European Union and the ERC (grant Acronym = InSpeGMos, Number = 101096550) for funding the work.

\bibliographystyle{amsalpha}
\bibliography{references}

\end{document}